\theoremstyle{plain}
\newtheorem{proposition}{Proposition}[section]
\newtheorem{theorem}[proposition]{Theorem}
\newtheorem{corollary}[proposition]{Corollary}
\newtheorem{lemma}[proposition]{Lemma}
\theoremstyle{definition}
\newtheorem{definition}[proposition]{Definition}
\newtheorem{remark}[proposition]{Remark}
\newcommand{\ip}[2]{\langle #1,#2 \rangle}
\newcommand{\mc}{\mathcal}
\newcommand{\mf}{\mathfrak}
\newcommand{\G}{{\mathbb G}}
\newcommand{\C}{{\mathbb C}}
\newcommand{\vnten}{\overline\otimes}
\newcommand{\proten}{\widehat\otimes}
\newcommand{\op}{{\operatorname{op}}}
\newcommand{\Image}{{\operatorname{Im}}}
\begin{document}

\title{Completely bounded representations of convolution algebras
of locally compact quantum groups}
\author{Michael Brannan, Matthew Daws, Ebrahim Samei}
\maketitle

\begin{abstract}
Given a locally compact quantum group $\G$, we study the structure of 
completely bounded homomorphisms $\pi:L^1(\mathbb G)\rightarrow\mathcal B(H)$, and the question of when they are similar to $\ast$-homomorphisms. By analogy with the cocommutative case
(representations of the Fourier algebra $A(G)$), we are led to consider the
associated map $\pi^*:L^1_\sharp(\mathbb G) \rightarrow \mathcal B(H)$ given by $\pi^*(\omega) = \pi(\omega^\sharp)^*$.  We show that the corepresentation $V_\pi$ of $L^\infty(\mathbb G)$ associated to $\pi$ is invertible if and only if both $\pi$ and $\pi^*$ are completely bounded.  Moreover, we show that the co-efficient operators of such representations give rise to completely bounded multipliers of the dual convolution algebra $L^1(\hat \G)$.  An application of these results is that any (co)isometric corepresentation is automatically unitary. 
An averaging argument then shows that when $\mathbb G$ is amenable,
$\pi$ is similar to a $*$-homomorphism if and only if $\pi^*$ is completely bounded.  For compact Kac algebras, and for certain cases of $A(G)$, we show that any completely bounded homomorphism $\pi$ is similar to a $*$-homomorphism, without further assumption on $\pi^*$.  Using free product techniques, we construct new examples of compact quantum groups $\G$ such that $L^1(\G)$ admits bounded, but not completely bounded, representations.  

\noindent\emph{Keywords:} Locally compact quantum group, Fourier algebra, completely bounded homomorphism, multipliers, corepresentation, amenability, free product.

\noindent\emph{MSC classification (2010):} 20G42, 22D12, 22D15, 43A30,
   46L07, 46L51, 46L89, 81R50.
\end{abstract}

\section{Introduction} \label{sec:intro}

Given a locally compact group $G$, a \emph{uniformly bounded representation} of
$G$ on a Hilbert space $H$ is a weakly continuous homomorphism $\pi_0$ from $G$
into the invertible group of the algebra $\mc B(H)$ of bounded operators
on $H$, with $\|\pi_0\|:=\sup_{s\in G} \|\pi_0(s)\| < \infty$.  The study of
uniformly bounded representations of locally compact groups has been central
to the development of harmonic analysis and to understanding
the structure of operator algebras associated to locally compact groups
(see \cite{coha, dh, ps} for example).

A uniformly bounded representation $\pi_0:G \to \mc B(H)$ is called
unitarisable if there exists an invertible $T \in \mc B(H)$ such that
$T\pi_0(\cdot)T^{-1}$ is a unitary representation.  $G$ is called
\emph{unitarisable} if every uniformly bounded representation of
$G$ is unitarisable. In \cite{day,dix}, Day and Dixmier independently showed that
if $G$ is amenable, then $G$ unitarisable.  Perhaps one of the most celebrated
open problems concerning uniformly bounded representations is \emph{Dixmier's
similarity problem}, which asks the converse:  Is every unitarisable locally
compact group amenable?  For recent progress on this problem, see for
example \cite{mo, pis}.

In this paper we initiate the study of ``uniformly bounded
representations'' of locally compact quantum groups, and generalise
Dixmier's similarity problem to this context.
To motivate the objects of study in this paper, recall that for a locally compact
group $G$, there is a bijective correspondence between non-degenerate bounded representations
$\pi:L^1(G) \to \mc B(H)$, of the Banach $\ast$-algebra $L^1(G)$, and uniformly
bounded representations $\pi_0:G \to \mc B(H)$.  The correspondence between
$\pi$ and $\pi_0$ is given in the usual way by integration:
\begin{align*}
\big( \pi(\omega) \alpha | \beta \big)
= \int_{G} \omega(s)\big( \pi_0(s) \alpha | \beta \big) \ ds
\qquad (\omega \in L^1(G), \alpha, \beta \in H).
\end{align*}
Note that $\|\pi\| = \|\pi_0\|$.  Further, as $L^1(G)=L^\infty(G)_*$ is a
maximal operator space, $\pi$ is automatically completely bounded with
$\|\pi\|_{\mc {CB}(L^1(G),\mc B(H))} = \|\pi\|$.  Moreover, from the
above correspondence, it is easy to see that a non-degenerate representation
$\pi:L^1(G) \to \mc B(H)$ is (similar to) a $\ast$-representation of $L^1(G)$
if and only if $\pi_0$ is (similar to) a unitary representation.  

Thus, if $\G$ is a locally compact quantum group with von Neumann algebra
$L^\infty(\G)$ and convolution algebra $L^1(\G)$, we are led to consider
(completely) bounded representations $\pi:L^1(\G) \to \mc B(H)$. A representation
$\pi:L^1(\G) \to \mc B(H)$ is called a \emph{$\ast$-representation} if the
restriction of $\pi$ to the canonical dense $\ast$-subalgebra
$L^1_\sharp(\G) \subseteq L^1(\G)$ (with involution $\omega \mapsto \omega^\sharp$),
is a $\ast$-representation.  This is equivalent to the fact that
$\pi|_{L^1_\sharp(\G)} = \pi^*$, where $\pi^*:L^1_\sharp(\G) \to \mc B(H)$
is the representation obtained from $\pi$ via the formula
$\pi^*(\omega) = \pi(\omega^\sharp)^*$.  As usual, we say $\pi$ is
\emph{similar to a $\ast$-representation} if $T\pi(\cdot)T^{-1}$ is a
$\ast$-representation in the above sense, for some invertible $T \in \mc B(H)$.

A necessary condition for a representation $\pi:L^1(\G) \to \mc B(H)$ to be
similar to a $\ast$-representation is that both $\pi$ and $\pi^*$ extend to
\emph{completely bounded} maps from $L^1(\G)$ to
$\mc B(H)$ (see Proposition~\ref{prop:three}).  We therefore begin our investigation by focusing on properties of representations $\pi:L^1(\G) \to \mc B(H)$ where both $\pi$ and $\pi^*$ are completely bounded. 

Completely bounded representations $\pi:L^1(\G)\rightarrow\mc B(H)$
correspond to corepresentations $V_\pi\in L^\infty(\G)\vnten\mc B(H)$, that is,
operators with $(\Delta\otimes\iota)(V_\pi) = V_{\pi,13}V_{\pi,23}$.  We show
in Theorem~\ref{thm:inv_case} that when $V_\pi$ is invertible in
$L^\infty(\G)\vnten\mc B(H)$, then $\pi^*$ is automatically completely bounded.
By using the duality theory for locally compact quantum groups, and
the theory of completely bounded multipliers (\cite{dm,dm2,jnr}) we show in
Theorem~\ref{thm:one} that the converse is true-- if $\pi$ and $\pi^*$
are completely bounded, then $V_\pi$ is invertible (with an appropriate
modification when $\pi$ is a degenerate homomorphism).  Furthermore, invertible
corepresentations share many of the nice properties of unitary corepresentations--
they interact well with the unbounded antipode, and are ``strongly
continuous'', that is, $V_\pi \in M(C_0(\G) \otimes \mc B_0(H))$, see
Theorem~\ref{thm:summ1}.
We also show that the co-efficient operators of such
a representation $\pi$ naturally induce completely bounded multipliers on
the dual convolution algebra $L^1(\hat\G)$ (Corollary~\ref{cor:cbmults}).  This
generalises a classical result of de Canniere and Haagerup \cite[Theorem 2.2]{dh},
showing that coefficient functions of uniformly bounded representations of a group
$G$ are always completely bounded multipliers of the Fourier algebra $A(G)$.
We believe that invertible
corepresentations, equivalently, $\pi$ such that $\pi^*$ is also
completely bounded, should be thought of as the correct quantum generalisation
of a uniformly bounded group representation.

An interesting corollary of these results is that a corepresentation $U$
which is not necessarily unitary, but is at least a partial isometry, and
is either injective or surjective, must automatically be unitary.  This
follows since if $\pi$ is the associated homomorphism, then the condition on $U$
is enough to show that $\pi^*$ is completely bounded, and then $U$ must be
invertible.  An application of this is an improvement upon a construction of Kustermans:
the ``induced corepresentations'' constructed in \cite{kus2} are always
unitary, without any ``integrability'' condition; see Section~\ref{sec:coisocase}.

Another application of our results is to the similarity problem for amenable locally compact quantum groups.  When $\G$ is amenable, Theorem~\ref{thm:one} combined with some averaging techniques with invariant means shows that a representation $\pi:L^1(\G) \to \mc B(H)$ is similar to a $\ast$-representation if and only if both $\pi$ and $\pi^*$ are completely bounded maps (Theorem~\ref{thm:main}).  This generalises the results of Day and Dixmier on the unitarisability of amenable groups, as well as some known results for compact quantum groups.  When $\G$ is a compact Kac algebra (and for certain cases of $A(G)$) we are able to improve these results and show that every completely bounded representation $\pi:L^1(\G) \to \mc B(H)$ is similar to a $\ast$-representation, without assumption on $\pi^*$ (see Theorems~\ref{thm:Kac}, \ref{T:cc rep-amen group}, and \ref{T:cb rep-amen open SIN sub}).

In \cite{cs}, the first examples of bounded, but not completely bounded, representations of Fourier algebras were constructed.  Using non-commutative Khintchine inequalities from free probability theory, we show that similar constructions hold for certain free products of compact quantum groups; such representations cannot, of course, be similar to $*$-representations.  Moreover, for the representations constructed here, we show that there are co-efficient operators which fail to induce bounded multipliers of $L^1(\hat\G)$ (Corollary \ref{cor:freeproduct}).  This provides further evidence that \emph{completely} bounded maps are indeed the ``correct'' maps to study.

The paper is organised as follows.  In Section~\ref{sec:lcqg},
we review some basic facts about locally compact quantum groups which will be
needed in this paper, fixing some notation along the way.  We also make some
remarks on the extended Haagerup tensor product.  In Section~\ref{sec:cbreps}
we show the correspondence between completely bounded representations $\pi$
on $L^1(\G)$ and corepresentations $V_\pi$, and show that if $\pi$ is similar
to a $*$-representation, or $V_\pi$ is invertible, then $\pi^*$ is also
completely bounded.  In Section~\ref{sec:invreps} we prove the converse to
this result.  We first study the duality theory of locally compact quantum
groups, trying to find a quantum analogue of the function space $L^1(G)
\cap A(G)$.  We then show that coefficients of representations $\pi$, such
that both $\pi$ and $\pi^*$ are completely bounded, induce completely
bounded multipliers.  This result is then applied to prove that $V_\pi$ is
invertible.  In Section~\ref{sec:simmprob}, we consider the similarity
problem for locally compact quantum groups, focusing on the amenable case, and
refining our results for compact Kac algebras in Section~\ref{sec:specialcases}.
In Section~\ref{sec:importance_cb}, we construct bounded representations of
$L^1(\G)$ which are not completely bounded, where $\G$ is a certain free product of compact quantum groups.
Section~\ref{sec:fourieralgebra} deals only with representations of Fourier
algebras, and extends some of the results obtained in \cite{bs}.
Finally Appendix~\ref{sec:app1} looks at weak$^*$-closed operators, and
proves a technical result about analytic generators of $\sigma$-weakly
continuous, one-parameter groups.

Finally, a quick word on notation.  We use $\ip{\cdot}{\cdot}$ to denote the
bilinear pairing between a Banach space and its dual, and $(\cdot|\cdot)$ to
denote the inner product of a Hilbert space.  We use the symbol $\overline\otimes$
for the von Neumann algebraic tensor product; for tensor products of
Hilbert spaces, or the minimal tensor products of C$^*$-algebras,
we write simply $\otimes$.  We shall use standard results from the theory
of operator spaces, following \cite{ER} for example.

\medskip
\noindent\textbf{Acknowledgements:}
This paper was initiated at the Canadian Abstract Harmonic Analysis Symposium 2010,
and the first and second authors wish to thank the third author, Yemon Choi,
and the staff at the University of Saskatchewan for their hospitality.
We acknowledge the financial support of PIMS, NSERC and the University of
Saskatchewan.  The third author was supported by an NSERC Discovery Grant.

\section{Locally compact quantum groups} \label{sec:lcqg}

For the convenience of the reader, we give a brief overview of the theory of
locally compact quantum groups.  Our main references are \cite{kv} and \cite{kvvn}.
The first reference is a self-contained account of the C$^*$-algebraic approach
to locally compact quantum groups, and the second reference discusses the
von Neumann algebraic approach.   For other readable introductions, see
\cite{kus1} and \cite{vphd}. 

A \emph{Hopf-von Neumann algebra} is a pair $(M,\Delta)$ where $M$ is a
von Neumann algebra and $\Delta:M \to M\overline{\otimes}M$ is a unital
normal $\ast$-homomorphism which is coassociative:
$(\iota \otimes \Delta) \circ \Delta = (\Delta \otimes \iota) \circ \Delta$.
Then $\Delta_*$ induces a completely contractive Banach algebra product on the
predual $M_*$. We shall write the product in $M_*$ by juxtaposition, so
\[ \ip{x}{\omega\omega'} = \ip{\Delta(x)}{\omega \otimes \omega'}
\qquad (x \in M, \omega, \omega' \in M_*). \]

Recall the notion of a normal semi-finite faithful weight $\varphi:M^+ \to [0,\infty]$
(see \cite[Chapter VII]{tak2} for example). We let
\[ \mf n_\varphi = \{x \in M : \varphi(x^*x) < \infty \}, \ \ \
\mf m_\varphi = \operatorname{span}\{ x^*y : x, y \in \mf n_\varphi \},
\ \ \ \mf m^+_\varphi = \{x  \in M^+ : \varphi(x) < \infty \}. \]
Then $\mf m_\varphi$ is a hereditary $\ast$-subalgebra of $M$, $\mf n_\varphi$
is a left ideal, and $\mf m^+_\varphi$ is equal to $M^+ \cap \mf m_\varphi$. We
can perform the GNS construction for $\varphi$, which leads to a Hilbert space $H$,
a linear map $\Lambda: \mf n_\varphi \to H$ with dense range, and a unital normal
$\ast$-homomorphism $\pi : M \to \mc B(H)$ with $\pi(x)\Lambda(y) = \Lambda(xy)$.
In future, we shall tend to suppress $\pi$ in our notation.  Tomita-Takesaki theory
gives us the modular conjugation $J$ and the modular automorphism group
$(\sigma_t)_{t\in \mathbb R}$.  Recall that there is a (unbounded) positive
non-singular operator $\nabla$ (denoted $\Delta$ in \cite{tak2}) which induces
$(\sigma_t)$ via $\sigma_t(x) = \nabla^{it} x \nabla^{-it}$ for $x\in M$ and
$t\in\mathbb R$.  Finally, $M$ is in standard position on $H$, so in particular,
for each $\omega\in M_*$, there are $\xi,\eta\in H$ with $\omega=\omega_{\xi,\eta}$,
where $\ip{x}{\omega_{\xi,\eta}} = (x\xi|\eta)$ for $\xi,\eta\in H$.

A von Neumann algebraic \emph{locally compact quantum group} is a Hopf-von Neumann
algebra $(M,\Delta)$, together with faithful normal semifinite weights
$\varphi, \psi$ which are left and right invariant, respectively. This means that
\[ \varphi\big((\omega \otimes \iota)\Delta(x)\big) = \varphi(x) \ip{1}{\omega}, \quad
\psi\big((\iota \otimes \omega)\Delta(y)\big) = \psi(y)\ip{1}{\omega}
\qquad (\omega \in M^+_*, x \in \mf m_\varphi^+, y \in \mf m_\psi^+). \]
Using these weights, we can construct an antipode $S$, which will in general be
unbounded. Then $S$ has a decomposition $S = R\tau_{-i/2}$, where $R$ is the unitary
antipode, and $(\tau_t)_{ t \in \mathbb R}$ is the scaling group. The unitary
antipode $R$ is a normal $\ast$-antiautomorphism of $M$, and
$\Delta R = \sigma(R \otimes R)\Delta$, where
$\sigma: M\overline{\otimes} M \to M\overline{\otimes}M$ is the tensor swap map.
As $R$ is normal, it drops to an isometric linear map $R_*: M_* \to M_*$, which is
anti-multiplicative. As usual, we make the canonical choice that
$\varphi = \psi \circ R$.

Associated to $(M,\Delta)$ is a \emph{reduced} C$^\ast$-algebraic quantum group
$(A,\Delta)$. Here $A$ is a C$^*$-subalgebra of $M$, and
$\Delta: A \to M(A \otimes A)$, the multiplier algebra of $A\otimes A$.  Here we
identify $M(A \otimes A)$ with a C$^\ast$-subalgebra of $M \overline{\otimes}M$;
indeed, given a C$^*$-algebra $B\subseteq\mc B(H)$, we can always identify the
multiplier algebra $M(B)$ with $\{ x\in B'' : xb,bx\in B \ (b\in B) \}$.

There is a unitary $W$, the \emph{fundamental unitary}, acting
on $H \otimes H$ (the Hilbert space tensor product) such that
$\Delta(x) = W^*(1\otimes x)W$ for $x \in M$.  Then $W$ is multiplicative,
in the sense that $W_{12} W_{13} W_{23} = W_{23} W_{12}$.  Here, and later,
we use the standard leg number notation: $W_{12} = W\otimes I, W_{13}
= \Sigma_{23} W_{12} \Sigma_{23}$ and so forth, where
here $\Sigma: H \otimes H \to H \otimes H$ is the swap map.
The \emph{left-regular representation}
of $M_*$ is the map $\lambda : \omega \mapsto (\omega \otimes \iota )W$.  This
is an injective homomorphism, the norm closure of $\lambda(M_*)$ is a
C$^\ast$-algebra denoted by $\hat A$, and the $\sigma$-weak closure of
$\lambda(M_*)$ is a von Neumann algebra, which we denote by $\hat{M}$.  We define a
coproduct $\hat{\Delta}:\hat M \to \hat M \overline{\otimes}\hat M$ by
$\hat\Delta(x) = \hat W^*(1\otimes x) \hat W$ , where $\hat W = \Sigma W^∗\Sigma$.
Then we can find
invariant weights to turn $(\hat M , \hat \Delta)$ into a locally compact quantum
group -- the dual quantum group to $M$.  We then have the Pontryagin duality theorem
which states that $\hat{\hat{M}}= M$ canonically.
There is a nice link between the scaling group of $M$, and the modular theory of
the (left) Haar weight of $\hat M$, in that
$\tau_t(x) = \hat\nabla^{it} x \hat\nabla^{-it}$ for $x\in M, t\in\mathbb R$.

In this paper we will use the symbol $\G$ to indicate the abstract ``object'' to be
thought of as a locally compact quantum group.  Inspired by the classical examples
of locally compact groups, we write $L^\infty(\G)$ for $M$,
$C_0(\G)$ for $A$, $C^b(\G)$ for $M(A)$ (the multiplier algebra of $A$),
$L^1(\G)$ for $M_*$, and $L^2(\G)$ for $H$.  We denote the locally compact quantum
group dual to $\G$ by $\hat \G$, and use the notations $L^\infty(\hat \G) =
\hat M$, $L^1(\hat \G) = \hat M_*$, and so on.  In order to distinguish between
objects attached to $ \hat \G$ from those attached to $\G$, we decorate elements
in $L^\infty(\hat \G)$, $L^1(\hat \G)$ etc. with ``hats''. For example, we write
$\hat x \in L^\infty(\hat\G)$, and $\hat \omega \in L^1(\hat\G)$.

Since the antipode $S$ of $\G$ is unbounded in general, there is not a natural
way to turn $L^1(\G)$ into a $*$-algebra.  However, $L^1(\G)$ always contains a
dense subalgebra which has a conjugate-linear involution.  We follow \cite{kus},
see also \cite[Section~2]{kvvn}.  Define $L^1_\sharp(\G)$ to be the collection of
those $\omega\in L^1(\G)$ such that there exists $\omega^\sharp \in L^1(\G)$ with
$\ip{x}{\omega^\sharp} = \overline{ \ip{S(x)^*}{\omega} }$ for each $x\in D(S)$.
Then $L^1_\sharp(\G)$ is a dense subalgebra of $L^1(\G)$, and
$\omega\mapsto\omega^\sharp$ defines an involution on $L^1_\sharp(\G)$.  As
\cite[Proposition~3.1]{kus} shows, given $\omega\in L^1(\G)$, we have that
$\omega\in L^1_\sharp(\G)$ if and only if $\lambda(\omega)^* = \lambda(\omega')$
for some $\omega'\in L^1(\G)$.  Furthermore, the left regular representation
$\lambda$ becomes a $*$-homomorphism, when restricted to $L^1_\sharp(\G)$.

When $G$ is a locally compact group, we write $VN(G)$ for $L^\infty(\hat G)$
(the von Neumann algebra generated by the left regular representation of
$\lambda_G:G \to \mc U(L^2(G))$), and write $A(G)$ for $L^1(\hat G)$, and call
this the Fourier algebra of $G$.  The Fourier algebra was first studied as a
Banach algebra by Eymard \cite{eymard}.  The left regular representation
$\hat\lambda:A(G) \to C_0(G)$, is a completely contractive $\ast$-homomorphism,
which allows us to identify $A(G)$ with a dense $\ast$-subalgebra of $C_0(G)$,
the identification being given by
\[ \hat\lambda(\hat\omega)(t) = \ip{\lambda_G(t^{-1})}{\hat\omega}
\qquad (\hat \omega \in A(G), t \in G).\]  
See \cite{tt2} for details (note that the above identification differs
from the one considered by Eymard, where $t$ is replaced by $t^{-1}$ in
\cite{eymard}).  So as to not overburden notation, in many places 
(especially in Section~\ref{sec:fourieralgebra}) we will drop
the notation $\hat \lambda$ from above, and just view $A(G)$ as a concrete
$\ast$-subalgebra of $C_0(G)$.

\subsection{The extended Haagerup tensor product}\label{sec:hatp}

Let $M$ and $N$ be von Neumann algebras.
The \emph{extended (or weak$^*$) Haagerup tensor product} of $M$ with $N$
is the collection of $x\in M\vnten N$ such that
\[ x = \sum_{i} x_i \otimes y_i \]
with $\sigma$-weak convergence, and such that $\sum_{i} x_i x_i^*$ and
$\sum_{i} y_i^* y_i$ converge $\sigma$-weakly in $M$ and $N$ respectively.
The natural norm is then
\[ \|x\|_{eh} = \inf\Big\{ \Big\|\sum_{i} x_i x_i^*\Big\|^{1/2}
\Big\|\sum_{i} y_i^* y_i\Big\|^{1/2} :
x = \sum_{i} x_i\otimes y_i \Big\}, \]
and we write $M \otimes_{eh} N$ for the resulting normed space.
See \cite{bsm} or \cite{er2} for further details.

Suppose we are given $(x_i)_{i \in I}\subseteq M$ and
$(y_i)_{i \in I}\subseteq N$ with $\sum_{i} x_i x_i^*$ and
$\sum_{i } y_i^* y_i$ converging $\sigma$-weakly.  Let $M\subseteq\mc B(H)$
and $N\subseteq\mc B(K)$.  Then there are bounded
maps $X:H\rightarrow H\otimes\ell^2(I)$ and $Y:K\rightarrow K\otimes\ell^2(I)$
given by
\[ X(\xi) = \sum_{i } x_i^*(\xi) \otimes \delta_i, \qquad
Y(\xi) = \sum_{i} y_i(\xi) \otimes \delta_i\qquad (\xi\in L^2(\G)), \]
where $(\delta_i)_{i \in I}$ is the canonical orthonormal basis for $\ell^2(I)$.
Indeed, $X^*X = \sum_i x_i x_i^*$ and $Y^*Y = \sum_i y_i^*y_i$.  Letting
$\Sigma:K\otimes\ell^2(I) \rightarrow \ell^2(I)\otimes K$ be the swap map,
a simple calculation shows that
\[ (X^*\otimes 1)(1\otimes\Sigma)(1\otimes Y) = \sum_i x_i \otimes y_i
\in M\vnten N \subseteq\mc B(H\otimes K). \]
This argument hence shows that $\sum_i x_i\otimes y_i$ always converges
$\sigma$-weakly in $M\vnten N$, as long as $\sum_{i \in I} x_i x_i^*$ and
$\sum_{i \in I} y_i^* y_i$ converge $\sigma$-weakly.
Finally, note that if $H=K$ then
\[ \sum_i x_i y_i = X^*Y \in\mc B(H), \]
and in particular, if $M=N$, then $\sum_i x_iy_i$ converges $\sigma$-weakly in
$M$ to $X^*Y$.

\section{Completely bounded homomorphisms and corepresentations}
\label{sec:cbreps}

In this section, we investigate the correspondence between completely
bounded representations $\pi$ of $L^1(\G)$, and corepresentations of $U \in
L^\infty(\G)\vnten\mc B(H)$ (which are not assumed unitary).  We show how
various naturally defined variants of $\pi$ are associated to variants of $U$.
In particular, we show that $\pi$ is similar to a $*$-representation if
and only if $U$ is similar (in the sense of corepresentations) to a unitary.
One of our variants of $\pi$ is defined using the unbounded $*$-map on
$L^1_\sharp(\G)$.  We show that if $U$ is invertible, then all of the
variants of $\pi$ are completely bounded.  A principle result,
Theorem~\ref{thm:main2} below, is that the converse to this
result is also true; and hence we have a bijection between invertible (not
necessarily unitary) corepresentations and a certain class of completely
bounded representations.

We are interested in completely bounded homomorphisms $\pi:L^1(\G)
\rightarrow \mc B(H)$.  We have the standard identifications
(see \cite[Chapter~7]{ER}):
\[ \mc{CB}(L^1(\G),\mc B(H)) \cong \big( L^1(\G) \proten \mc T(H) \big)^*
\cong L^\infty(\G) \vnten \mc B(H). \]
Here $\proten$ denotes the operator space projective tensor product, and
$\vnten$ the von Neumann algebraic tensor product.  We use the notation
that $\pi:L^1(\G)\rightarrow\mc B(H)$ is associated with $V_\pi
\in L^\infty(\G)\vnten\mc B(H)$, the relationship is that
\[ \pi(\omega) = (\omega\otimes\iota)V_\pi \qquad (\omega\in L^1(\G)). \]
We use the standard leg numbering notation, see for example
\cite{bs2, tt}.

\begin{lemma}
Let $\pi:L^1(\G)\rightarrow\mc B(H)$ be a completely bounded map, associated
to $V_\pi\in L^\infty(\G)\vnten\mc B(H)$.  Then $\pi$ is a homomorphism if and only
if $(\Delta\otimes\iota)V_\pi = V_{\pi,13} V_{\pi,23}$.
Similarly, $\pi$ is an anti-homomorphism if and only if
$(\Delta\otimes\iota)V_\pi = V_{\pi,23} V_{\pi,13}$.
\end{lemma}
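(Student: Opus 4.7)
The plan is to convert the algebraic identity $\pi(\omega_1\omega_2)=\pi(\omega_1)\pi(\omega_2)$ into an identity in $L^\infty(\G)\vnten L^\infty(\G)\vnten\mc B(H)$, and then invoke separation of points for the predual.

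First I would compute both sides evaluated through appropriate slicing. On one hand, using the definition $\ip{x}{\omega_1\omega_2}=\ip{\Delta(x)}{\omega_1\otimes\omega_2}$ together with $\pi(\omega)=(\omega\otimes\iota)V_\pi$, we get
\[ \pi(\omega_1\omega_2) = (\omega_1\omega_2\otimes\iota)V_\pi = (\omega_1\otimes\omega_2\otimes\iota)\bigl((\Delta\otimes\iota)V_\pi\bigr). \]
On the other hand, a short calculation using leg numbering (or writing $V_\pi$ formally as $\sum_i x_i\otimes T_i$ and computing $V_{\pi,13}V_{\pi,23}=\sum_{i,j}x_i\otimes x_j\otimes T_iT_j$) gives
\[ (\omega_1\otimes\omega_2\otimes\iota)(V_{\pi,13}V_{\pi,23}) = \pi(\omega_1)\pi(\omega_2), \]
and similarly $(\omega_1\otimes\omega_2\otimes\iota)(V_{\pi,23}V_{\pi,13})=\pi(\omega_2)\pi(\omega_1)$.

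Thus $\pi$ being a homomorphism is equivalent to
\[ (\omega_1\otimes\omega_2\otimes\iota)\bigl((\Delta\otimes\iota)V_\pi - V_{\pi,13}V_{\pi,23}\bigr) = 0 \qquad (\omega_1,\omega_2\in L^1(\G)), \]
and likewise for the anti-homomorphism statement with $V_{\pi,23}V_{\pi,13}$ in place of $V_{\pi,13}V_{\pi,23}$. To conclude I would note that functionals of the form $\omega_1\otimes\omega_2\otimes\rho$ with $\omega_i\in L^1(\G)$ and $\rho\in\mc T(H)$ separate points of $L^\infty(\G)\vnten L^\infty(\G)\vnten\mc B(H)$ (since elementary tensor functionals have trivial joint annihilator in any von Neumann algebraic tensor product), which upgrades the slicewise equality to the operator equality $(\Delta\otimes\iota)V_\pi = V_{\pi,13}V_{\pi,23}$.

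There is no real obstacle here; the only thing to be a little careful about is justifying the passage from pairing against elementary tensors $\omega_1\otimes\omega_2$ to equality of elements of $L^\infty(\G)\vnten L^\infty(\G)\vnten\mc B(H)$, but this is immediate from the duality $(L^\infty(\G)\vnten L^\infty(\G)\vnten\mc B(H))_* = L^1(\G)\proten L^1(\G)\proten\mc T(H)$ together with the fact that elementary tensors span a norm-dense subspace of this projective tensor product.
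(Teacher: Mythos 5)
Your proof is correct and follows essentially the same route as the paper's: slice $(\Delta\otimes\iota)V_\pi$ and $V_{\pi,13}V_{\pi,23}$ against $\omega_1\otimes\omega_2\otimes\iota$ and compare. The only difference is that you make explicit the separation-of-points step via the duality $(L^\infty(\G)\vnten L^\infty(\G)\vnten\mc B(H))_* = L^1(\G)\proten L^1(\G)\proten\mc T(H)$, which the paper leaves implicit.
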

\begin{proof}
For $\omega_1,\omega_2\in L^1(\G)$,
\[ \pi(\omega_1\omega_2)
= \big( \Delta_*(\omega_1\otimes\omega_2) \otimes\iota\big) V_\pi
= (\omega_1\otimes\omega_2\otimes\iota) \big((\Delta\otimes\iota)V_\pi\big), \]
while
\[ \pi(\omega_1) \pi(\omega_2)
= (\omega_1\otimes\iota)V_\pi (\omega_2\otimes\iota) V_\pi
= (\omega_1\otimes\omega_2\otimes\iota) \big( V_{\pi,13} V_{\pi,23} \big), \]
and so $\pi$ is a homomorphism if and only if
$(\Delta\otimes\iota)V_\pi = V_{\pi,13} V_{\pi,23}$.  The result for
anti-homomorphisms is analogous.
\end{proof}

The condition that $(\Delta\otimes\iota)V_\pi = V_{\pi,13} V_{\pi,23}$
says that $V_\pi$ is a \emph{co-representation} of $L^\infty(\G)$.
Similarly, we say that $V_\pi$ is a \emph{co-anti-representation}
when $(\Delta\otimes\iota)V_\pi = V_{\pi,23} V_{\pi,13}$.

Following \cite{bs}, given a locally compact group $G$ and a representation
$\pi:A(G)\rightarrow\mc B(H)$, we define $\check\pi, \pi^*$ and $\tilde\pi$
by \[ \check\pi(\omega) = \pi(\check\omega), \quad
\pi^*(\omega) = \pi(\overline\omega)^*, \quad
\tilde\pi(\omega) = \check\pi(\overline\omega)^*
= \pi(\overline{\check\omega})^* \qquad (\omega\in A(G)),  \]
where $\check \omega (g) = \omega(g^{-1})$ (viewing $A(G)$ as a subalgebra of $ C_0(G)$).

For $\omega\in L^1(\G)$, define $\omega^*$ by $\ip{x}{\omega^*} =
\overline{ \ip{x^*}{\omega} }$ for $x\in L^\infty(\G)$.  As $\Delta$ is a
$*$-homomorphism, it is easy to see that $L^1(\G)\rightarrow L^1(\G); \omega
\mapsto \omega^*$ is a conjugate linear homomorphism.  Recalling the definition
of $L^1_{\sharp}(\G)$ from Section~\ref{sec:lcqg}, 
given $\pi:L^1(\G) \rightarrow\mc B(H)$, we define
\begin{align*}
\check\pi&: L^1_\sharp(\G)^* \rightarrow \mc B(H);\quad
\omega\mapsto \pi\big( (\omega^*)^\sharp \big), \\
\pi^*&: L^1_\sharp(\G) \rightarrow \mc B(H);\quad
\omega\mapsto \pi(\omega^\sharp)^*, \\
\tilde\pi&: L^1(\G)\rightarrow\mc B(H);\quad
\omega\mapsto \pi(\omega^*)^*.
\end{align*}
These are defined by analogy with the $A(G)$ case, and the following results
show that they are natural choices.

\begin{proposition}\label{prop:two}
Let $\pi:L^1(\G)\rightarrow\mc B(H)$ be a bounded map.  Then $\pi$ is
completely bounded if and only if $\tilde\pi$ is completely bounded,
and in this case, $V_\pi^* = V_{\tilde\pi}$.  Furthermore, $\check\pi$
extends to a completely bounded map defined on $L^1(\G)$ if and only if
$\pi^*$ extends to a completely bounded map defined on $L^1(\G)$, and in
this case, $V_{\check\pi}^* = V_{\pi^*}$.
\end{proposition}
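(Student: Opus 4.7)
The plan rests on a single algebraic identity: for any $V\in L^\infty(\G)\vnten\mc B(H)$ and $\mu\in L^1(\G)$,
\[ \bigl((\mu\otimes\iota)V\bigr)^* = (\mu^*\otimes\iota)V^*. \]
I would first verify this on elementary tensors $V=x\otimes T$, where both sides equal $\mu^*(x^*)\,T^*$ using the defining relation $\overline{\mu(x)}=\mu^*(x^*)$, and then extend to arbitrary $V$ by $\sigma$-weak continuity of the slice map together with a routine approximation argument (using the duality $L^\infty(\G)\vnten\mc B(H)\cong\mc{CB}(L^1(\G),\mc B(H))$, both sides define the same $\sigma$-weakly continuous linear functional on $L^1(\G)\proten\mc T(H)$).

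Granted this identity, the first claim is essentially a direct computation: for $\omega\in L^1(\G)$,
\[ \tilde\pi(\omega)=\pi(\omega^*)^* = \bigl((\omega^*\otimes\iota)V_\pi\bigr)^* = (\omega\otimes\iota)V_\pi^*, \]
using $(\omega^*)^*=\omega$. Hence, when $\pi$ is completely bounded, so is $\tilde\pi$, and $V_{\tilde\pi}=V_\pi^*$. The reverse implication is free, since one checks immediately that $\tilde{\tilde\pi}=\pi$, so $\pi\mapsto\tilde\pi$ is an involution on completely bounded maps.

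For the second claim I would reduce to the first via the observation that $\widetilde{\check\pi}$ and $\pi^*$ agree on the dense subalgebra $L^1_\sharp(\G)$. Suppose $\check\pi$ extends to a completely bounded map on $L^1(\G)$, still denoted $\check\pi$. The first part then produces a completely bounded $\widetilde{\check\pi}\colon L^1(\G)\to\mc B(H)$ with corepresentation $V_{\check\pi}^*$. For $\omega\in L^1_\sharp(\G)$ we have $\omega^*\in L^1_\sharp(\G)^*$, so the original definition of $\check\pi$ applies to give
\[ \check\pi(\omega^*) = \pi\bigl(((\omega^*)^*)^\sharp\bigr) = \pi(\omega^\sharp), \]
whence $\widetilde{\check\pi}(\omega)=\pi(\omega^\sharp)^*=\pi^*(\omega)$. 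Thus $\widetilde{\check\pi}$ is a completely bounded extension of $\pi^*$, and consequently $V_{\pi^*}=V_{\widetilde{\check\pi}}=V_{\check\pi}^*$. The converse is entirely symmetric: starting from a completely bounded extension of $\pi^*$, the first part produces $\widetilde{\pi^*}$, which one checks agrees with $\check\pi$ on $L^1_\sharp(\G)^*$.

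I do not expect a serious obstacle: the work is essentially careful book-keeping with three distinct involutions (the pointwise $*$, the Kustermans sharp on $L^1_\sharp(\G)$, and the Hilbert space adjoint). The one point requiring attention is the second claim, where one must ensure that the completely bounded extensions produced really do extend the originally given densely defined maps; this is exactly what the identification $\widetilde{\check\pi}|_{L^1_\sharp(\G)}=\pi^*$ (and its symmetric counterpart) secures.
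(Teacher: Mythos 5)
Your proposal is correct and follows essentially the same route as the paper: both rest on the identity $\bigl((\omega^*\otimes\iota)V\bigr)^*=(\omega\otimes\iota)(V^*)$, deduce the first claim directly, and obtain the second claim from the relation $\pi^*(\omega^*)=\check\pi(\omega)^*$ on the dense domain. Your write-up merely makes explicit the book-keeping (the involution $\tilde{\tilde\pi}=\pi$ and the check that $\widetilde{\check\pi}$ genuinely extends $\pi^*$) that the paper compresses into ``it follows immediately'' and ``similarly''.
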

\begin{proof}
Let $U\in L^\infty(\G)\vnten\mc B(H)$ and define $\phi:L^1(\G)\rightarrow
\mc B(H)$ by $\phi(\omega)=(\omega\otimes\iota)(U)$.  Then
\[ \phi(\omega^*)^* = \big( (\omega^*\otimes\iota)(U) \big)^*
= \big( (\omega\otimes\iota)(U^*)^* \big)^* = (\omega\otimes\iota)(U^*). \]
It follows immediately that $\pi$ is completely bounded if and only if
$\tilde\pi$ is, and that $V_\pi^* = V_{\tilde\pi}$.  As $\pi^*(\omega^*)
= \check\pi(\omega)^*$, similarly $\check\pi$ extends to completely bounded
map if and only if $\pi^*$ does, and $V_{\check\pi}^* = V_{\pi^*}$.
\end{proof}

We wish to show that $*$-representations of $L^1_\sharp(\G)$ do give rise
to completely bounded $\pi$ such that also $\pi^*$ is completely bounded.
To do this, we need to investigate how unitary, or more generally, invertible
corepresentations interact with the antipode.  

Let us recall a
characterisation of the antipode using ideas very close to the extended Haagerup
tensor product.  This is shown in \cite[Corollary~5.34]{kv} in the C$^*$-algebra
setting, but the same proofs work in the von Neumann case.  Let $x,y\in L^\infty(\G)$
be such that there are families $(x_i),(y_i)\subseteq L^\infty(\G)$ with
$\sum_i x_i x_i^* < \infty$ and $\sum y_i^*y_i<\infty$, and such that
\[ x\otimes 1 = \sum_i \Delta(x_i)(1\otimes y_i), \qquad
y\otimes 1 = \sum_i (1\otimes x_i)\Delta(y_i). \]
Then $x\in D(S)$ and $S(x)=y$.  Note that these sums converge $\sigma$-weakly,
compare Section~\ref{sec:hatp}.  For further details on characterising the
antipode in this way, see \cite[Proposition~5.6]{vv}.

\begin{theorem}\label{thm:inv_case}
Let $V_\pi \in L^\infty(\G) \vnten \mc B(H)$ be an invertible corepresentation,
associated to a completely bounded homomorphism $\pi:L^1(\G)\rightarrow\mc B(H)$.
Then $\check\pi$ is a completely bounded anti-homomorphism, and $V_\pi^{-1}
= V_{\check\pi}$.
\end{theorem}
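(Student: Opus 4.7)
Write $W_\pi := V_\pi^{-1}$. The argument splits into three parts: (i) show $W_\pi$ is a co-anti-representation, producing a completely bounded anti-homomorphism $\varphi(\omega) := (\omega \otimes \iota)(W_\pi)$ with $V_\varphi = W_\pi$; (ii) reduce the identification $\varphi = \check\pi$ on $L^1_\sharp(\G)^*$ to an antipode identity for slices of $V_\pi$; (iii) prove that identity via the extended Haagerup characterization of $S$ recalled immediately before the theorem.

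For (i), apply $\Delta \otimes \iota$ to $V_\pi W_\pi = 1\otimes 1$ and use $(\Delta\otimes\iota)(V_\pi) = V_{\pi,13}V_{\pi,23}$ to deduce $(\Delta\otimes\iota)(W_\pi) = W_{\pi,23}W_{\pi,13}$; the opening lemma then delivers the completely bounded anti-homomorphism $\varphi$. For (ii), the key claim is that for each $\rho \in \mc B(H)_*$, $x := (\iota\otimes\rho)(V_\pi) \in D(S)$ with $S(x) = (\iota\otimes\rho)(W_\pi)$. Granting this, for $\omega \in L^1_\sharp(\G)^*$ and a vector functional $\rho = \omega_{\xi,\eta}$,
\[
\rho(\check\pi(\omega)) = \ip{x}{(\omega^*)^\sharp} = \ip{S(x)}{\omega} = \rho(\varphi(\omega)),
\]
where the middle equality uses $\ip{z}{(\omega^*)^\sharp} = \ip{S(z)}{\omega}$ for $z \in D(S)$, an immediate consequence of the definitions of $\sharp$ and $*$. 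Since vector functionals separate $\mc B(H)$, this forces $\check\pi(\omega) = \varphi(\omega)$.

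For (iii), rearrange the corepresentation identity and its inverse as
\[
V_{\pi,13} = (\Delta\otimes\iota)(V_\pi)(1 \otimes W_\pi), \qquad W_{\pi,13} = (1 \otimes V_\pi)(\Delta\otimes\iota)(W_\pi).
\]
Fix extended Haagerup expansions $V_\pi = \sum_i a_i \otimes T_i$ and $W_\pi = \sum_j c_j \otimes S_j$. For $\rho = \omega_{\xi,\eta}$ and an orthonormal basis $(f_k)$ of $H$, Parseval gives $\rho(T_iS_j) = \sum_k (S_j\xi\,|\,f_k)(f_k\,|\,T_i^*\eta)$. Slicing both identities above against $\iota\otimes\iota\otimes\rho$ and collecting terms over $k$ yields
\[
x \otimes 1 = \sum_k \Delta(\alpha_k)(1 \otimes \beta_k), \qquad y \otimes 1 = \sum_k (1 \otimes \alpha_k)\Delta(\beta_k),
\]
with $y := (\iota\otimes\rho)(W_\pi)$, $\alpha_k := (\iota \otimes \omega_{f_k,\eta})(V_\pi)$, and $\beta_k := (\iota \otimes \omega_{\xi,f_k})(W_\pi)$---precisely the shape demanded by the antipode characterization.

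The main obstacle, and the only non-routine step, is verifying the summability hypotheses $\sum_k \alpha_k\alpha_k^* < \infty$ and $\sum_k \beta_k^*\beta_k < \infty$. A direct Parseval computation identifies these as
\[
\sum_k \alpha_k\alpha_k^* = (\iota \otimes \omega_{\eta,\eta})(V_\pi V_\pi^*), \qquad \sum_k \beta_k^*\beta_k = (\iota \otimes \omega_{\xi,\xi})(W_\pi^* W_\pi),
\]
both bounded elements of $L^\infty(\G)$ since $V_\pi$ and $W_\pi$ are bounded. The antipode characterization therefore applies and gives $x \in D(S)$ with $S(x) = y$, completing (iii) and hence the theorem.
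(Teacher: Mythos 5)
Your proposal is correct and follows essentially the same route as the paper: the heart of both arguments is to apply the slice--product identity $\sum_k(\iota\otimes\omega_{f_k,\eta})(S)(\iota\otimes\omega_{\xi,f_k})(T)=(\iota\otimes\omega_{\xi,\eta})(ST)$ to the corepresentation relations for $V_\pi$ and $V_\pi^{-1}$, so that the families $\alpha_k=(\iota\otimes\omega_{f_k,\eta})(V_\pi)$ and $\beta_k=(\iota\otimes\omega_{\xi,f_k})(V_\pi^{-1})$ witness the antipode characterisation $S\big((\iota\otimes\omega_{\xi,\eta})(V_\pi)\big)=(\iota\otimes\omega_{\xi,\eta})(V_\pi^{-1})$, after which the $\sharp$-computation identifies $\check\pi$ with $\omega\mapsto(\omega\otimes\iota)(V_\pi^{-1})$. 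Your summability verifications and the reduction in step (ii) are exactly right. One blemish in step (iii): you begin by ``fixing extended Haagerup expansions'' $V_\pi=\sum_i a_i\otimes T_i$ and $W_\pi=\sum_j c_j\otimes S_j$, but a general element of $L^\infty(\G)\vnten\mc B(H)$ need not lie in $L^\infty(\G)\otimes_{eh}\mc B(H)$, so such expansions are not available without further argument. Fortunately this device is also unnecessary: the two displayed identities follow directly by slicing $(\Delta\otimes\iota)(V_\pi)=V_{\pi,13}V_{\pi,23}$ and $(\Delta\otimes\iota)(V_\pi^{-1})=V^{-1}_{\pi,23}V^{-1}_{\pi,13}$ with $\iota\otimes\iota\otimes\omega_{f_k,\eta}$, respectively $\iota\otimes\iota\otimes\omega_{\xi,f_k}$, and summing via the slice--product identity applied on $L^2(\G)\otimes L^2(\G)\otimes H$ --- which is precisely what the paper does. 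Excising that sentence leaves a complete proof.
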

\begin{proof}
Let $\alpha,\beta\in H$, let $(f_i)$ be an orthonormal basis of $H$, and set
\[ x_i = (\iota\otimes \omega_{f_i,\beta})(V_\pi), \qquad
y_i = (\iota\otimes \omega_{\alpha,f_i})(V_\pi^{-1}). \]
As in Section~\ref{sec:hatp}, it is easy to see that
$\sum_i x_i x_i^* = (\iota\otimes\omega_\alpha)(V_\pi V_\pi^*)$
and $\sum_i y_i^* y_i = (\iota\otimes\omega_\beta)((V_\pi^{-1})^*V_\pi^{-1})$,
with the sums converging $\sigma$-weakly.
Now, using that $(\Delta\otimes\iota)(V_\pi) = V_{\pi,13} V_{\pi,23}$,
\begin{align*}
\sum_i \Delta(x_i)(1\otimes y_i) &=
\sum_i (\iota\otimes\iota\otimes\omega_{f_i,\beta})((\Delta\otimes\iota)(V_\pi))
   (1\otimes y_i) \\
&= \sum_i (\iota\otimes\iota\otimes\omega_{f_i,\beta})(V_{\pi,13} V_{\pi,23})
   (1\otimes (\iota\otimes\omega_{\alpha,f_i})(V_\pi^{-1})).
\end{align*}
Now, for any Hilbert space $K$, and $S,T\in\mc B(K\otimes H)$,
\[ \sum_i (\iota\otimes\omega_{f_i,\beta})(S) (\iota\otimes\omega_{\alpha,f_i})(T)
= (\iota\otimes\omega_{\alpha,\beta})(ST). \]
Thus
\[ \sum_i \Delta(x_i)(1\otimes y_i)
= (\iota\otimes\iota\otimes\omega_{\alpha,\beta})(V_{\pi,13} V_{\pi,23} V_{\pi,23}^{-1})
= (\iota\otimes\omega_{\alpha,\beta})(V_\pi) \otimes 1. \]

Similarly, as $\Delta$ is a homomorphism, $(\Delta \otimes \iota)(V_\pi^{-1}) 
= V_{\pi,23}^{-1} V_{\pi,13}^{-1}$, and so
\begin{align*} \sum_i (1\otimes x_i)\Delta(y_i)
&= \sum_i (1\otimes(\iota\otimes\omega_{f_i,\beta})(V_\pi))
   (\iota\otimes\iota\otimes\omega_{\alpha,f_i})(V_{\pi,23}^{-1} V_{\pi,13}^{-1}) \\
&= (\iota\otimes\iota\otimes\omega_{\alpha,\beta})(V_{\pi,23}V_{\pi,23}^{-1}
   V_{\pi,13}^{-1})
= (\iota\otimes\omega_{\alpha,\beta})(V_\pi^{-1}) \otimes 1.
\end{align*}
Thus $(\iota\otimes\omega_{\alpha,\beta})(V_\pi) \in D(S)$ and 
$S\big( (\iota\otimes\omega_{\alpha,\beta})(V_\pi) \big)
=  (\iota\otimes\omega_{\alpha,\beta})(V_\pi^{-1})$.

Let $\omega^*\in L^1_\sharp(\G)$, so we have that
\[ \ip{x}{(\omega^*)^\sharp} = \overline{\ip{S(x)^*}{\omega^*}}
= \ip{S(x)}{\omega} \qquad (x\in D(S)). \]
It follows that
\begin{align*} ( \check\pi(\omega) \alpha | \beta )
&= \ip{ (\iota\otimes\omega_{\alpha,\beta}(V_\pi) }{(\omega^*)^\sharp}
= \ip{ S((\iota\otimes\omega_{\alpha,\beta}(V_\pi)) }{\omega} \\
&= \ip{ (\iota\otimes\omega_{\alpha,\beta})(V_\pi^{-1}) }{\omega}
= \ip{V_\pi^{-1}}{\omega\otimes\omega_{\alpha,\beta}}. \end{align*}
As such $\omega$ are dense in $L^1(\G)$, it follows that $\check\pi$ is
completely bounded, with $V_{\check\pi} = V_\pi^{-1}$, as claimed.
\end{proof}

We now show that any $*$-homomorphism on $L^1_\sharp(\G)$ gives rise to
a completely bounded homomorphism $\pi$ such that also $\pi^*$ is
completely bounded.  We use a result of Kustermans that any $*$-representation
has a ``generator''-- that is, $V_\pi$ exists and is unitary.  The interaction
between a unitary corepresentation and the antipode (which, in some sense,
we generalised in the previous proposition) is of course well-known, see
\cite[Theorem~1.6]{woro} for example.

\begin{proposition}\label{prop:three}
Let $\pi:L^1_\sharp(\G)\rightarrow\mc B(H)$ be a homomorphism which is similar
to a $*$-homomorphism.  Then $\pi$ and $\pi^*$ extend by continuity to
completely bounded homomorphisms $L^1(\G)\rightarrow\mc B(H)$, and 
$\check\pi$ and $\tilde\pi$ extend by continuity to completely bounded
anti-homomorphisms $L^1(\G)\rightarrow\mc B(H)$.
\end{proposition}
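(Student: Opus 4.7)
The plan is to pull back Kustermans' unitary generator of any $*$-representation through the similarity transformation. Choose an invertible $T\in\mc B(H)$ such that $\sigma:=T\pi(\cdot)T^{-1}$ is a $*$-homomorphism on $L^1_\sharp(\G)$. By the cited theorem of Kustermans, $\sigma$ extends to a completely bounded $*$-homomorphism $L^1(\G)\to\mc B(H)$, and there is a unitary corepresentation $V_\sigma\in L^\infty(\G)\vnten\mc B(H)$ with $\sigma(\omega)=(\omega\otimes\iota)(V_\sigma)$ for every $\omega\in L^1(\G)$.

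The key construction is then to set $V_\pi:=(1\otimes T^{-1})V_\sigma(1\otimes T)\in L^\infty(\G)\vnten\mc B(H)$. This is manifestly invertible, with inverse $(1\otimes T^{-1})V_\sigma^*(1\otimes T)$. Since $\Delta(1)=1\otimes 1$, the slice $\Delta\otimes\iota$ sends $1\otimes T^{\pm 1}$ to $1\otimes 1\otimes T^{\pm 1}$, and inserting the identity $(1\otimes 1\otimes T)(1\otimes 1\otimes T^{-1})=1$ between the two legs of $V_{\sigma,13}V_{\sigma,23}$ yields
\[ (\Delta\otimes\iota)(V_\pi)=(1\otimes 1\otimes T^{-1})V_{\sigma,13}V_{\sigma,23}(1\otimes 1\otimes T)=V_{\pi,13}V_{\pi,23}, \]
so $V_\pi$ is an invertible corepresentation. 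The associated completely bounded map $\omega\mapsto(\omega\otimes\iota)(V_\pi)=T^{-1}\sigma(\omega)T$ is defined on all of $L^1(\G)$ and agrees with the given $\pi$ on the dense subalgebra $L^1_\sharp(\G)$; this furnishes the desired completely bounded extension of $\pi$, and by the opening lemma of this section it is automatically a homomorphism.

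With $V_\pi$ invertible, Theorem~\ref{thm:inv_case} immediately produces a completely bounded anti-homomorphism extension of $\check\pi$ with $V_{\check\pi}=V_\pi^{-1}$. Applying Proposition~\ref{prop:two} to $\pi$ and to $\check\pi$ respectively will then yield the completely bounded extensions of $\tilde\pi$ (with $V_{\tilde\pi}=V_\pi^*$) and of $\pi^*$ (with $V_{\pi^*}=V_{\check\pi}^*$). That $\pi^*$ is a homomorphism and $\tilde\pi,\check\pi$ are anti-homomorphisms follows from a direct calculation on $L^1_\sharp(\G)$ using the anti-multiplicativity of both $\sharp$ and $\omega\mapsto\omega^*$, and these algebraic properties pass to the norm-continuous extensions. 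I do not anticipate a serious obstacle; the only point requiring care is invoking Kustermans' unitary-generator theorem in the precise form needed -- namely, that a $*$-homomorphism defined \emph{a priori} only on $L^1_\sharp(\G)$ carries a unitary generator in $L^\infty(\G)\vnten\mc B(H)$.
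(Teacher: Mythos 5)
Your proposal follows essentially the same route as the paper: both rest on Kustermans' generator theorem for $*$-representations, conjugation by the similarity $T$, and then Theorem~\ref{thm:inv_case} and Proposition~\ref{prop:two} to obtain the completely bounded extensions of $\check\pi$, $\pi^*$ and $\tilde\pi$. Your explicit formula $V_\pi=(1\otimes T^{-1})V_\sigma(1\otimes T)$ and the verification of the corepresentation identity are correct, and this is a clean way to package the argument.

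There is, however, one gap. The result of Kustermans you invoke (\cite[Corollary~4.3]{kus}) applies to \emph{non-degenerate} $*$-representations of $L^1_\sharp(\G)$, and a homomorphism similar to a $*$-homomorphism need not be non-degenerate; the statement of the proposition makes no such assumption. For a degenerate $\sigma$ there is no unitary generator: the natural candidate vanishes on $L^2(\G)\otimes H_e^\perp$, where $H_e$ is the essential subspace, so your $V_\sigma$ (hence $V_\pi$) would not be invertible and Theorem~\ref{thm:inv_case} would not apply as stated. The paper patches this by using that a $*$-homomorphism decomposes $H$ orthogonally as $H_e\oplus H_e^\perp$ with $\sigma$ vanishing on the second summand; one applies Kustermans' theorem to the restriction $\sigma_e$ and observes that complete boundedness of $\sigma_e$ and $\sigma_e^*$ immediately gives that of $\sigma$ and $\sigma^*$, and hence of $\pi$ and $\pi^*$ after conjugating by $T$. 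With that amendment your argument is complete.
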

\begin{proof}
We first show this in the case when $\pi$ is a $*$-homomorphism.
Then $\pi(\omega^\sharp) = \pi(\omega)^*$ for each $\omega\in L^1_\sharp(\G)$,
and so $\pi^* = \pi$.  As $\pi$ is a $*$-homomorphism, it is necessarily
contractive (see \cite[Chapter~I, Proposition~5.2]{tak} for example) and so
extends by continuity to a homomorphism defined on all of $L^1(\G)$.

To show that $\pi$ extends to a completely bounded homomorphism, we use
a non-trivial result of Kustermans.  Suppose for the moment that $\pi:
L^1_\sharp(\G)\rightarrow\mc B(H)$ is non-degenerate.  By \cite[Corollary~4.3]{kus},
there exists a unitary $U\in L^\infty(\G) \vnten \mc B(H)$
(actually, $U\in M(C_0(\G) \otimes \mc B_0(H)) \subseteq
L^\infty(\G) \vnten \mc B(H)$)
such that $\pi(\omega) = (\omega\otimes\iota)U$ for each $\omega\in L^1_\sharp(\G)$.
It follows immediately that $\pi$ does indeed extend to a completely bounded
homomorphism from $L^1(\G)$, and that thus actually $U = V_\pi$.  As $U$ is
unitary, so invertible, the previous propositions shows that $\pi^*$ is
also completely bounded.

If $\pi$ is degenerate, then as $\pi$ is a $*$-homomorphism, we can orthogonally
decompose $H$ as $H_1 \oplus H_2$, where $\pi$ restricts to $H_1$, and $\pi(\omega)\xi=0$
for each $\xi\in H_2,\omega\in L^1_\sharp(\G)$.  Let $\pi_1$ be the restriction of
$\pi$ to $H_1$; so $\pi_1:L^1(\G)\rightarrow\mc B(H_1)$ is a completely bounded
homomorphism.  It follows immediately that the same must be true of $\pi$
and of $\pi^*$.

If $\pi$ is only similar to a $*$-representation, then there exists an invertible
$T\in\mc B(H)$ such that $\theta:L^1_\sharp(\G)\rightarrow\mc B(H),
\omega\mapsto T^{-1}\pi(\omega)T$ is a $*$-homomorphism.  Then $\theta$ extends to
a completely bounded homomorphism, and hence so also does $\pi = T\theta(\cdot)T^{-1}$.
The same result holds for $\pi^*$, as
\[ \pi^*(\omega) = \pi(\omega^\sharp)^* = (T^{-1})^* \theta^*(\omega) T^*
\qquad (\omega\in L^1_\sharp(\G)). \]

Having established that $\pi$ and $\pi^*$ extend to completely bounded maps
$L^1(\G)\rightarrow\mc B(H)$, it follows from Proposition~\ref{prop:two}
that also $\check\pi$ and $\tilde\pi$ extend to completely bounded maps
$L^1(\G)\rightarrow\mc B(H)$ which are easily seen to be anti-homomorphisms.
\end{proof}

\section{Invertible corepresentations}\label{sec:invreps}

In the previous section, we showed that a necessary condition for
$\pi:L^1(\G)\rightarrow\mc B(H)$ to be similar to a $*$-representation is
that both $\pi$ and $\pi^*$ are completely bounded.  Furthermore, if
$\pi$ is associated to a corepresentation $V_\pi$, and $V_\pi$ is
invertible, then $\pi^*$ is completely bounded.  In this section, we use the
duality theory of locally compact quantum groups to show the converse:
if $\pi$ and $\pi^*$ are completely bounded, then $V_\pi$ is invertible
(together with an appropriate interpretation of this when $\pi$ is a
degenerate homomorphism).

\subsection{Identification of ``function'' spaces}

In \cite{bs}, an important component of the argument was to use the space
$L^1(G) \cap A(G)$, identified as a function space on $G$.  For a quantum
group, we would like to be able to talk about the space
$L^1(\G) \cap L^1(\hat\G)$; to make sense of this, we shall embed dense
subspaces of $L^1(\G)$ and $L^1(\hat\G)$ into $L^2(\G)$, making use of the
fact that we always identify $L^2(\G)$ with $L^2(\hat\G)$.  This section
is slightly technical-- the key result is Proposition~\ref{prop:seven} below,
which is used in the following section.

We first introduce the non-standard, but instructive notation
\[ L^1(\G) \cap L^2(\G) = \big\{ \omega\in L^1(\G) : \exists\, \xi\in L^2(\G),
\ip{x^*}{\omega} = (\xi|\Lambda(x)) \ (x\in \mf n_\varphi) \big\}. \]
This space is denoted by $\mc I$ in \cite[Section~1.1]{kvvn} and
\cite[Notation~8.4]{kv}.  Then $\hat\varphi$
is the weight with GNS construction $(L^2(\G),\iota,\hat\Lambda)$ where
$\lambda(L^1(\G) \cap L^2(\G))$ forms a $\sigma$-strong$^*$ core for $\hat\Lambda$,
and we have that
\[ \ip{x^*}{\omega} = \big( \hat\Lambda(\lambda(\omega)) \big| \Lambda(x) \big)
\qquad (x\in \mf n_\varphi, \omega\in L^1(\G) \cap L^2(\G)). \]
Then \cite[Result~8.6]{kv} shows that $L^1(\G)\cap L^2(\G)$ is a left
ideal in $L^1(\G)$ and
\[ \hat\Lambda\big( \lambda(\omega\omega_1) \big)
= \lambda(\omega) \hat\Lambda\big( \lambda(\omega_1) \big)
\qquad (\omega\in L^1(\G), \omega_1\in L^1(\G)\cap L^2(\G)). \]
This formula is of course immediate from the fact that $\hat\Lambda$ is a GNS
map, but this reasoning is circular(!) if one is following \cite[Section~8]{kv}.
Further, \cite[Result~8.6]{kv} can easily be adapted (or just perform the obvious
calculation) to show that $L^1(\G)\cap L^2(\G)$ is a left $L^\infty(\G)$ module, and
\[ \hat\Lambda\big( \lambda(x\omega_1) \big)
= x \hat\Lambda\big( \lambda(\omega_1) \big)
\qquad (x\in L^\infty(\G), \omega_1\in L^1(\G)\cap L^2(\G)). \]

Continuing to follow \cite[Section~8]{kv}, we find that
there is a norm continuous, one-parameter group $(\rho_t)_{t\in\mathbb R}$
of isometries on $L^1(\G)$, such that $\rho_t$ is an algebra homomorphism for
each $t$, and with $\hat\sigma_t(\lambda(\omega)) = \lambda(\rho_t(\omega))$ for
$\omega\in L^1(\G)$ (where $(\hat \sigma_t)_t$ is the modular automorphism group
associated to the dual left Haar weight $\hat \varphi$).  As observed before
\cite[Proposition~2.8]{kvvn}, each $\rho_t$ maps $L^1_\sharp(\G)$ into itself, and
$\rho_t(\omega^\sharp) = \rho_t(\omega)^\sharp$ for $\omega\in L^1_\sharp(\G)$.
Finally, for $\omega\in L^1(\G)\cap L^2(\G)$, also
$\rho_t(\omega) \in L^1(\G)\cap L^2(\G)$ and
$\hat\Lambda(\lambda(\rho_t(\omega))) =
\hat\nabla^{it} \hat\Lambda(\lambda(\omega))$.

\begin{lemma}\label{lem:one}
The collection of $\omega\in L^1(\G)\cap L^2(\G)$ with $\omega\in D(\rho_i)$,
$\rho_i(\omega)\in L^1_\sharp(\G)$, and $\rho_i(\omega)^\sharp \in L^1(\G)
\cap L^2(\G)$ is dense in $L^1(\G)$.  Furthermore, the resulting collection
of vectors $\hat\Lambda(\lambda(\omega))$ is dense in $L^2(\G)$.
\end{lemma}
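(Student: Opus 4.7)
The strategy is a Gaussian smoothing argument using the norm-continuous one-parameter group $(\rho_t)_{t\in\mathbb R}$ of isometries on $L^1(\G)$. I would first introduce the auxiliary set
\[ \mc A = \bigl\{\omega \in L^1(\G)\cap L^2(\G) : \omega \in L^1_\sharp(\G) \text{ and } \omega^\sharp \in L^1(\G)\cap L^2(\G)\bigr\} \]
and verify that $\mc A$ is dense in $L^1(\G)$ and that $\hat\Lambda(\lambda(\mc A))$ is dense in $L^2(\G)$. On the dual side, $\omega\in\mc A$ corresponds via $\xi = \hat\Lambda(\lambda(\omega))$ to $\lambda(\omega) \in \mf n_{\hat\varphi}\cap\mf n_{\hat\varphi}^*$ with $\hat\Lambda(\lambda(\omega^\sharp))=\hat J\hat\nabla^{1/2}\xi$, so both density statements reduce to standard Tomita--Takesaki considerations combined with the fact (recalled in this section) that $\lambda(L^1(\G)\cap L^2(\G))$ is a $\sigma$-strong$^*$ core for $\hat\Lambda$.

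For $\omega \in \mc A$ and $n\in\mathbb N$ I then form the Gaussian average
\[ \omega_n = \sqrt{n/\pi}\int_{\mathbb R} e^{-nt^2}\rho_t(\omega)\,dt, \]
which is entire analytic for $\rho$ and satisfies $\omega_n\to\omega$ in $L^1$-norm. Since $L^1(\G)\cap L^2(\G)$ is $\rho_t$-invariant with $\hat\Lambda(\lambda(\rho_t(\omega)))=\hat\nabla^{it}\hat\Lambda(\lambda(\omega))$, carrying $\hat\Lambda\circ\lambda$ past the Bochner integral using closedness gives $\omega_n\in L^1(\G)\cap L^2(\G)$ together with $\hat\Lambda(\lambda(\omega_n))\to\hat\Lambda(\lambda(\omega))$ in $L^2(\G)$. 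A standard contour shift then yields
\[ \rho_i(\omega_n) = \sqrt{n/\pi}\int_{\mathbb R} e^{-n(t-i)^2}\rho_t(\omega)\,dt, \]
an absolutely Bochner-convergent integral of elements of $L^1(\G)\cap L^2(\G)\cap L^1_\sharp(\G)$. Combining closedness of $\sharp$, the identity $\rho_t\circ\sharp=\sharp\circ\rho_t$, and Bochner integrability of $t\mapsto e^{-n(t+i)^2}\rho_t(\omega^\sharp)$, we obtain $\rho_i(\omega_n)\in L^1_\sharp(\G)$ with
\[ \rho_i(\omega_n)^\sharp = \sqrt{n/\pi}\int_{\mathbb R} e^{-n(t+i)^2}\rho_t(\omega^\sharp)\,dt, \]
which lies in $L^1(\G)\cap L^2(\G)$ since $\omega^\sharp$ does. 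Thus every $\omega_n$ satisfies all four conditions of the lemma, and density in $L^1(\G)$ and in $L^2(\G)$ follows from the density of $\mc A$ and of $\hat\Lambda(\lambda(\mc A))$ combined with the convergences $\omega_n\to\omega$ and $\hat\Lambda(\lambda(\omega_n))\to\hat\Lambda(\lambda(\omega))$.

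The principal technical obstacle is the rigorous justification of the interchange of Bochner integrals with the closed unbounded operations $\rho_i$, $\sharp$ and $\hat\Lambda\circ\lambda$. Each interchange requires verifying that after the operation is applied the integrand is still absolutely Bochner integrable in the appropriate Banach space; this is exactly what the compatibility identities $\rho_t\circ\sharp=\sharp\circ\rho_t$ and $\hat\Lambda\circ\lambda\circ\rho_t = \hat\nabla^{it}\circ\hat\Lambda\circ\lambda$ are designed to deliver, together with the observation that the Gaussian decay survives the contour shift (since $|e^{-n(t\pm i)^2}| = e^n e^{-nt^2}$), after which closedness of the operation produces the desired commutation.
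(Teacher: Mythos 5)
Your proposal is correct and follows essentially the same route as the paper: both start from the dense set $L^1_\sharp(\G)\cap L^2_\sharp(\G)$ (whose density is quoted from Kustermans--Vaes) and smear with Gaussian averages over the one-parameter group $(\rho_t)$, checking that the smeared elements are analytic, satisfy all the required conditions, and converge back to $\omega$ in $L^1(\G)$ and to $\hat\Lambda(\lambda(\omega))$ in $L^2(\G)$. The only difference is presentational: you justify the interchanges via closedness of the unbounded operations $\rho_i$, $\sharp$ and $\hat\Lambda\circ\lambda$, whereas the paper verifies the same identities by direct dual-pairing computations against $S(x)^*$ and $x^*$.
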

\begin{proof}
We use a ``smearing argument'', compare (for example) \cite[Proposition~5.21]{kus1}.
Let $\omega\in L^1_\sharp(\G) \cap (L^1(\G)\cap L^2(\G))$.  Such $\omega$ are
dense in $L^1(\G)$ by \cite[Lemma~2.5]{kvvn}.
For $n\in\mathbb N$ and $z\in\mathbb C$ define
\[ \omega(n,z) = \frac{n}{\sqrt\pi} \int_{\mathbb R} \exp(-n^2(t+z)^2) \rho_t(\omega)
\ dt. \]
Then $\omega(n,0)$ is analytic for $\rho$ and $\rho_z(\omega(n,0)) = \omega(n,z)$.
As $\rho$ is norm continuous, it follows that $\omega(n,0)\rightarrow\omega$
in norm as $n\rightarrow\infty$.

For $x\in D(S)$, using that $\rho_t(\omega)^\sharp = \rho_t(\omega^\sharp)$,
\begin{align*} \ip{S(x)^*}{\omega(n,z)}
&= \frac{n}{\sqrt\pi} \int_{\mathbb R} \exp(-n^2(t+z)^2)
   \ip{S(x)^*}{\rho_t(\omega)} \ dt \\
&= \frac{n}{\sqrt\pi} \int_{\mathbb R} \exp(-n^2(t+z)^2)
   \overline{\ip{x}{\rho_t(\omega^\sharp)}} \ dt 
= \overline{ \ip{x}{(\omega^\sharp)(n,\overline z)} }.
\end{align*}
Hence $\omega(n,z)\in L^1_\sharp(\G)$ and $\omega(n,z)^\sharp
= (\omega^\sharp)(n,\overline z)$.

For $x\in\mf n_\varphi$,
\begin{align*} \ip{x^*}{(\omega^\sharp)(n,z)}
&= \frac{n}{\sqrt\pi}\int_{\mathbb R} \exp(-n^2(t+z)^2)
   \ip{x^*}{\rho_t(\omega^\sharp)} \ dt \\
&= \frac{n}{\sqrt\pi}\int_{\mathbb R} \exp(-n^2(t+z)^2)
   \big(\hat\Lambda(\lambda(\rho_t(\omega^\sharp)))\big|\Lambda(x)\big) \ dt
= \big( \xi \big| \Lambda(x) \big),
\end{align*}
where
\[ \xi = \frac{n}{\sqrt\pi}\int_{\mathbb R} \exp(-n^2(t+z)^2)
\hat\nabla^{it} \hat\Lambda(\lambda(\omega^\sharp)) \ dt. \]
In particular, $(\omega^\sharp)(n,z) \in L^1(\G) \cap L^2(\G)$.

It follows that $\{ \omega(n,0) : n\in\mathbb N, \omega\in L^1_\sharp(\G)\cap L^2(\G) \}$
is dense in $L^1(\G)$, analytic for $\rho$, satisfies that $\rho_z(\omega(n,0))
\in L^1_\sharp(\G)$ for all $z$, and satisfies that $\rho_z(\omega(n,0))^\sharp
\in L^1(\G)\cap L^2(\G)$.  Furthermore, a similar calculation to that above
yields that
\[ \hat\Lambda(\lambda(\omega(n,0)))
= \frac{n}{\sqrt\pi} \int_{\mathbb R} \exp(-n^2t^2) \hat\nabla^{it}
\hat\Lambda(\lambda(\omega)) \ dt. \]
This converges to $\hat\Lambda(\lambda(\omega))$ in norm, as
$n\rightarrow\infty$.  In particular, as $\omega$ varies, the
collection of vectors $\hat\Lambda(\lambda(\omega(n,0)))$ is dense in $L^2(\G)$.
\end{proof}

For the following, let $\hat T$ be the Tomita map, which is the closure of
$\hat\Lambda(\mf n_{\hat\varphi}\cap\mf n_{\hat\varphi}^*) \rightarrow L^2(\G),
\hat\Lambda(x)\mapsto \hat\Lambda(x^*)$.  For further details, see
\cite[Chapter~1]{tak2} (where this map is denoted by $S$, a notation which
we avoid, as it clashes with the antipode).

\begin{proposition}\label{prop:eight}
Let $X = \{ \omega\in L^1(\G)\cap L^2(\G) : \hat\Lambda(\lambda(\omega)) \in
D(\hat T^*) \}$.  Then $X$ is dense in $L^1(\G)$, and $\hat\Lambda(\lambda(X))$
is dense in $L^2(\G)$.  For $x\in D(S)^*$ and $\omega\in X$, we have that
$x\omega\in X$, and $\hat T^* \hat\Lambda(\lambda(x\omega))
= S(x^*) \hat T^* \hat\Lambda(\lambda(\omega))$.
\end{proposition}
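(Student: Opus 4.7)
The plan is to prove the density assertions by the smearing already used in Lemma~\ref{lem:one}, and then to derive the module identity together with the antipode formula from the interaction between Tomita theory of $\hat\varphi$ and the unitary antipode $R$.

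For density, start with $\omega \in L^1(\G)\cap L^2(\G)$ (dense in $L^1(\G)$ by \cite[Lemma~2.5]{kvvn}) and set $\omega_n = \omega(n,0) = \tfrac{n}{\sqrt\pi}\int_{\mathbb R}e^{-n^2 t^2}\rho_t(\omega)\,dt$ as in Lemma~\ref{lem:one}. Since
\[ \hat\Lambda(\lambda(\omega_n)) = \frac{n}{\sqrt\pi}\int_{\mathbb R}e^{-n^2 t^2}\hat\nabla^{it}\hat\Lambda(\lambda(\omega))\,dt \]
is an entire analytic vector for the one-parameter unitary group $(\hat\nabla^{it})_t$, it lies in $D(\hat\nabla^{-1/2}) = D(\hat T^*)$ (via the polar decomposition $\hat T^* = \hat J\hat\nabla^{-1/2}$). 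Hence $\omega_n \in X$, and the norm convergences $\omega_n \to \omega$ in $L^1(\G)$ and $\hat\Lambda(\lambda(\omega_n)) \to \hat\Lambda(\lambda(\omega))$ in $L^2(\G)$ already shown in Lemma~\ref{lem:one} yield both density statements.

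For the module/antipode assertion, first invoke the left $L^\infty(\G)$-module identity $\hat\Lambda(\lambda(x\omega)) = x\,\hat\Lambda(\lambda(\omega))$ recorded just before Lemma~\ref{lem:one}. Writing $\xi = \hat\Lambda(\lambda(\omega))$, the statement reduces to the pure Tomita--antipode identity
\[ \hat T^*(x\xi) = S(x^*)\,\hat T^*\xi, \qquad x^* \in D(S),\ \xi \in D(\hat T^*), \]
with the implicit $x\xi \in D(\hat T^*)$. The three ingredients are $\hat T^* = \hat J\hat\nabla^{-1/2}$, the relation $\tau_t(z) = \hat\nabla^{it}z\hat\nabla^{-it}$ recalled in Section~\ref{sec:lcqg}, and the Kustermans--Vaes formula $R(z) = \hat J z^*\hat J$ (see \cite{kv}), combined with $S = R\tau_{-i/2}$ and $R^2 = \iota$. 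Setting $y = x^*$, so that $y \in D(\tau_{-i/2})$ and $x = y^* \in D(\tau_{i/2})$ with
\[ \tau_{i/2}(x) = \tau_{i/2}(y^*) = \tau_{-i/2}(y)^* = R(S(y))^*, \]
the analytic continuation of $\hat\nabla^{-it}a = \tau_{-t}(a)\hat\nabla^{-it}$ to $t = -i/2$ gives $x\xi \in D(\hat\nabla^{-1/2})$ and
\[ \hat T^*(x\xi) = \hat J\hat\nabla^{-1/2}(x\xi) = \hat J R(S(y))^*\hat\nabla^{-1/2}\xi = S(y)\,\hat J\hat\nabla^{-1/2}\xi = S(x^*)\,\hat T^*\xi, \]
the middle equality being $\hat J z^*\hat J = R(z)$ applied to $z = R(S(y))$.

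The main technical obstacle is justifying the operator intertwining rigorously: one has to genuinely establish $x\xi \in D(\hat\nabla^{-1/2})$ and not merely manipulate the relation formally. This reduces to showing that $t \mapsto \hat\nabla^{-it}(x\xi) = \tau_{-t}(x)\hat\nabla^{-it}\xi$ extends to a bounded, continuous, analytic $L^2(\G)$-valued function on the closed strip $\{z \in \mathbb C : -1/2 \le \operatorname{Im}(z) \le 0\}$. Each factor extends separately (by $x \in D(\tau_{i/2})$ and $\xi \in D(\hat\nabla^{-1/2})$), and boundedness of the product on the closed strip follows from a Phragm\'en--Lindel\"of/three-lines argument using the standard norm bounds on the analytic extensions of $\tau_z(x)$. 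This is precisely the type of analytic-generator manipulation that the paper handles systematically in Appendix~\ref{sec:app1}.
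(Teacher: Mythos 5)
Your proposal is correct, and both halves land on the same underlying mechanisms as the paper's proof, but the density half takes a genuinely more direct route. The paper obtains density by feeding in the special elements of Lemma~\ref{lem:one} (those $\omega$ with $\rho_i(\omega)\in L^1_\sharp(\G)$ and $\rho_i(\omega)^\sharp\in L^1(\G)\cap L^2(\G)$) and then computing $\hat T^*\hat\Lambda(\lambda(\omega))=\hat T\hat\nabla^{-1}\hat\Lambda(\lambda(\omega))=\hat\Lambda(\lambda(\rho_i(\omega)^\sharp))$; this has the side benefit of exhibiting $\hat T^*\hat\Lambda(\lambda(\omega))$ as another vector of the form $\hat\Lambda(\lambda(\cdot))$, but it is not needed for the statement. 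You instead observe that the Gaussian smearing $\omega(n,0)$ of an arbitrary $\omega\in L^1(\G)\cap L^2(\G)$ has $\hat\Lambda(\lambda(\omega(n,0)))$ an entire analytic vector for $(\hat\nabla^{it})$, hence in $D(\hat\nabla^{-1/2})=D(\hat T^*)$ — this avoids the $\sharp$-structure entirely and is, for this proposition, the more economical argument (the computation showing $\omega(n,0)\in L^1(\G)\cap L^2(\G)$ with the stated $\hat\Lambda$-value is exactly the one already carried out in Lemma~\ref{lem:one} and uses nothing about $\sharp$). For the module identity, your reduction to $\tau_{i/2}(x)\hat\nabla^{-1/2}\subseteq\hat\nabla^{-1/2}x$ together with $R(z)=\hat Jz^*\hat J$ is exactly the paper's route (the paper phrases it as $\hat T^*x\hat T^*\subseteq S(x^*)$); you are in fact more careful than the paper on the one genuinely delicate point, namely that one must establish $x\xi\in D(\hat\nabla^{-1/2})$ via the three-lines/analytic-generator argument rather than read it off from the bare containment $\hat\nabla^{-1/2}x\hat\nabla^{1/2}\subseteq\tau_{i/2}(x)$, which by itself says nothing about which vectors lie in the domain of the left-hand side. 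So the proposal is complete modulo that standard lemma, which you correctly identify and sketch.
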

\begin{proof}
Let $\omega\in L^1(\G)\cap L^2(\G)$ be given by Lemma~\ref{lem:one}.
As $\lambda(\rho_t(\omega)) = \hat\sigma_t(\lambda(\omega))$, an analytic
continuation argument shows that $\lambda(\omega) \in D(\hat\sigma_i)$
with $\hat\sigma_i(\lambda(\omega)) = \lambda(\rho_i(\omega))$.  As also
$\rho_i(\omega)\in L^1_\sharp(\G)$ and $\rho_i(\omega)^\sharp
\in L^1(\G)\cap L^2(\G)$, we have that
\begin{align*} \hat\Lambda\big( \lambda(\rho_i(\omega)^\sharp) \big)
&= \hat\Lambda\big( \lambda(\rho_i(\omega))^* \big)
= \hat T \hat\Lambda\big( \lambda(\rho_i(\omega)) \big)
= \hat T \hat\Lambda\big( \sigma_i(\lambda(\omega)) \big) \\
&= \hat T \hat\nabla^{-1} \hat\Lambda\big( \lambda(\omega) \big)
= \hat T^* \hat\Lambda\big( \lambda(\omega) \big). \end{align*}
As such $\omega$ are dense in $L^1(\G)$, it follows that $X$ is dense.
As the collection of vectors $\hat\Lambda(\lambda(\omega))$ is dense in
$L^2(\G)$, it also follows that $\hat\Lambda(\lambda(X))$ is dense.

Now, $D(S)^* = D(\tau_{i/2})$.  For $x\in L^\infty(\G)$ and $t\in\mathbb R$,
$\tau_t$ is implemented as $\tau_t(x) = \hat\nabla^{it} x \hat\nabla^{-it}$.
Thus, that $x\in D(\tau_{i/2})$ means that $\hat\nabla^{-1/2} x \hat\nabla^{1/2}$
extends to a bounded operator, namely $\tau_{i/2}(x)$.  Thus also
$\hat J \hat\nabla^{-1/2} x \hat\nabla^{1/2}\hat J \subseteq S(x^*)$.
As $\hat T^* = \hat J \hat\nabla^{-1/2} = \hat\nabla^{1/2} \hat J$, we conclude that
$\hat T^* x \hat T^* \subseteq S(x^*)$.  Thus, for $x\in S(x)^*$ and $\omega\in X$,
\[ S(x^*) \hat T^* \hat\Lambda(\lambda(\omega)) = \hat T^* x \hat T^*\hat T^*
\hat\Lambda(\lambda(\omega))
= \hat T^* x\hat\Lambda(\lambda(\omega))
= \hat T^* \hat\Lambda(\lambda(x\omega)), \]
as required.  Here we used that $\hat T^* = (\hat T^*)^{-1}$, see
\cite[Lemma~1.5, Chapter~1]{tak2}.  
\end{proof}

Let us make one final definition:
\[ L^1_\sharp(\G) \cap L^2_\sharp(\G) =
\{  \omega\in L^1(\G)\cap L^2(\G) : \omega\in L^1_\sharp(\G)
\text{ and } \omega^\sharp \in L^1(\G)\cap L^2(\G) \}. \]
Then \cite[Proposition~2.6]{kvvn} shows that $L^1_\sharp(\G) \cap L^2_\sharp(\G)$
is dense in $L^1(\G)$, and $\hat\Lambda(\lambda(L^1_\sharp(\G) \cap
L^2_\sharp(\G)))$ is dense in $L^2(\G)$.

\begin{proposition}\label{prop:seven}
Let $\omega_1\in L^1_\sharp(\G)\cap L^2_\sharp(\G)$, and
let $\omega_2\in X$ be as in the previous proposition.  Let $x\in L^\infty(\G)$,
set $\xi = \hat\Lambda(\lambda(\omega_1))$ and
$\eta = \hat T^*\hat\Lambda(\lambda(\omega_2))$, and set
$\hat\omega = \hat\omega_{x\xi,\eta}$.  Then
$\hat\omega \in L^1(\hat\G)\cap L^2(\G)$
and $\Lambda(\hat\lambda(\hat\omega)) = \hat\Lambda(\lambda((x\omega_1)\omega_2))$.
When we take $x=1$, as $\omega_1,\omega_2$ vary, such $\hat\omega$ are dense
in $L^1(\hat\G)$.
\end{proposition}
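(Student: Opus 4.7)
The plan is to reduce the first two assertions to a single Tomita-theoretic identity that I can verify by direct computation, and then to handle density separately. By the dual of the definition of $L^1(\G)\cap L^2(\G)$ given in Section~\ref{sec:lcqg}, the conjunction $\hat\omega\in L^1(\hat\G)\cap L^2(\G)$ and $\Lambda(\hat\lambda(\hat\omega)) = \hat\Lambda(\lambda((x\omega_1)\omega_2))$ is equivalent to the identity
\[
\ip{\hat y^*}{\hat\omega} = \big(\hat\Lambda(\lambda((x\omega_1)\omega_2)) \,\big|\, \hat\Lambda(\hat y)\big) \qquad (\hat y \in \mf n_{\hat\varphi}).
\]

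Expanding the left-hand side gives $(\hat y^* x \xi \,|\, \eta)$. The left $L^\infty(\G)$-module identity $x\hat\Lambda(\lambda(\omega_1)) = \hat\Lambda(\lambda(x\omega_1))$ and the product identity $\hat\Lambda(\lambda(\omega\omega')) = \lambda(\omega)\hat\Lambda(\lambda(\omega'))$ from Section~\ref{sec:lcqg} reduce this, with $\hat b := \lambda(x\omega_1)$ and $\hat c := \lambda(\omega_2)$ (both in $\mf n_{\hat\varphi}$), to the modular identity
\[
\big(\hat\Lambda(\hat y^*\hat b) \,\big|\, \hat T^*\hat\Lambda(\hat c)\big) = \big(\hat\Lambda(\hat b\hat c) \,\big|\, \hat\Lambda(\hat y)\big).
\]
Since $\mf n_{\hat\varphi}$ is a left ideal, both $\hat y^*\hat b$ and $\hat b^*\hat y$ lie in $\mf n_{\hat\varphi}$, so $\hat y^*\hat b \in \mf n_{\hat\varphi}\cap\mf n_{\hat\varphi}^*$ and $\hat T\hat\Lambda(\hat y^*\hat b) = \hat\Lambda(\hat b^*\hat y)$. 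The anti-linear adjoint relation $(\hat T u \,|\, v) = (\hat T^* v \,|\, u)$, applied with $v = \hat\Lambda(\hat c) \in D(\hat T^*)$ (this is precisely why $\omega_2 \in X$ is required), gives $(\hat\Lambda(\hat b^*\hat y) \,|\, \hat\Lambda(\hat c)) = (\hat T^*\hat\Lambda(\hat c) \,|\, \hat\Lambda(\hat y^*\hat b)) = \overline{(\hat\Lambda(\hat y^*\hat b) \,|\, \hat T^*\hat\Lambda(\hat c))}$. Both sides then equal $\hat\varphi(\hat y^*\hat b\hat c)$, yielding the identity.

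For the density statement (case $x=1$), joint continuity of $(\xi,\eta)\mapsto\hat\omega_{\xi,\eta}$ as a bilinear map $L^2(\G)\times L^2(\G) \to L^1(\hat\G)$, combined with the fact that $L^\infty(\hat\G)$ is in standard position (so every element of $L^1(\hat\G)$ is a vector functional), reduces matters to showing that both $\hat\Lambda(\lambda(L^1_\sharp(\G)\cap L^2_\sharp(\G)))$ and $\hat T^*\hat\Lambda(\lambda(X))$ are dense in $L^2(\G)$. The former is recorded just before the proposition via \cite[Proposition~2.6]{kvvn}. For the latter, the analytic subfamily produced by Lemma~\ref{lem:one} furnishes $\hat\nabla$-analytic elements in $\hat\Lambda(\lambda(X))$ on which, by the computation in the proof of Proposition~\ref{prop:eight}, $\hat T^*$ acts as $\hat\Lambda(\lambda(\omega_2)) \mapsto \hat\Lambda(\lambda(\rho_i(\omega_2)^\sharp))$; a core argument for $\hat T^*$ against these analytic vectors then delivers density of the image. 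Verifying this core property carefully is the main technical subtlety; the remainder is bookkeeping with the Tomita formalism.
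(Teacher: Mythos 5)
Your computation of the identity $\ip{\hat y^*}{\hat\omega} = \big(\hat\Lambda(\lambda((x\omega_1)\omega_2))\,\big|\,\hat\Lambda(\hat y)\big)$ is correct and is essentially the paper's own proof: the same reduction via the module and product formulas for $\hat\Lambda\circ\lambda$, the same observation that $\hat y^*\lambda(x\omega_1)\in\mf n_{\hat\varphi}\cap\mf n_{\hat\varphi}^*$, and the same use of the conjugate-linear adjoint relation between $\hat T$ and $\hat T^*$ (the paper runs the chain of equalities from left to right rather than meeting in the middle at $\hat\varphi(\hat y^*\hat b\hat c)$, which is cosmetic). The only place you diverge is the density of the allowed $\eta$'s, where you propose showing that the analytic vectors of Lemma~\ref{lem:one} form a core for $\hat T^*$ --- this would work, but it is heavier than necessary and is precisely the step you leave unverified. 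The paper simply cites Proposition~\ref{prop:eight}, and the quick way to close the point is to use that $\hat T^*=(\hat T^*)^{-1}$: the computation in that proposition shows that for $\omega$ as in Lemma~\ref{lem:one} the element $\omega_2:=\rho_i(\omega)^\sharp$ lies in $X$ with $\hat T^*\hat\Lambda(\lambda(\omega_2))=\hat\Lambda(\lambda(\omega))$, and the vectors $\hat\Lambda(\lambda(\omega))$ are already dense by that lemma, so no core argument is needed.
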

\begin{proof}
Let $\hat x\in \mf n_{\hat\varphi}$.  As $\lambda(x\omega_1) \in
\mf n_{\hat\varphi}$ and $\mf n_{\hat\varphi}$ is a left ideal, both
$\lambda(x\omega_1)^* \hat x \in \mf n_{\hat\varphi}$
and $\hat x^* \lambda(x\omega_1) \in \mf n_{\hat\varphi}$, so that
$\hat x^* \lambda(x\omega_1) \in \mf n_{\hat\varphi} \cap \mf n_{\hat\varphi}^*$.
Thus $\hat\Lambda(\hat x^* \lambda(x\omega_1)) \in D(\hat T)$ and
$\hat T \hat\Lambda(\hat x^* \lambda(x\omega_1))
= \hat\Lambda(\lambda(x\omega_1)^* \hat x)$.  Thus
\begin{align*}
\ip{\hat x^*}{\hat\omega} &= ( \hat x^* x \xi|\eta)
= \big( \hat x^* x \hat\Lambda(\lambda(\omega_1)) \big|
   \hat T^*\hat\Lambda(\lambda(\omega_2)) \big) 
= \big( \hat x^* \hat\Lambda(\lambda(x\omega_1)) \big|
   \hat T^*\hat\Lambda(\lambda(\omega_2)) \big) \\
&= \big( \hat\Lambda(\lambda(\omega_2)) \big|
   \hat T\hat\Lambda(\hat x^*\lambda(x\omega_1)) \big)
= \big( \hat\Lambda(\lambda(\omega_2)) \big|
   \hat\Lambda(\lambda(x\omega_1)^*\hat x) \big) \\
&= \hat\varphi\big( \hat x^* \lambda(x\omega_1) \lambda(\omega_2) \big)
= \big( \hat\Lambda(\lambda((x\omega_1)\omega_2)) \big| \hat\Lambda(\hat x) \big).
\end{align*}
As $\hat x\in\mf n_{\hat\varphi}$ was arbitrary, this shows that
$\hat\omega\in L^1(\hat\G)\cap L^2(\G)$ with $\Lambda(\hat\lambda(\hat\omega))
= \hat\Lambda(\lambda((x\omega_1)\omega_2))$ as claimed.
As above, the collection of allowed $\xi$ is dense in $L^2(\G)$, and
by Proposition~\ref{prop:eight}, the collection of allowed $\eta$ is also
dense in $L^2(\G)$.  Hence certainly the collection of thus constructed
$\hat\omega$ will be dense in $L^1(\hat \G)$.
\end{proof}

\subsection{Coefficients of representations}

Let $\pi:L^1(\G)\rightarrow\mc B(H)$ be a completely bounded (anti-)homomorphism
with associated co(-anti-)representation $V_\pi$.  Given $\alpha,\beta\in H$,
a \emph{co-efficient} of $\pi$ is the operator
\begin{equation} \label{eqn:coefficient}
T^\pi_{\alpha,\beta} = (\iota \otimes \omega_{\alpha,\beta})V_\pi \in L^\infty(\G).
\end{equation}
Equivalently, $T^\pi_{\alpha,\beta}$ is determined by the dual pairing \[\ip{T^\pi_{\alpha,\beta}}{\omega} = \big(\pi(\omega)\alpha \big|\beta\big) \qquad (\omega \in L^1(\G)).\]  Note that the latter definition of $T^\pi_{\alpha, \beta}$ makes sense even if $\pi$ is bounded, but not necessarily completely bounded -- a situation we will address in Section \ref{sec:importance_cb}.
 
In this subsection and the next one, we study the analytic structure of co-efficients of completely bounded representations $\pi:L^1(\G)\rightarrow\mc B(H)$.  We show that if $\pi^*$ is also completely bounded, then co-efficients of $\pi$ give rise to completely bounded multipliers of $L^1(\hat\G)$, a tool which will allow us to prove the main theorems of this section (Theorems \ref{thm:main2} and \ref{thm:summ1})  below. 

\begin{proposition}\label{prop:nine}
Suppose that both $\pi$ and $\pi^*$ extend to completely bounded maps
$L^1(\G)\rightarrow\mc B(H)$.  For every $\alpha,\beta \in H$, we have that
$T^{\pi^*}_{\alpha,\beta} \in D(S)$ with
$S(T^{\pi^*}_{\alpha,\beta})^* = T^\pi_{\beta,\alpha}$.
\end{proposition}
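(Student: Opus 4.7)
My plan is to derive the result from the duality identity
\[
\ip{T^{\pi^*}_{\alpha,\beta}}{\omega^\sharp} = \overline{\ip{T^\pi_{\beta,\alpha}}{\omega}} \qquad (\omega \in L^1_\sharp(\G)),
\]
together with a general characterization of $D(S)$. For the identity, I note that if $\omega \in L^1_\sharp(\G)$ then $\omega^\sharp \in L^1_\sharp(\G)$ with $(\omega^\sharp)^\sharp = \omega$, so the defining relation $\pi^*(\eta) = \pi(\eta^\sharp)^*$ applied to $\eta = \omega^\sharp$ yields $\pi^*(\omega^\sharp) = \pi(\omega)^*$. Pairing this gives
\[
\ip{T^{\pi^*}_{\alpha,\beta}}{\omega^\sharp} = (\pi^*(\omega^\sharp)\alpha|\beta) = (\pi(\omega)^*\alpha|\beta) = \overline{(\pi(\omega)\beta|\alpha)} = \overline{\ip{T^\pi_{\beta,\alpha}}{\omega}}.
\]

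I then compare this with the defining formula $\ip{x}{\omega^\sharp} = \overline{\ip{S(x)^*}{\omega}}$ for $x \in D(S)$: the identity above matches exactly with the candidates $x = T^{\pi^*}_{\alpha,\beta}$ and $S(x)^* = T^\pi_{\beta,\alpha}$. To conclude I would invoke the converse characterization of $D(S)$: if $y \in L^\infty(\G)$ satisfies $\ip{x}{\omega^\sharp} = \overline{\ip{y^*}{\omega}}$ for every $\omega \in L^1_\sharp(\G)$, then $x \in D(S)$ with $S(x) = y$. This expresses $S$ as the closed-operator adjoint, with an appropriate $\ast$-twist, of the antilinear closed involution $\sharp$ on $L^1(\G)$, and is part of the duality theory of $(S,D(S))$ and $(\sharp,L^1_\sharp(\G))$ developed in \cite{kus}. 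Applied with $x = T^{\pi^*}_{\alpha,\beta}$ and $y = (T^\pi_{\beta,\alpha})^*$ it closes the argument.

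The main obstacle is securing the converse characterization in a form usable here. A direct route via the extended Haagerup characterization of $D(S)$ (the sufficient condition recalled just before Theorem \ref{thm:inv_case}), with families $x_i = T^{\pi^*}_{f_i,\beta}$ drawn from $V_{\pi^*}$ and $y_i$ built from $V_\pi^*$, yields $\sum_i \Delta(x_i)(1\otimes y_i) = (\iota\otimes\iota\otimes\omega_{\alpha,\beta})(V_{\pi^*,13}V_{\pi^*,23}V_{\pi,23}^*)$. The residual factor $V_{\pi^*}V_\pi^*$ on the last two legs collapses to $1$ precisely when $V_\pi$ is invertible, but this invertibility is the content of Theorem \ref{thm:main2}, toward which the present proposition is a stepping stone. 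Hence one really must route the argument through the $\sharp$-duality rather than through an explicit Haagerup decomposition of $T^{\pi^*}_{\alpha,\beta}\otimes 1$.
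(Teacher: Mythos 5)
Your proposal is correct and takes essentially the same route as the paper: the same pairing computation relating coefficients of $\pi$ and $\pi^*$ against $\omega^\sharp$, followed by the converse duality characterisation of $D(S)$ in terms of $\sharp$, which the paper isolates as Proposition~\ref{prop:aptwo} in the appendix and proves exactly as you describe, via closed-operator adjoint duality for the analytic generator $\tau_{-i/2}$. The only cosmetic difference is that you derive the identity directly for $T^{\pi^*}_{\alpha,\beta}$ using $(\omega^\sharp)^\sharp=\omega$, whereas the paper first treats $T^\pi_{\beta,\alpha}$ and then flips via $S\circ\ast\circ S=\ast$; your closing observation that the explicit Haagerup-type criterion for membership in $D(S)$ cannot be used here (since it would presuppose the invertibility of $V_\pi$ established only later) is precisely the point behind the paper's remark that ``a little care is needed.''
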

\begin{proof}
Let $\omega \in L^1_\sharp(\G)$, so that
\[ \ip{T^\pi_{\beta,\alpha}}{\omega^\sharp}
= \big( \pi(\omega^\sharp)\beta \big| \alpha \big)
= \overline{ \big( \pi(\omega^\sharp)^*\alpha \big| \beta \big) }
= \overline{ (\pi^*(\omega)\alpha|\beta) }
= \overline{ \ip{T^{\pi^*}_{\alpha,\beta}}{\omega} }. \]
Comparing this with the definition of $\sharp$, we might hope that
$T^\pi_{\beta,\alpha} \in D(S)$ with $S(T^\pi_{\beta,\alpha})^*
= T^{\pi^*}_{\alpha,\beta}$.  This is indeed true, but a little care is
needed, see Proposition~\ref{prop:aptwo} in the appendix.
As $S\circ *\circ S = *$,
we also see that $T^{\pi^*}_{\alpha,\beta} \in D(S)$ with
$S(T^{\pi^*}_{\alpha,\beta})^* = T^\pi_{\beta,\alpha}$, as claimed.
\end{proof}

Given a co-efficient $T^\pi_{\alpha,\beta}$, arguing as in Section~\ref{sec:hatp}
and as in the proof of Theorem~\ref{thm:inv_case}, we see that
\begin{align*}
\Delta(T^\pi_{\alpha,\beta}) = (\iota\otimes\iota\otimes\omega_{\alpha,\beta})
(V_{\pi, 13} V_{\pi,23})
= \sum_i (\iota\otimes\omega_{f_i,\beta})(V_\pi) \otimes
(\iota\otimes\omega_{\alpha,f_i})(V_\pi)
= \sum_i T^\pi_{f_i,\beta} \otimes T^\pi_{\alpha,f_i},
\end{align*}
the sum converging $\sigma$-weakly.  This observation, combined with the
previous proposition, shows that the hypotheses of the next theorem are not
so outlandish; compare with the proof of Theorem~\ref{thm:main2} below.

\begin{theorem}\label{thm:one}
Let $\G$ be a locally compact quantum group, let $x\in L^\infty(\G)$, and suppose
that $\sigma\Delta(x) = \sum_i b_i \otimes a_i \in L^\infty(\G)
\otimes_{eh} L^\infty(\G)$.
Suppose furthermore that each $b_i\in D(S)^*$, and $\sum_i S(b_i^*)^* S(b_i^*)
< \infty$.  Then, for $\xi,\eta\in L^2(\G)$,
\[ x \hat\lambda(\hat\omega_{\xi,\eta})
= \hat\lambda \Big( \sum_i \hat\omega_{a_i\xi,S(b_i^*)\eta} \Big), \]
the sum converging absolutely in $L^1(\hat\G)$.
\end{theorem}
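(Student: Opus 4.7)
The plan splits into three steps: an \emph{a priori} convergence estimate, a reduction of the claimed identity to a purely algebraic one in $L^\infty(\G)\vnten L^\infty(\G)$, and a verification of that identity using the characterisation of the antipode recalled just before Theorem~\ref{thm:inv_case}. For convergence, Cauchy--Schwarz immediately gives
\[
\sum_i\|\hat\omega_{a_i\xi,S(b_i^*)\eta}\| \le \Big(\sum_i\|a_i\xi\|^2\Big)^{1/2}\Big(\sum_i\|S(b_i^*)\eta\|^2\Big)^{1/2},
\]
and both factors are finite: the first because $\sum_i a_i^*a_i$ converges $\sigma$-weakly (a built-in feature of the extended Haagerup structure of $\sigma\Delta(x) = \sum_i b_i\otimes a_i$, see Section~\ref{sec:hatp}), and the second by the explicit hypothesis. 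Hence $\hat\omega := \sum_i\hat\omega_{a_i\xi,S(b_i^*)\eta}$ is a bona fide element of $L^1(\hat\G)$.

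For the reduction, since both sides of the claim lie in $L^\infty(\G)$, I would test against matrix coefficients $\omega_{\zeta,\tau}$ for arbitrary $\zeta,\tau\in L^2(\G)$. Using $\hat\lambda(\hat\nu) = (\hat\nu\otimes\iota)\hat W$, the claim becomes the operator identity $(1\otimes x)\hat W = \sum_i(S(b_i^*)^*\otimes 1)\hat W(a_i\otimes 1)$ on $L^2(\G)\otimes L^2(\G)$. Right-multiplying by $\hat W^*$ and using $\hat W(a\otimes 1)\hat W^* = \sigma\Delta(a)$ (which follows from $\hat W = \Sigma W^*\Sigma$ together with the fundamental relation $W^*(1\otimes a)W = \Delta(a)$), and then applying the tensor flip $\sigma$, this reduces the entire theorem to the algebraic identity
\[
x\otimes 1 = \sum_i (1\otimes S(b_i^*)^*)\Delta(a_i) \quad\text{in } L^\infty(\G)\vnten L^\infty(\G).
\]

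Finally, since $b_i\in D(S)^* = D(S^{-1})$ we have $S(b_i^*)^* = S^{-1}(b_i)$, so the target reads $x\otimes 1 = \sum_i(1\otimes S^{-1}(b_i))\Delta(a_i)$. At the purely Hopf-algebraic level this is $(\iota\otimes m)(\iota\otimes S^{-1}\otimes\iota)\tau_{23}(\iota\otimes\Delta)\Delta(x) = x\otimes 1$, a straightforward consequence of coassociativity and the antipode-counit relation $m(\iota\otimes S^{-1})\sigma\Delta = \epsilon\cdot 1$. To justify the identity in the LCQG setting I would invoke the characterisation of the antipode from Section~\ref{sec:lcqg}: the pair of families $(x_i,y_i) := (S^{-1}(b_i),a_i) = (S(b_i^*)^*,a_i)$ satisfies the required square summability conditions $\sum_i x_ix_i^* = \sum_i S(b_i^*)^*S(b_i^*) < \infty$ (by hypothesis) and $\sum_i y_i^*y_i = \sum_i a_i^*a_i < \infty$ (from the extended Haagerup structure); and the companion identity $S^{-1}(x)\otimes 1 = \sum_i \Delta(S^{-1}(b_i))(1\otimes a_i)$ follows from the coproduct-antipode relation $\Delta\circ S^{-1} = \sigma\circ(S^{-1}\otimes S^{-1})\circ\Delta$ applied to $\Delta(x) = \sum_i a_i\otimes b_i$. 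The principal obstacle is exactly here, in Step~3: rigorously invoking the antipode characterisation in the extended Haagerup setting, where the sums are only $\sigma$-weakly convergent and $S$ is unbounded, and in particular verifying the companion identity via a normality/closedness argument. Once this algebraic identity is in hand, Steps~1 and~2 glue everything together routinely.
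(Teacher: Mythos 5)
Your Steps~1 and~2 are sound: the Cauchy--Schwarz estimate is exactly the convergence argument the paper uses, and your reduction of the theorem to the operator identity $x\otimes 1=\sum_i(1\otimes S(b_i^*)^*)\Delta(a_i)$ (equivalently $(L^*\otimes\iota)(\hat W)=(1\otimes x)\hat W$ for $L^*(\hat y)=\sum_i S(b_i^*)^*\hat y a_i$, which is the content of Proposition~\ref{prop:four}) is a correct reformulation. The problem is that this identity \emph{is} the entire analytic content of the theorem, and your Step~3 does not prove it. Two things go wrong. First, the ``antipode--counit relation'' $m(\iota\otimes S^{-1})\sigma\Delta=\epsilon\cdot 1$ has no meaning for a general locally compact quantum group: there is no counit on $L^\infty(\G)$, the antipode is unbounded, and one cannot apply $S^{-1}$ leg-wise to the $\sigma$-weakly convergent sum $\sum_i a_i\otimes b_i$ (the $a_i$ are not assumed to lie in $D(S^{-1})$, nor is $x$, so your ``companion identity'' $S^{-1}(x)\otimes 1=\sum_i\Delta(S^{-1}(b_i))(1\otimes a_i)$ is not even well posed). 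Second, the characterisation of $S$ recalled before Theorem~\ref{thm:inv_case} is an implication in the wrong direction: it says that \emph{if} both identities $u\otimes 1=\sum_i\Delta(x_i)(1\otimes y_i)$ and $v\otimes 1=\sum_i(1\otimes x_i)\Delta(y_i)$ hold, \emph{then} $u\in D(S)$ with $S(u)=v$. It cannot be used to establish either identity, and the one you need is precisely of the second form with $x_i=S^{-1}(b_i)$, $y_i=a_i$, $v=x$ --- so you would be assuming what you set out to prove.

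The paper avoids this obstacle entirely by never writing down an antipode identity in $L^\infty(\G)\vnten L^\infty(\G)$. Instead it works on the GNS space: Proposition~\ref{prop:eight} shows that for $b_i\in D(S)^*$ the operator $S(b_i^*)$ is implemented spatially as $\hat T^* b_i\hat T^*$ on vectors of the form $\eta=\hat T^*\hat\Lambda(\lambda(\omega_2))$, so that $S(b_i^*)\eta=\hat T^*\hat\Lambda(\lambda(b_i\omega_2))$; and Proposition~\ref{prop:seven} identifies $\hat\omega_{a_i\xi,S(b_i^*)\eta}$ with the vector $\hat\Lambda(\lambda((a_i\omega_1)(b_i\omega_2)))$ under $\Lambda\circ\hat\lambda$. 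After these substitutions the desired identity collapses to $\sum_i\ip{\Delta(y^*)(a_i\otimes b_i)}{\omega_1\otimes\omega_2}=\ip{\Delta(y^*x)}{\omega_1\otimes\omega_2}$, which is immediate since $\sum_i a_i\otimes b_i=\Delta(x)$, together with a density argument in $\xi,\eta$. To complete your Step~3 you would essentially have to reconstruct this machinery (or something equivalent, such as the Hilbert C$^*$-module arguments of \cite{dm2}); there is no purely algebraic shortcut.
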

\begin{proof}
Let $\omega_1,\omega_2\in L^1(\G)$ be given by Proposition~\ref{prop:seven}.
Let $\xi = \hat\Lambda(\lambda(\omega_1)), \eta = \hat T^* \hat\Lambda(
\lambda(\omega_2))$; recall that such choices are both dense in $L^2(\G)$.
Then $\hat\omega_{\xi,\eta} \in L^1(\hat\G) \cap L^2(\G)$ and
$\Lambda(\hat\lambda(\hat\omega_{\xi,\eta})) = \hat\Lambda(\lambda(\omega_1
\omega_2))$.

For each $i$, let $\xi_i = a_i(\xi)$ and $\eta_i = S(b_i^*)(\eta)$.
Then
\[ \sum_i \|\xi_i\|^2 = \sum_i \ip{a_i^*a_i}{\omega_{\xi,\xi}}
\leq \Big\| \sum_i a_i^* a_i \Big\| \|\xi\|^2. \]
Similarly,
\[ \sum_i \|\eta_i\|^2
\leq \Big\| \sum_i S(b_i^*)^* S(b_i^*) \Big\| \|\eta\|^2. \]
It follows that $\sum_i \hat\omega_{\xi_i,\eta_i}$ converges absolutely
in $L^1(\hat\G)$.

By Proposition~\ref{prop:eight}, $\eta_i = \hat T^*\hat\Lambda(\lambda(
b_i \omega_2))$, and also $\xi_i = a_i\hat\Lambda(\lambda(\omega_1))$.
By Proposition~\ref{prop:seven},
\[ \Lambda(\hat\lambda(\hat\omega_{\xi_i,\eta_i}))
= \hat\Lambda\big(\lambda\big( (a_i\omega_1)(b_i\omega_2) \big)\big). \]
Thus, for $y\in \mf n_{\varphi}$,
\begin{align*}
\sum_i \big( \Lambda(\hat\lambda(\hat\omega_{\xi_i,\eta_i})) \big| \Lambda(y) \big)
&= \sum_i \big( \hat\Lambda\big(\lambda\big(
  (a_i\omega_1)(b_i\omega_2) \big)\big)  \big| \Lambda(y) \big) \\
&= \sum_i \ip{y^*}{(a_i\omega_1)(b_i\omega_2)}
= \sum_i \ip{\Delta(y^*)(a_i\otimes b_i)}{\omega_1\otimes\omega_2} \\
&= \ip{\Delta(y^*x)}{\omega_1\otimes\omega_2}
= \ip{y^*}{x(\omega_1\omega_2)}
= \big( \hat\Lambda(\lambda(x(\omega_1\omega_2))) \big| \Lambda(y) \big).
\end{align*}
Thus,
\begin{align*} \Lambda\big( x \hat\lambda(\hat\omega_{\xi,\eta}) \big)
&= x \hat\Lambda(\lambda(\omega_1\omega_2))
= \hat\Lambda(\lambda(x(\omega_1\omega_2)))
= \sum_i \Lambda(\hat\lambda(\omega_{\xi_i,\eta_i})), \end{align*}
which completes the proof, as $\Lambda$ injects, and such $\xi,\eta$
are dense.
\end{proof}

We remark that we could weaken the hypothesis that $\sigma\Delta(x)\in
L^\infty(\G) \otimes_{eh} L^\infty(\G)$ to just requiring that
$\Delta(x) = \sum_i a_i \otimes b_i$, the sum converging $\sigma$-weakly in
$L^\infty(\G)\vnten L^\infty(\G)$, and with $\sum_i a_i^*a_i < \infty$
and $\sum_i S(b_i^*)^* S(b_i^*) < \infty$.

We can interpret this result in terms of \emph{left multipliers} or
\emph{centralisers} on $L^1(\hat \G)$.  We shall follow \cite{dm2} (see also
\cite{jnr}, but be aware of differing language).  A left multiplier of
$L^1(\hat\G)$ is a linear map $L:L^1(\hat\G) \rightarrow L^1(\hat\G)$ with
$L(\hat\omega_1\hat\omega_2) = L(\hat\omega_1) \hat\omega_2$ for
each $\hat\omega_1, \hat\omega_2\in L^1(\hat\G)$.  If additionally $L$ is
completely bounded, then an equivalent condition is that the adjoint
$L^*:L^\infty(\hat\G) \rightarrow L^\infty(\hat\G)$ satisfies
$\hat \Delta L^* = (L^*\otimes\iota)\hat \Delta$.

Given data as in the above theorem, we may define a $\sigma$-weakly continuous linear map
$L^*:L^\infty(\hat\G) \rightarrow \mc B(L^2(\G))$ by
\begin{eqnarray} \label{eqn:mult_adjoint} L^*(\hat x) = \sum_i S(b_i^*)^* \hat x a_i
\qquad (\hat x \in L^\infty(\hat\G)).
\end{eqnarray}
Then clearly the preadjoint $L:\mc B(L^2(\G))_* \rightarrow L^1(\hat\G)$
satisfies $\hat\lambda(L\omega_{\xi,\eta}) = x \hat\lambda(\hat\omega_{\xi,\eta}) 
$ for all $\xi,\eta\in L^2(\G)$.
Thus $L$ factors through the quotient map $\mc B(L^2(\G))_* \rightarrow
L^1(\hat\G)$ and induces a map on $L^1(\hat\G)$, still denoted by $L$.  Hence we can regard
$L^*$ as a map on $L^\infty(\hat\G)$.  We record the following
for use later.

\begin{proposition}\label{prop:four}
With notation as above, $L^*\in\mc{CB}(L^\infty(\hat\G))$,
\[ \|L^*\|_{cb} \le \Big\| \sum_{i} S(b_i^*)^*S(b_i^*)\Big\|^{1/2}
\Big\|\sum_i a_i^*a_i\Big\|^{1/2}, \]
and $(L^*\otimes\iota)(\hat W) = (1\otimes x)\hat W$.  
\end{proposition}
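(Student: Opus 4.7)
The plan is to establish the two assertions separately, relying on the extended Haagerup tensor product machinery from Section~\ref{sec:hatp} for the first, and on slicing $\hat W$ against vector functionals for the second.

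For complete boundedness and the norm bound, I would realise $L^*$ as a "sandwich" operator. Using the construction from Section~\ref{sec:hatp}, define bounded operators $C,A : L^2(\G) \to L^2(\G) \otimes \ell^2(I)$ by
\[ C(\xi) = \sum_i S(b_i^*)\xi \otimes \delta_i, \qquad A(\xi) = \sum_i a_i\xi \otimes \delta_i, \]
which are bounded precisely because $C^*C = \sum_i S(b_i^*)^*S(b_i^*)$ and $A^*A = \sum_i a_i^* a_i$ converge $\sigma$-weakly by hypothesis. A direct computation (of the form done in Section~\ref{sec:hatp}) shows that for each $\hat x\in L^\infty(\hat\G)$ the partial sums of $\sum_i S(b_i^*)^*\hat x a_i$ converge $\sigma$-weakly to $C^*(\hat x\otimes 1_{\ell^2(I)})A$; thus $L^*(\hat x) = C^*(\hat x\otimes 1)A$. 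This factorisation writes $L^*$ as the composition of the completely isometric $*$-homomorphism $\hat x\mapsto \hat x\otimes 1$ with the multiplication map $T\mapsto C^*TA$, which is completely bounded with cb-norm at most $\|C\|\|A\|$. Hence $L^*$ is completely bounded, $\sigma$-weakly continuous, and
\[ \|L^*\|_{cb} \le \|C\|\,\|A\| = \Big\|\sum_i S(b_i^*)^*S(b_i^*)\Big\|^{1/2}\Big\|\sum_i a_i^*a_i\Big\|^{1/2}, \]
as required.

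For the corepresentation identity $(L^*\otimes\iota)(\hat W) = (1\otimes x)\hat W$, both sides live in $\mc B(L^2(\G))\vnten L^\infty(\G)$, so it suffices to verify equality after pairing with elementary functionals $\omega_{\xi,\eta}\otimes \omega$, where $\xi,\eta\in L^2(\G)$ and $\omega \in L^\infty(\G)_*$, since such functionals are total in the predual. On the left, using that $L:\mc B(L^2(\G))_*\to L^1(\hat\G)$ is the preadjoint of $L^*$,
\[ \bigl\langle (L^*\otimes\iota)(\hat W), \omega_{\xi,\eta}\otimes\omega\bigr\rangle
= \bigl\langle \hat W, L(\omega_{\xi,\eta})\otimes\omega\bigr\rangle
= \bigl\langle \hat\lambda(L(\omega_{\xi,\eta})), \omega\bigr\rangle, \]
and by the defining property of $L$ recalled just before the statement, $\hat\lambda(L(\omega_{\xi,\eta})) = x\hat\lambda(\hat\omega_{\xi,\eta})$. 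On the right, a routine slicing computation gives
\[ \bigl\langle (1\otimes x)\hat W, \omega_{\xi,\eta}\otimes\omega\bigr\rangle
= \bigl\langle x \cdot (\omega_{\xi,\eta}\otimes\iota)(\hat W), \omega\bigr\rangle
= \bigl\langle x\hat\lambda(\hat\omega_{\xi,\eta}), \omega\bigr\rangle, \]
matching the left-hand side.

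The only genuine obstacle is keeping track of $\sigma$-weak continuity carefully enough to manipulate the slices; in particular one needs $L^* \otimes \iota$ to be well-defined as a normal map on $L^\infty(\hat\G)\vnten L^\infty(\G)$, which is ensured by the factorisation $L^*(\hat x) = C^*(\hat x\otimes 1)A$ established in the first paragraph. Everything else amounts to bookkeeping with slice maps and the already-available identity $\hat\lambda(L(\omega_{\xi,\eta})) = x\hat\lambda(\hat\omega_{\xi,\eta})$ coming from Theorem~\ref{thm:one}.
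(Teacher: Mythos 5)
Your argument is correct and is essentially the paper's proof with the details written out: the paper dismisses the norm estimate as ``a standard calculation from the definition of $L^*$'' (which is exactly your row--column factorisation $L^*(\hat x)=C^*(\hat x\otimes 1)A$), and for the identity $(L^*\otimes\iota)(\hat W)=(1\otimes x)\hat W$ it invokes precisely the slice computation you carry out, using $\hat\lambda(L\hat\omega_{\xi,\eta})=x\hat\lambda(\hat\omega_{\xi,\eta})$ from Theorem~\ref{thm:one} (the paper also cites \cite[Proposition 2.4]{dm2}, which you replace by the direct verification). No gaps; this is a faithful expansion of the intended proof.
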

\begin{proof}
The norm estimate is a standard calculation from the definition of $L^*$
given by (\ref{eqn:mult_adjoint}).  The last assertion follows from
\cite[Proposition 2.4]{dm2}; indeed, this is just a
calculation using the definition of $\hat\lambda$ and that
$x \hat\lambda(\hat\omega_{\xi,\eta}) = \hat\lambda(L\hat\omega_{\xi,\eta})$.
\end{proof}

We remark that it follows from \cite{dm2} that $x\in C^b(\G)$
(compare with Theorem~\ref{thm:summ1} below) and that $x\in D(S)$.

\subsection{When we get an invertible corepresentation}

We will work in a little generality, and deal with possibly degenerate
homomorphisms.  Given a homomorphism $\pi:A\rightarrow\mc B(H)$ of a Banach
algebra $A$ on a Hilbert space $H$, the \emph{essential space of $\pi$} is $H_e$,
the closed linear span of $\{ \pi(a)\xi :a\in A,\xi\in H\}$.

The following shows that if $\pi$ and $\pi^*$ are completely bounded, then
$V_\pi$ is an invertible operator (suitably interpreted in the degenerate
case), and thus we have a converse to Theorem~\ref{thm:inv_case}.

\begin{theorem}\label{thm:main2}
Let $\pi:L^1(\G)\rightarrow\mc B(H)$ be a completely bounded
representation such that $\pi^*$ extends to a completely bounded representation.
Letting $H_e$ be the essential space of $\pi$, we have that $V_{\check\pi} V_\pi
= V_\pi V_{\check\pi}$ is a (not necessarily orthogonal) projection of
$L^2(\G)\otimes H$ onto $L^2(\G) \otimes H_e$.  Furthermore, both $V_\pi$ and
$V_{\check\pi}$ have ranges equal to $L^2(\G) \otimes H_e$.  Hence $V_\pi$ and
$V_{\check\pi}$ restrict to operators on $L^2(\G) \otimes H_e$, and are mutual
inverses in $\mc B(L^2(\G) \otimes H_e)$.
\end{theorem}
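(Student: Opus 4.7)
The plan is to combine a direct range analysis with the multiplier machinery of Theorem~\ref{thm:one} and Proposition~\ref{prop:four}.

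First I would verify that $\check\pi$ extends to a completely bounded anti-homomorphism on $L^1(\G)$: complete boundedness is Proposition~\ref{prop:two}, while the anti-multiplicative identity $\check\pi(\omega_1\omega_2) = \check\pi(\omega_2)\check\pi(\omega_1)$ follows from $(\omega_1\omega_2)^* = \omega_1^*\omega_2^*$ in $L^1(\G)$ together with the anti-multiplicativity of $\sharp$ on $L^1_\sharp(\G)$.  Consequently $V_{\check\pi}$ is a co-anti-representation, and by density the essential space of $\check\pi$ coincides with $H_e$.  A short range computation then shows that both $V_\pi$ and $V_{\check\pi}$ map $L^2(\G)\otimes H$ into $L^2(\G)\otimes H_e$: for $\beta\in H_e^\perp$ and $\alpha\in H$, $\xi,\eta\in L^2(\G)$,
\[
\big(V_\pi(\xi\otimes\alpha)\,\big|\,\eta\otimes\beta\big) = \big(\pi(\omega_{\xi,\eta})\alpha\,\big|\,\beta\big) = 0,
\]
and similarly for $V_{\check\pi}$.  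In particular $V_\pi V_{\check\pi}$ and $V_{\check\pi}V_\pi$ have range contained in $L^2(\G)\otimes H_e$.

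The heart of the argument is to show that $V_\pi V_{\check\pi}$ restricts to the identity on $L^2(\G)\otimes H_e$; together with the range containment this makes $V_\pi V_{\check\pi}$ an idempotent with range $L^2(\G)\otimes H_e$.  I would obtain the identity statement by applying Theorem~\ref{thm:one} to the coefficient operator $x = T^{\check\pi}_{\alpha,\beta}$, with the decomposition $\sigma\Delta(x) = \sum_i T^{\check\pi}_{\alpha,f_i}\otimes T^{\check\pi}_{f_i,\beta}$ read off from the co-anti-rep equation.  The crucial antipode hypothesis follows from $(T^{\check\pi}_{\alpha,\beta})^* = T^{\pi^*}_{\beta,\alpha}$ (immediate from $V_{\check\pi}^* = V_{\pi^*}$ in Proposition~\ref{prop:two}) combined with Proposition~\ref{prop:nine}, giving $(T^{\check\pi}_{\alpha,f_i})^*\in D(S)$ with $S((T^{\check\pi}_{\alpha,f_i})^*)^* = T^\pi_{\alpha,f_i}$.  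Proposition~\ref{prop:four} then yields a completely bounded multiplier $L^*$ of $L^\infty(\hat\G)$ with formula $L^*(\hat y) = \sum_i T^\pi_{\alpha,f_i}\,\hat y\,T^{\check\pi}_{f_i,\beta}$ and identity $(L^*\otimes\iota)(\hat W) = (1\otimes T^{\check\pi}_{\alpha,\beta})\hat W$.  Pairing this multiplier identity with states built from vectors of the form $\pi(\omega_0)\alpha'$ in $H_e$, and unpacking via the corepresentation equation $(\Delta\otimes\iota)V_\pi = V_{\pi,13}V_{\pi,23}$, then yields $V_\pi V_{\check\pi}(\xi\otimes\pi(\omega_0)\alpha') = \xi\otimes\pi(\omega_0)\alpha'$ on a dense subset of $L^2(\G)\otimes H_e$, hence on all of it by continuity.

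A parallel argument (applying Theorem~\ref{thm:one} after taking adjoints, exploiting $V_{\check\pi}^* = V_{\pi^*}$ and Proposition~\ref{prop:nine} in the opposite direction) shows that $V_{\check\pi}V_\pi$ is likewise the identity on $L^2(\G)\otimes H_e$, hence also an idempotent with range $L^2(\G)\otimes H_e$.  Both $V_\pi V_{\check\pi}$ and $V_{\check\pi}V_\pi$ then satisfy the Moore--Penrose relations $V_\pi V_{\check\pi}V_\pi = V_\pi$ and $V_{\check\pi}V_\pi V_{\check\pi} = V_{\check\pi}$; comparing the slice formulas $(\omega\otimes\iota)(V_\pi V_{\check\pi})$ and $(\omega\otimes\iota)(V_{\check\pi}V_\pi)$ using these identities together with the $L^\infty(\G)\vnten\mc B(H)$ structure yields the equality $V_\pi V_{\check\pi} = V_{\check\pi}V_\pi$, and the restrictions of $V_\pi$ and $V_{\check\pi}$ to the invariant subspace $L^2(\G)\otimes H_e$ are mutual inverses.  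I expect the main obstacle to be the multi-leg slice computation that translates Proposition~\ref{prop:four}'s multiplier identity into the operator equation on $L^2(\G)\otimes H_e$, together with verifying the extended Haagerup convergence in the decomposition of $\sigma\Delta(T^{\check\pi}_{\alpha,\beta})$ (especially the $\sigma$-weak convergence of $\sum_i T^\pi_{\alpha,f_i}(T^\pi_{\alpha,f_i})^*$, which does not reduce to a standard slice of $V_\pi V_\pi^*$).
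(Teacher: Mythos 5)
Your first half is essentially the paper's argument (run on $T^{\check\pi}_{\alpha,\beta}$ rather than the paper's $T^{\tilde\pi}_{\alpha,\beta}$, which is a harmless variant), and it works: the telescoping computation exploits the anti-multiplicativity of $\check\pi$ in the inner slot to give $(\iota\otimes\omega_{\check\pi(\omega)\alpha,\beta})(V_\pi V_{\check\pi}) = (\check\pi(\omega)\alpha|\beta)1$, whence $V_\pi V_{\check\pi}=1$ on $L^2(\G)\otimes H_e$. One bookkeeping correction: in $\sigma\Delta(T^{\check\pi}_{\alpha,\beta})=\sum_i b_i\otimes a_i$ the right assignment is $a_i=T^{\check\pi}_{\alpha,f_i}$ and $b_i=T^{\check\pi}_{f_i,\beta}$ (you have them swapped), and with this assignment every required sum \emph{is} a standard slice: $\sum_i a_i^*a_i=(\iota\otimes\omega_\alpha)(V_{\pi^*}V_{\pi^*}^*)$, $\sum_i b_ib_i^*=(\iota\otimes\omega_\beta)(V_{\check\pi}V_{\check\pi}^*)$, and $\sum_i S(b_i^*)^*S(b_i^*)=\sum_i T^\pi_{f_i,\beta}(T^\pi_{f_i,\beta})^*=(\iota\otimes\omega_\beta)(V_\pi V_\pi^*)$. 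The convergence obstacle you flag at the end is an artefact of the mislabelling, not a real issue.

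The genuine gaps are in the second half. First, ``applying Theorem~\ref{thm:one} after taking adjoints'' does not produce $V_{\check\pi}V_\pi$: adjoints convert $V_\pi V_{\check\pi}$ into $V_{\check\pi}^*V_\pi^*=V_{\pi^*}V_{\tilde\pi}$, and more fundamentally the telescoping step requires an \emph{anti}-homomorphism in the inner slice (so that $\rho(\omega_1\omega')=\rho(\omega')\rho(\omega_1)$ pushes $\rho(\omega_1)\alpha$ inside); running it with $\pi$ in that slot fails because $\pi$ is a homomorphism. The paper's device is to pass to the opposite quantum group $\G^\op$, verify $L^1_\sharp(\G^\op)=L^1_\sharp(\G)^*$, and check that $\phi=\tilde\pi:L^1(\G^\op)\to\mc B(H)$ is a completely bounded representation with $\phi^*=\check\pi$ and $\tilde\phi=\pi$, so that the first-half argument applied to $\phi$ yields $V_{\check\pi}V_\pi$; this is a substantive step your proposal omits. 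Second, the Moore--Penrose relations $V_\pi V_{\check\pi}V_\pi=V_\pi$ and $V_{\check\pi}V_\pi V_{\check\pi}=V_{\check\pi}$ do not imply $V_\pi V_{\check\pi}=V_{\check\pi}V_\pi$ (take $A$ a non-unitary isometry and $B=A^*$: then $ABA=A$, $BAB=B$, yet $AB\neq BA$), and two idempotents with the same range can still differ by their kernels. The equality must be obtained by identifying the kernels as well: the paper shows $\ker(V_\pi V_{\check\pi})=\Image(V_{\pi^*}V_{\tilde\pi})^\perp=(L^2(\G)\otimes\tilde H_e)^\perp=\ker(V_{\check\pi}V_\pi)$, using that the whole argument applies equally to $\pi^*$ and $\tilde\pi$ to show $V_{\pi^*}V_{\tilde\pi}$ and $V_{\tilde\pi}V_{\pi^*}$ are projections onto $L^2(\G)\otimes\tilde H_e$. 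Your proposal needs both of these ingredients to close.
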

\begin{proof}
Fix $\alpha,\beta\in H$.
By Proposition~\ref{prop:two}, we know that $V_{\tilde\pi} = V_\pi^*$,
and thus $T^{\tilde\pi}_{\alpha,\beta} = (T^\pi_{\beta,\alpha})^*$.
As in the discussion before Theorem~\ref{thm:one},
\[ \sigma \Delta(T^{\tilde\pi}_{\alpha,\beta})
= \sigma \Delta(T^{\pi}_{\beta, \alpha})^*  = \sum_i (T^\pi_{\beta,f_i})^*
\otimes (T^\pi_{f_i,\alpha})^* 
= \sum_i T^{\tilde\pi}_{f_i,\beta} \otimes  T^{\tilde\pi}_{\alpha,f_i}. \]
As $\pi^*$ is completely bounded, by Proposition~\ref{prop:nine},
\[ (T^{\tilde\pi}_{f_i,\beta})^* = T^\pi_{\beta,f_i} \in D(S)
\quad\text{with}\quad S\big( (T^{\tilde\pi}_{f_i,\beta})^* \big)
= S\big( T^\pi_{\beta,f_i}\big)
= (T^{\pi^*}_{f_i,\beta})^* = T^{\check\pi}_{\beta,f_i}, \]
where the last equality uses Proposition~\ref{prop:two}.
Moreover, \begin{align*}
\sum_i S\big( (T^{\tilde\pi}_{f_i,\beta})^* \big)^*S\big( (T^{\tilde\pi}_{f_i,\beta})^* \big) &=\sum_i T^{\pi^*}_{f_i,\beta}(T^{\pi^*}_{f_i,\beta})^*
= \sum_i (\iota\otimes\omega_{f_i,\beta})(V_{\pi^*})\big(
(\iota\otimes\omega_{\beta,f_i})(V_{\pi^*})\big)^* \\
&= \sum_i (\iota\otimes\omega_{f_i,\beta})(V_{\pi^*})
(\iota\otimes\omega_{f_i,\beta})(V_{\pi^*}^*)
= (\iota\otimes\omega_\beta)(V_{\pi^*}V_{\pi^*}^*), 
\end{align*}
and similarly $\sum_i(T^{\tilde\pi}_{\alpha,f_i})^*T^{\tilde\pi}_{\alpha,f_i} =  (\iota\otimes\omega_\alpha)(V_{\pi}V_{\pi}^*)$.
We can hence apply Theorem~\ref{thm:one} and Proposition~\ref{prop:four}
to conclude that if we define
\begin{eqnarray} \label{eqn:mult_adjoint2}
L^*:L^\infty(\hat\G) \rightarrow \mc B(L^2(\G));
\quad \hat x \mapsto \sum_i T^{\pi^*}_{f_i,\beta} \hat x
T^{\tilde\pi}_{\alpha,f_i}, 
\end{eqnarray}
then $L^* \in \mc {CB}(L^\infty(\hat\G))$,
and $(L^*\otimes\iota)(\hat W) = (1\otimes T^{\tilde\pi}_{\alpha,\beta})\hat W$.
As $\hat W= \sigma W^* \sigma$, this shows that $(\iota\otimes L^*)(W^*) W
= T^{\tilde\pi}_{\alpha,\beta}\otimes 1$.  That is,
\[ T^{\tilde\pi}_{\alpha,\beta}\otimes 1
= \sum_i (1\otimes T^{\pi^*}_{f_i,\beta}) W^*
   (1\otimes T^{\tilde\pi}_{\alpha,f_i}) W
= \sum_i (1\otimes T^{\pi^*}_{f_i,\beta}) \Delta(T^{\tilde\pi}_{\alpha,f_i}). \]

So, let $\omega_1,\omega_2\in L^1(\G)$, and consider
\begin{align*} \sum_i & \ip{(1\otimes T^{\pi^*}_{f_i,\beta})
   \Delta(T^{\tilde\pi}_{\alpha,f_i})}{\omega_1\otimes\omega_2}
= \sum_i \ip{T^{\tilde\pi}_{\alpha,f_i}}{\omega_1
   \big( \omega_2 T^{\pi^*}_{f_i,\beta} \big)}
= \sum_i \big( \tilde\pi\big( \omega_1
   \big( \omega_2 T^{\pi^*}_{f_i,\beta} \big) \big) \alpha \big| f_i \big) \\
&= \sum_i \big( \tilde\pi\big( \omega_2 T^{\pi^*}_{f_i,\beta} \big) 
   \tilde\pi(\omega_1)\alpha \big| f_i \big)
= \sum_i \ip{ T^{\pi^*}_{f_i,\beta}
   T^{\tilde\pi}_{\tilde\pi(\omega_1)\alpha,f_i} }{\omega_2} \\
&= \ip{ (\iota\otimes\omega_{\tilde\pi(\omega_1)\alpha,\beta})
   (V_{\pi^*} V_{\tilde\pi})}{\omega_2}.
\end{align*}
However, we know that this is equal to $\ip{T^{\tilde\pi}_{\alpha,\beta}\otimes 1}
{\omega_1\otimes\omega_2}$.  So we conclude that
\[ (\iota\otimes\omega_{\tilde\pi(\omega)\alpha,\beta})
   (V_{\pi^*} V_{\tilde\pi}) = \ip{T^{\tilde\pi}_{\alpha,\beta}}{\omega} 1
   = \big( \tilde\pi(\omega)\alpha \big| \beta \big) 1
\qquad (\omega\in L^1(\G), \alpha,\beta\in H). \]
Letting $\tilde H_e$ be the essential space of $\tilde\pi$, we conclude that
$V_{\pi^*} V_{\tilde\pi} = 1$ on $L^2(\G) \otimes \tilde H_e \subseteq L^2(\G)
\otimes H$.

As $L^1_\sharp(\G)$ is dense in $L^1(\G)$, we see that the essential space
for $\pi^*$ also equals $\tilde H_e$.  Letting $(e_i)$ be an orthonormal basis
for $L^2(\G)$, a simple calculation shows that
\[ V_{\pi^*}(\xi\otimes\gamma) = \sum_i e_i \otimes \pi^*(\omega_{\xi,e_i})(\gamma)
 \in L^2(\G) \otimes \tilde H_e \qquad (\xi\in L^2(\G), \gamma\in H). \]
So $V_{\pi^*}$ has range contained in $L^2(\G) \otimes \tilde H_e$.  It follows
that $V_{\pi^*} V_{\tilde\pi}$ is actually a (not necessarily orthogonal)
projection from $L^2(\G)\otimes H$ onto $L^2(\G)\otimes \tilde H_e$.
Furthermore, the range of $V_{\pi^*}$ must actually be $L^2(\G)\otimes \tilde H_e$.

Now set $\phi = \pi^*$, so $\phi$ is also a completely bounded homomorphism
$L^1(\G)\rightarrow \mc B(H)$.  Thus the same argument now applied to $\phi$
shows that $V_{\phi^*} V_{\tilde\phi} = V_\pi V_{\pi^*}^* = V_\pi V_{\check\pi}$
is a (not necessarily orthogonal) projection from $L^2(\G)\otimes H$ onto
$L^2(\G)\otimes H_e$ where $H_e$ is the essential space of $\tilde\phi$,
which agrees with the essential space for $\phi^* = \pi$.  We also conclude
that the range of $V_{\phi^*} = V_\pi$ is $L^2(\G)\otimes H_e$.

Following \cite[Section~4]{kvvn}, the \emph{opposite quantum group}
to $\G$ is $\G^\op$, where $L^\infty(\G^\op) = L^\infty(\G)$ and
$\Delta^\op = \sigma\Delta$.  That is, we reverse the multiplication in
$L^1(\G)$ to get $L^1(\G^\op)$.  Then $R^\op = R$ and $\tau^\op_t = \tau_{-t}$
for each $t$.  Thus $S^\op = R^\op \tau^\op_{-i/2} = R \tau_{i/2} = S^{-1}
= * \circ S \circ *$.  For $x\in D(S^\op) = D(S)^*$, and $\omega\in
L^1_\sharp(\G^\op)$,
\[ \ip{x}{\omega^{\sharp,\op}} = \overline{ \ip{S^\op(x)^*}{\omega} }
= \overline{ \ip{S(x^*)}{\omega} } = \ip{S(x^*)^*}{\omega^*}. \]
It follows that $\omega^*\in L^1_\sharp(\G)$, as for each $y\in D(S)$,
setting $x=y^*\in D(S)^*$, we see that
\[ \overline{\ip{S(y)^*}{\omega^*}} = \ip{S(x^*)}{\omega}
= \overline{ \ip{x}{\omega^{\sharp,\op}} }
= \ip{y}{(\omega^{\sharp,\op})^*}. \]
So $(\omega^*)^\sharp = (\omega^{\sharp,\op})^*$, and reversing this
argument shows that $L^1_\sharp(\G) = L^1_\sharp(\G^\op)^*$.

Thus, if we set $\phi = \tilde\pi: L^1(\G^\op) \rightarrow \mc B(H)$ then
$\phi$ is a completely bounded representation, and $\tilde\phi = \pi$, 
so that the essential space of $\phi$ is $H_e$.
For $\omega\in L^1_\sharp(\G^\op)$, we have that $\omega^*\in L^1_\sharp(\G)$,
and $\phi^*(\omega) = \phi(\omega^{\sharp,\op})^*
= \tilde\pi( ((\omega^*)^\sharp)^* )^*
= \pi( (\omega^*)^\sharp ) = \check\pi(\omega)$.  Hence, as $\check\pi$
is completely bounded, also $\phi^*$ is completely bounded; and so
$\phi^* = \check\pi$ and similarly $\check\phi = \pi^*$.
Applying the previous argument to $\phi$, we conclude that
$V_{\phi^*} V_{\tilde\phi} = V_{\check\pi} V_\pi$ is a (not necessarily orthogonal)
projection onto $L^2(\G) \otimes H_e$.  We also see that the range of
$V_{\phi^*} = V_{\check\pi}$ is $L^2(\G) \otimes H_e$.

Finally, we also have that $V_\phi V_{\check\phi} = V_{\tilde\pi} V_{\pi^*}$
is a (not necessarily orthogonal) projection onto $L^2(\G) \otimes \tilde H_e$.
It follows that
\[ \ker\big( V_\pi V_{\check\pi} \big)
= \Image(V_{\check\pi}^* V_\pi^*)^\perp
= \Image(V_{\pi^*} V_{\tilde\pi})^\perp
= \big( L^2(\G) \otimes \tilde H_e \big)^\perp, \]
and analogously, also $\ker( V_{\check\pi}V_\pi )
= \Image( V_{\tilde\pi}V_{\pi^*})^\perp
= ( L^2(\G) \otimes \tilde H_e)^\perp$.  So we conclude that
$\ker( V_\pi V_{\check\pi} ) = \ker( V_{\check\pi}V_\pi )$.  So
$V_\pi V_{\check\pi}$ and $V_{\check\pi} V_\pi$ are projections with
the same range and kernel, and hence are equal.
\end{proof}

The above theorem says that if both $\pi$ and $\pi^*$ are
completely bounded and non-degenerate, then $V_\pi$ is an invertible
corepresentation of $L^\infty(\G)$, and, informally,
\[ (S\otimes\iota)(V_\pi) = V_\pi^{-1}. \]
This is well-known in, for example, the theory of algebraic compact
quantum groups (compare with \cite[Proposition~3.1.7(iii)]{tt}).
It is interesting that our arguments
seem to require a lot of structure-- for example, by using the duality
theory for locally compact quantum groups.

From Proposition~\ref{prop:four} and the proof of Theorem~\ref{thm:main2},
we obtain the following corollary, showing that the co-efficients
$T^{\tilde\pi}_{\alpha, \beta}$ considered in Theorem~\ref{thm:main2}
naturally induce completely bounded multipliers on $L^1(\hat \G)$.

\begin{corollary} \label{cor:cbmults}
Let $\pi:L^1(\G)\rightarrow\mc B(H)$ be a completely bounded
representation such that $\pi^*$ extends to a completely bounded representation.
Then for any $\alpha, \beta \in H$, the co-efficient $T^{\tilde \pi}_{\alpha,\beta}$
represents a completely bounded left multiplier $L:L^1(\hat \G) \to L^1(\hat\G)$,
determined by 
\[ \hat \lambda(L\hat \omega) = T^{\tilde\pi}_{\alpha,\beta}\hat\lambda(\hat \omega)
\qquad (\hat \omega \in L^1(\hat \G)). \]
Moreover, $\|L\|_{cb} \le \|\pi\|_{cb}\|\pi^*\|_{cb} \|\alpha\|\|\beta\|$.
\end{corollary}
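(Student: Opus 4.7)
The plan is to read off this corollary as a direct consequence of the machinery already assembled for Theorem~\ref{thm:main2} combined with Proposition~\ref{prop:four}. More precisely, I would argue that the coefficient $T^{\tilde\pi}_{\alpha,\beta}$ fits into the hypothesis of Theorem~\ref{thm:one} with explicit summands coming from an orthonormal basis $(f_i)$ of $H$, and then invoke Proposition~\ref{prop:four} to convert this into a completely bounded left multiplier on $L^1(\hat\G)$.

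First, I would recall the identity
\[ \sigma\Delta(T^{\tilde\pi}_{\alpha,\beta}) = \sum_i T^{\tilde\pi}_{f_i,\beta} \otimes T^{\tilde\pi}_{\alpha,f_i} \]
established in the proof of Theorem~\ref{thm:main2}. Setting $b_i = T^{\tilde\pi}_{f_i,\beta}$ and $a_i = T^{\tilde\pi}_{\alpha,f_i}$, I would verify the hypotheses of Theorem~\ref{thm:one}: since $\pi^*$ is completely bounded, Proposition~\ref{prop:nine} gives $b_i^* = T^\pi_{\beta,f_i} \in D(S)$ with $S(b_i^*) = T^{\pi^*}_{f_i,\beta}$; and the two crucial sums compute, as in the proof of Theorem~\ref{thm:main2}, to
\[ \sum_i S(b_i^*)^* S(b_i^*) = (\iota\otimes\omega_\beta)(V_{\pi^*} V_{\pi^*}^*), \qquad \sum_i a_i^* a_i = (\iota\otimes\omega_\alpha)(V_\pi V_\pi^*), \]
both $\sigma$-weakly convergent and bounded.

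Next I would apply Theorem~\ref{thm:one} together with Proposition~\ref{prop:four} to the element $x = T^{\tilde\pi}_{\alpha,\beta}$: these produce a $\sigma$-weakly continuous completely bounded map $L^* \in \mc{CB}(L^\infty(\hat\G))$ whose preadjoint $L:L^1(\hat\G) \to L^1(\hat\G)$ satisfies $\hat\lambda(L\hat\omega_{\xi,\eta}) = T^{\tilde\pi}_{\alpha,\beta}\,\hat\lambda(\hat\omega_{\xi,\eta})$ for all $\xi,\eta \in L^2(\G)$. By density of such functionals in $L^1(\hat\G)$ and continuity, this determines $L$ uniquely and verifies the stated identity. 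The fact that $L$ is a \emph{left} multiplier (rather than merely a bounded map) is captured via the relation $(L^*\otimes\iota)(\hat W) = (1\otimes T^{\tilde\pi}_{\alpha,\beta})\hat W$ from Proposition~\ref{prop:four}, which is equivalent to $\hat\Delta L^* = (L^*\otimes\iota)\hat\Delta$, i.e.\ $L$ being a left centraliser.

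Finally, for the norm bound, the estimate from Proposition~\ref{prop:four} gives
\[ \|L\|_{cb} = \|L^*\|_{cb} \le \Big\| \sum_i S(b_i^*)^* S(b_i^*) \Big\|^{1/2} \Big\| \sum_i a_i^* a_i \Big\|^{1/2}, \]
and each factor is readily bounded by $\|V_{\pi^*}\|\,\|\beta\|$ and $\|V_\pi\|\,\|\alpha\|$ respectively (applying the standard estimate $\|(\iota\otimes\omega_\xi)(T)\| \le \|T\|\,\|\xi\|^2$ and taking square roots), yielding $\|L\|_{cb} \le \|\pi^*\|_{cb}\|\pi\|_{cb}\|\alpha\|\|\beta\|$. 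The only real content here is the reduction to the framework of Section~\ref{sec:invreps}; the remainder is bookkeeping, so I do not foresee any serious obstacle beyond correctly matching the conventions for $V_\pi$, $V_{\tilde\pi}$, $V_{\pi^*}$ and their coefficients.
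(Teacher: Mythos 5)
Your argument is correct and follows the paper's own proof of this corollary essentially verbatim: the paper likewise reads the result off from the decomposition $\sigma\Delta(T^{\tilde\pi}_{\alpha,\beta}) = \sum_i T^{\tilde\pi}_{f_i,\beta}\otimes T^{\tilde\pi}_{\alpha,f_i}$ and the two sum computations already carried out in the proof of Theorem~\ref{thm:main2}, then invokes Theorem~\ref{thm:one} and Proposition~\ref{prop:four} for the multiplier property and the norm bound. One cosmetic slip: Proposition~\ref{prop:nine} gives $S(T^\pi_{\beta,f_i}) = (T^{\pi^*}_{f_i,\beta})^*$ rather than $T^{\pi^*}_{f_i,\beta}$, but the sum $\sum_i S(b_i^*)^*S(b_i^*) = (\iota\otimes\omega_\beta)(V_{\pi^*}V_{\pi^*}^*)$ you actually use is the correct one, so nothing downstream is affected.
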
  
\begin{proof}
The fact that $L$ is a completely bounded left multiplier was already observed in
the proof of Theorem~\ref{thm:main2}.  Moreover, from Proposition~\ref{prop:four} and the proof of Theorem~\ref{thm:main2}, we deduce that 
\begin{align*} \|L\|_{cb} &= \|L^*\|_{cb} \leq
\|(\iota \otimes \omega_{\beta})(V_{\pi^*}V_{\pi^*}^*)\|^{1/2}
\|(\iota \otimes \omega_{\alpha})(V_{\tilde\pi}^*V_{\tilde\pi})\|^{1/2}
\leq \|\pi\|_{cb}\|\pi^*\|_{cb} \|\alpha\|\|\beta\|. \end{align*}
\end{proof} 

When $\G$ is a locally compact group, the above result is classical and is due to
de Canniere and Haagerup \cite[Theorem 2.2]{dh}.
(See also \cite[Theorem 3]{bs} for when $\G$ is the dual of a locally compact group.)

Let $G$ be a locally compact group, $\pi:L^1(G) \to \mc B(H)$ a bounded representation,
and $\pi_0:G \to \mc B(H)$ the uniformly bounded representation associated to $\pi$.
The correspondence $\pi \leftrightarrow \pi_0$ given in Section~\ref{sec:intro}
implies that each co-efficient operator $T^\pi_{\alpha,\beta} \in L^\infty(G)$ is
actually (a.e.) equal to the continuous function $\varphi^\pi_{\alpha,\beta} \in C^b(G)$,
$s \mapsto \big(\pi_0(s)\alpha|\beta\big)$.

Let $\mc B_0(H)$ be the compact operators on $H$, so that $M(\mc B_0(H)) = \mc B(H)$.
We hence get the \emph{strict topology} on $\mc B(H)$, where a bounded net
$(T_\alpha)$ is strictly-null if and only if the nets $(T_\alpha x)$ and
$(xT_\alpha)$ are norm-null, for each $x\in\mc B_0(H)$.
One consequence of the above correspondence is that the corepresentation
$V_\pi \in L^\infty(G)\vnten\mc B(H)$ is actually a member of
$C^b_{str}(G,B(H))$, the space of bounded continuous maps
$G\rightarrow\mc B(H)$, where $B(H)$ is given the strict topology.
Indeed, $V_\pi$ corresponds to the map $G\rightarrow\mc B(H); s\mapsto \pi_0(s)$.

When $\G$ is a locally compact quantum group, the natural replacement
for $C^b_{str}(G,\mc B(H))$ is the multiplier algebra
$M(C_0(\G) \otimes \mc B_0(H))$.  This follows, as $M(C_0(G)\otimes
\mc B_0(H))$ is isomorphic to $C^b_{str}(G,\mc B(H))$.  Indeed, given
$F\in C^b_{str}(G,\mc B(H))$ and $f\in C_0(G,\mc B_0(H)) = C_0(G)
\otimes \mc B_0(H))$, clearly the pointwise products $Ff, fF$ are
members of $C_0(G,\mc B_0(H))$, and so $F$ is a multiplier, and
a partition of unity argument shows that every multiplier arises in this way.

The following gives a summary of the results of this section.  The argument
about multiplier algebras should be compared with \cite[Section~4]{woro},
which in turn is inspired by \cite[Page~441]{bs2}.

\begin{theorem}\label{thm:summ1}
Let $\G$ be a locally compact quantum group, and let $\pi:L^1(\G)
\rightarrow \mc B(H)$ be a non-degenerate, completely bounded representation.
Then the following are equivalent:
\begin{enumerate}
\item $\check\pi$ extends to a completely bounded map $L^1(\G)\rightarrow
\mc B(H)$;
\item $V_\pi$ is invertible.
\end{enumerate}
In this case, $V_\pi^{-1} = V_{\check\pi}$, and
$V_\pi \in M(C_0(\G) \otimes \mc B_0(H))$.
\end{theorem}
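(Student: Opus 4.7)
The plan is to deduce the equivalence of (1) and (2) directly from the preceding theorems, and then to establish the multiplier algebra claim using Corollary~\ref{cor:cbmults} together with a standard slice-map argument.

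By Proposition~\ref{prop:two}, $\check\pi$ extends to a completely bounded map if and only if $\pi^*$ does, so I may freely substitute one for the other. With this reformulation, the implication (2) $\Rightarrow$ (1) is exactly Theorem~\ref{thm:inv_case}, which also delivers $V_\pi^{-1} = V_{\check\pi}$. For (1) $\Rightarrow$ (2), I would appeal to Theorem~\ref{thm:main2}: since $\pi$ is non-degenerate, the essential space $H_e$ is all of $H$, hence $V_\pi V_{\check\pi} = V_{\check\pi} V_\pi$ is a (not necessarily orthogonal) projection onto $L^2(\G) \otimes H$, i.e.\ the identity. Thus $V_\pi$ is invertible, with inverse $V_{\check\pi}$, as required.

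For the assertion $V_\pi \in M(C_0(\G) \otimes \mc B_0(H))$, the key input is Corollary~\ref{cor:cbmults}: every coefficient $T^{\tilde\pi}_{\alpha,\beta}$ induces a completely bounded left multiplier of $L^1(\hat\G)$. By the representation theory of such multipliers recorded in the remark following Proposition~\ref{prop:four} (drawing on \cite{dm2}), each $T^{\tilde\pi}_{\alpha,\beta}$ in fact lies in $C^b(\G) = M(C_0(\G))$. Using $V_\pi^* = V_{\tilde\pi}$ from Proposition~\ref{prop:two}, the identity $T^\pi_{\alpha,\beta} = (T^{\tilde\pi}_{\beta,\alpha})^*$ and the $\ast$-closure of $M(C_0(\G))$ then give that every coefficient of $V_\pi$ itself, and (via $V_{\check\pi} = V_\pi^{-1}$) of $V_\pi^{-1}$, belongs to $M(C_0(\G))$.

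To lift this coefficient-level information to membership in $M(C_0(\G) \otimes \mc B_0(H))$, I would follow the strategy of \cite[Section~4]{woro} and \cite[Page~441]{bs2}. Since $\mc B_0(H)$ is the norm-closed linear span of rank-one operators, it suffices to show that $V_\pi(1 \otimes \theta_{\xi,\eta})$ and $(1 \otimes \theta_{\xi,\eta})V_\pi$ lie in $C_0(\G) \otimes \mc B_0(H)$ for all $\xi,\eta \in H$; such products can be rewritten, via slicing with functionals in $\mc B(H)_*$, in terms of the coefficients $T^\pi_{\alpha,\beta}$, which are now known to be multipliers of $C_0(\G)$. Using the characterisation of $C_0(\G)$ as the norm-closure of the slices $(\iota \otimes \hat\omega)W$, a standard approximation argument then places the products into $C_0(\G) \otimes \mc B_0(H)$; applying the same reasoning to $V_\pi^{-1}$ controls multiplication on the other side and completes the argument. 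The main obstacle is precisely this final step: carefully passing from the pointwise ``slice lies in $M(C_0(\G))$'' statement to a genuine norm-level membership in the C$^*$-algebraic multiplier algebra of the tensor product, which requires exploiting the corepresentation identity and the interplay between $V_\pi$ and $V_\pi^{-1}$ to approximate by elements of $C_0(\G) \otimes \mc B_0(H)$ rather than only of $L^\infty(\G) \vnten \mc B(H)$.
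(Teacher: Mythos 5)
Your treatment of the equivalence of (1) and (2) is correct and is exactly the paper's route: Proposition~\ref{prop:two} converts the condition on $\check\pi$ into one on $\pi^*$, Theorem~\ref{thm:inv_case} gives (2)~$\Rightarrow$~(1) together with $V_\pi^{-1}=V_{\check\pi}$, and Theorem~\ref{thm:main2} with $H_e=H$ gives (1)~$\Rightarrow$~(2), since a projection of $L^2(\G)\otimes H$ onto all of $L^2(\G)\otimes H$ is the identity.

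The claim $V_\pi\in M(C_0(\G)\otimes\mc B_0(H))$ is where there is a genuine gap. You reduce to the statement that every coefficient $(\iota\otimes\omega)(V_\pi)$, $\omega\in\mc B(H)_*$, lies in $C^b(\G)=M(C_0(\G))$, and then hope that a ``standard approximation argument'' upgrades this slice-wise information to membership of $V_\pi$ itself in the multiplier algebra of the tensor product. No such general principle exists: knowing that all slices of an element of $L^\infty(\G)\vnten\mc B(H)$ land in $M(C_0(\G))$ does not place that element in $M(C_0(\G)\otimes\mc B_0(H))$ --- slice maps do not detect membership in a C$^*$-tensor product, let alone in its multiplier algebra --- and you yourself flag this lifting step as the unresolved obstacle, so the proof is not complete as written. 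The corepresentation identity must be exploited at the level of the operator, not of its coefficients. The paper's argument is a one-liner in the Baaj--Skandalis/Woronowicz spirit that you cite but do not carry out: from $(\Delta\otimes\iota)(V_\pi)=V_{\pi,13}V_{\pi,23}$, invertibility, and $\Delta(\cdot)=W^*(1\otimes\cdot)W$, one gets
\[ V_{\pi,13} = (\Delta\otimes\iota)(V_\pi)\,V_{\pi,23}^{-1}
= W_{12}^*\, V_{\pi,23}\, W_{12}\, V_{\pi,23}^{-1}. \]
Here $W_{12}$ lies in $M(C_0(\G)\otimes\mc B_0(L^2(\G)))\otimes 1$, while $V_{\pi,23}$ and $V_{\pi,23}^{-1}$ have trivial first leg and hence multiply $C_0(\G)\otimes\mc B_0(L^2(\G))\otimes\mc B_0(H)$ into itself (since $\mc B(K)\mc B_0(K)\subseteq\mc B_0(K)$). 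Thus $V_{\pi,13}$, and hence $V_\pi$, lies in the required multiplier algebra, with no coefficient analysis and no appeal to Corollary~\ref{cor:cbmults}. I recommend replacing your final two paragraphs with this computation.
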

\begin{proof}
The equivalences have been established by Theorem~\ref{thm:inv_case}
and Theorem~\ref{thm:main2}.  Suppose now that $V_\pi^{-1}$ exists.
Recall that the coproduct on $L^\infty(\G)$ is implemented by the
fundamental unitary by $\Delta(x) = W^*(1\otimes x)W$ for $x\in L^\infty(\G)$.
Furthermore, $W\in M(C_0(\G)\otimes\mc B_0(L^2(\G)))$.  Then
\[ V_{\pi,13} = (\Delta\otimes\iota)(V_\pi) V_{\pi,23}^{-1}
= W^*_{12} V_{\pi,23} W_{12} V_{\pi,23}^{-1}
\in M\big( C_0(\G) \otimes B_0(L^2(\G)) \otimes \mc B_0(H) \big). \]
Thus $V_\pi \in M(C_0(\G)\otimes \mc B_0(H))$ as claimed.
\end{proof}

\subsection{Application to (co)isometric corepresentations}
\label{sec:coisocase}

A corepresentation $U\in L^\infty(\G) \vnten \mc B(H)$ is a
\emph{coisometric corepresentation} if $U$ is a coisometry, namely $UU^*=1$.
As $\Delta$ is a $*$-homomorphism, we see that $U^*$ is a co-anti-representation.

\begin{proposition}
Let $U\in L^\infty(\G) \vnten \mc B(H)$ be a coisometric corepresentation,
associated to $\pi:L^1(\G)\rightarrow\mc B(H)$.  Then $\check\pi$ is
a completely bounded representation, with $V_{\check\pi} = U^*$.
\end{proposition}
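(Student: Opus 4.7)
The strategy is to adapt the proof of Theorem~\ref{thm:inv_case}, with the coisometric adjoint $U^*$ playing the role that $V_\pi^{-1}$ played there. The crucial observation is that in the computations of Theorem~\ref{thm:inv_case}, the inverse $V_\pi^{-1}$ is only ever used in the combinations $V_{\pi,23} V_{\pi,23}^{-1}$ and $V_{\pi,23} V_{\pi,23}^{-1} V_{\pi,13}^{-1}$, i.e.\ only as a right inverse for the second leg. Since $U$ is coisometric, we have $U U^* = 1$, and hence $U_{23} U_{23}^* = 1$, which is exactly what is needed.

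More precisely, fix $\alpha,\beta\in H$, choose an orthonormal basis $(f_i)$ of $H$, and set
\[ x_i = (\iota\otimes\omega_{f_i,\beta})(U), \qquad y_i = (\iota\otimes\omega_{\alpha,f_i})(U^*). \]
Arguing as in Section~\ref{sec:hatp}, one gets $\sum_i x_i x_i^* = (\iota\otimes\omega_\beta)(UU^*) = \|\beta\|^2\cdot 1$ and $\sum_i y_i^* y_i = (\iota\otimes\omega_\alpha)(UU^*) = \|\alpha\|^2\cdot 1$, both $\sigma$-weakly convergent and finite (indeed, exactly here the coisometry hypothesis enters). Then, using $(\Delta\otimes\iota)(U) = U_{13}U_{23}$ and the standard slice-map identity from the proof of Theorem~\ref{thm:inv_case},
\[ \sum_i \Delta(x_i)(1\otimes y_i) = (\iota\otimes\iota\otimes\omega_{\alpha,\beta})(U_{13} U_{23} U_{23}^*) = (\iota\otimes\omega_{\alpha,\beta})(U)\otimes 1. \]
Similarly, using that $\Delta$ is a $*$-homomorphism so $(\Delta\otimes\iota)(U^*) = U_{23}^* U_{13}^*$,
\[ \sum_i (1\otimes x_i)\Delta(y_i) = (\iota\otimes\iota\otimes\omega_{\alpha,\beta})(U_{23} U_{23}^* U_{13}^*) = (\iota\otimes\omega_{\alpha,\beta})(U^*)\otimes 1. \]
Both cancellations use only $UU^*=1$. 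By the characterisation of the antipode recalled before Theorem~\ref{thm:inv_case}, this shows $T^\pi_{\alpha,\beta} = (\iota\otimes\omega_{\alpha,\beta})(U) \in D(S)$, with $S(T^\pi_{\alpha,\beta}) = (\iota\otimes\omega_{\alpha,\beta})(U^*)$.

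To conclude, take $\omega^*\in L^1_\sharp(\G)$ and unwind the definition of $\check\pi$: for $x\in D(S)$, $\langle x, (\omega^*)^\sharp\rangle = \langle S(x),\omega\rangle$, so
\[ \big(\check\pi(\omega)\alpha\big|\beta\big) = \langle T^\pi_{\alpha,\beta}, (\omega^*)^\sharp\rangle = \langle S(T^\pi_{\alpha,\beta}),\omega\rangle = \langle U^*, \omega\otimes\omega_{\alpha,\beta}\rangle. \]
Since such $\omega$ are dense in $L^1(\G)$, this identifies $\check\pi$ as the (completely bounded) slice map associated to $U^*$, giving $V_{\check\pi} = U^*$ as required.

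The only point requiring mild care is the antipode characterisation itself: strictly, one should check that the families $(x_i),(y_i)$ satisfy the hypotheses used in \cite[Corollary~5.34]{kv}/\cite[Proposition~5.6]{vv}, but this is immediate from the slice-map calculations above. I expect no serious obstacle — the argument really is a transparent specialisation of Theorem~\ref{thm:inv_case}, relying only on the half-cancellation $UU^*=1$ rather than full invertibility.
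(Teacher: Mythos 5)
Your proof is correct and is essentially the paper's own argument: the authors likewise take $x_i = (\iota\otimes\omega_{f_i,\beta})(U)$, $y_i = (\iota\otimes\omega_{\alpha,f_i})(U^*)$, observe that only the half-cancellation $U_{23}U_{23}^*=1$ is needed in the two slice-map computations from Theorem~\ref{thm:inv_case}, and conclude via the antipode characterisation that $S\big((\iota\otimes\omega_{\alpha,\beta})(U)\big)=(\iota\otimes\omega_{\alpha,\beta})(U^*)$, whence $V_{\check\pi}=U^*$. Your added remark that $\sum_i x_ix_i^*=\|\beta\|^2 1$ and $\sum_i y_i^*y_i=\|\alpha\|^2 1$ correctly verifies the convergence hypotheses the paper leaves implicit.
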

\begin{proof}
This follows almost exactly as the proof of Theorem~\ref{thm:inv_case}.
Let $\alpha,\beta\in H$, let $(f_i)$ be an orthonormal basis of $H$, and set
\[ x_i = (\iota\otimes\omega_{f_i,\beta})(U), \quad
y_i = (\iota\otimes\omega_{\alpha,f_i})(U^*). \]
Then as before, we see that
\[ \sum_i \Delta(x_i)(1\otimes y_i)
= (\iota\otimes\iota\otimes\omega_{\alpha,\beta})(U_{13} U_{23} U_{23}^*)
= (\iota\otimes\omega_{\alpha,\beta})(U) \otimes 1, \]
and similarly
\[ \sum_i (1\otimes x_i)\Delta(y_i)
= (\iota\otimes\iota\otimes\omega_{\alpha,\beta})(U_{23} U_{23}^* U_{13}^*)
= (\iota\otimes\omega_{\alpha,\beta})(U^*)\otimes 1. \]
Hence $(\iota\otimes\omega_{\alpha,\beta})(U) \in D(S)$ with
$S((\iota\otimes\omega_{\alpha,\beta})(U)) =
(\iota\otimes\omega_{\alpha,\beta})(U^*)$.  As before, it now follows
that $\check\pi$ is completely bounded with $V_{\check\pi} = U^*$.
\end{proof}

The following is now immediate from Theorem~\ref{thm:main2}.

\begin{corollary}
Let $U\in L^\infty(\G) \vnten \mc B(H)$ be a coisometric corepresentation.
Then $U$ is unitary.
\end{corollary}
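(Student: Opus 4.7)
The plan is to reduce the corollary directly to Theorem~\ref{thm:main2}. Let $\pi:L^1(\G)\to\mc B(H)$ be the completely bounded representation with $V_\pi = U$. The previous proposition tells us that $\check\pi$ extends to a completely bounded representation with $V_{\check\pi} = U^*$, and then by Proposition~\ref{prop:two} the map $\pi^*$ also extends to a completely bounded representation. So the hypotheses of Theorem~\ref{thm:main2} are satisfied.

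Next, I would apply Theorem~\ref{thm:main2} directly. It gives that $V_\pi V_{\check\pi} = UU^*$ is a (not necessarily orthogonal) projection of $L^2(\G)\otimes H$ onto $L^2(\G)\otimes H_e$, where $H_e$ is the essential space of $\pi$. But by the coisometry hypothesis, $UU^* = 1$, so this projection must equal the identity on $L^2(\G)\otimes H$. Consequently $H_e = H$, i.e.\ $\pi$ is automatically non-degenerate.

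Finally, Theorem~\ref{thm:main2} also asserts that $V_{\check\pi}V_\pi = U^*U$ is a projection onto the same space $L^2(\G)\otimes H_e$. Since $H_e = H$, this projection is likewise the identity, giving $U^*U = 1$. Combined with $UU^* = 1$, we conclude that $U$ is unitary. There is no substantial obstacle here: all the hard work has been done in proving Theorem~\ref{thm:main2} (the converse direction via the multiplier/duality argument), and the present corollary is essentially just the observation that one of the two projections being already the identity forces the other one to be the identity as well.
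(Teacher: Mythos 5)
Your proof is correct and follows exactly the route the paper intends: the paper simply states that the corollary is ``immediate from Theorem~\ref{thm:main2}'', and your argument spells out precisely that deduction (using the preceding proposition to get $V_{\check\pi}=U^*$, Proposition~\ref{prop:two} to get $\pi^*$ completely bounded, and then reading off that both $UU^*$ and $U^*U$ are the same projection onto $L^2(\G)\otimes H_e$, which must be the identity since $UU^*=1$). No gaps, and no difference in approach worth noting.
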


\begin{corollary}
Let $U\in L^\infty(\G) \vnten \mc B(H)$ be an isometric corepresentation.
Then $U$ is unitary.
\end{corollary}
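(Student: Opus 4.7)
The plan is to reduce the isometric case directly to the coisometric case already handled in the previous corollary, by passing to the opposite quantum group. Recall from the proof of Theorem~\ref{thm:main2} that $\G^{\op}$ is again a locally compact quantum group, with $L^\infty(\G^{\op}) = L^\infty(\G)$ and coproduct $\Delta^{\op} = \sigma\Delta$.

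First I would verify that $U^*$ is a corepresentation of $\G^{\op}$. Since $\Delta\otimes\iota$ is a $*$-homomorphism, and conjugation by the swap $\sigma_{12}$ interchanges leg positions $1$ and $2$,
\[
(\Delta^{\op}\otimes\iota)(U^*) = \sigma_{12}\bigl((\Delta\otimes\iota)(U)\bigr)^*\sigma_{12} = \sigma_{12}(U_{23}^*U_{13}^*)\sigma_{12} = U_{13}^*U_{23}^* = (U^*)_{13}(U^*)_{23},
\]
as required. Now since $U$ is isometric, $(U^*)(U^*)^* = U^*U = 1$, so $U^*$ is a coisometric corepresentation of $\G^{\op}$. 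Applying the previous corollary to the locally compact quantum group $\G^{\op}$ yields that $U^*$ is unitary in $L^\infty(\G^{\op})\vnten\mc B(H) = L^\infty(\G)\vnten\mc B(H)$, and hence $U$ itself is unitary.

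There is no real obstacle here: the argument is entirely formal, resting on the observation that passage to $\G^{\op}$ sends a corepresentation $U$ to the corepresentation $U^*$ and therefore interchanges the isometric and coisometric conditions. Given the coisometric corollary, no further analytic machinery is needed.
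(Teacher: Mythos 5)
Your proof is correct and follows essentially the same route as the paper: both pass to the opposite quantum group $\G^{\op}$, observe that $U^*$ becomes a coisometric corepresentation there, and invoke the coisometric corollary. Your explicit verification that $U^*$ is a corepresentation of $\G^{\op}$ is accurate and merely spells out what the paper states more briefly.
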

\begin{proof}
As in the proof of Theorem~\ref{thm:main2}, following \cite[Section~4]{kvvn},
the \emph{opposite quantum group} to $\G$ is $\G^\op$, where $L^\infty(\G^\op)
= L^\infty(\G)$ and $\Delta^\op = \sigma\Delta$.  It follows that
corepresentations of $\G$ are co-anti-representations of $\G^\op$, and
vice versa.  The proof follows from the observation that $U^*$ is
thus a coisometric corepresentation of $\G^\op$.
\end{proof}

An application of this result is to the theory of induced corepresentations
in the sense of Kustermans, \cite{kus2}.  In this paper, a theory is
developed allowing one to ``induce'' a corepresentation from a ``smaller''
quantum group to a larger one.  However, in general the resulting
corepresentation is only a coisometry, and not unitary (see
\cite[Notation~5.3]{kus2}), and under a further ``integrability'' condition,
and with an elaborate argument, it is shown that this corepresentation is
unitary, \cite[Proposition~7.5]{kus2}.
Our result shows that a further condition is not necessarily, and
the induced corepresentation is always unitary.

\section{The similarly problem} \label{sec:simmprob}

We recall from \cite[Definition~3.2]{bt} (for example) that $\G$ is
\emph{amenable} if there is a state $m$ on $L^\infty(\G)$ such that
$\ip{m}{(\iota\otimes\omega)\Delta(x)} = \ip{m}{x} \ip{1}{\omega}$
for $x\in L^\infty(\G),\omega\in L^1(\G)$.  Using the unitary antipode, we
see that we could have equivalently used the obvious ``left'' variant of
this definition instead.  We call $m$ a \emph{right invariant state}.

Based on the results of the previous section, we are now a position to state the  ``similarity problem'' for locally compact quantum groups: Suppose $\G$ has the property that every completely bounded homomorphism $\pi:L^1(\G)\rightarrow\mc B(H)$, with $\pi^*$ also extending to a completely bounded homomorphism, is similar to a $*$-homomorphism.  Then is $\G$ necessarily amenable?
By Theorem~\ref{thm:summ1}, this is equivalent to asking if
having all invertible corepresentations being similar to unitary corepresentations
forces $\G$ to be amenable?  When $\G$ is commutative, this reduces to
the standard conjecture that a unitarisable locally compact group $G$
is amenable.  When $\G$ is cocommutative, \cite{bs} proves this conjecture
in the affirmative-- there is of course the stronger conjecture that actually,
no condition on $\pi^*$ is needed, and we provide more evidence for this
in Section~\ref{sec:fourieralgebra} below.

In the following, we shall show how to use an invariant state to ``average''
an invertible corepresentation to a unitary corepresentation.  This is
well-known in the compact quantum group case (see, for example,
\cite[Theorem~5.2]{woro1}).  The non-compact case is similar, but we
provide the details, as we are unaware of a good reference.

\begin{theorem}\label{thm:main}
Let $\G$ be an amenable locally compact quantum group, and
let $\pi:L^1(\G)\rightarrow\mc B(H)$ be a non-degenerate homomorphism.
The following are equivalent:
\begin{enumerate}
\item\label{thm:main:1} $\pi$ is similar to a $*$-homomorphism;
\item\label{thm:main:2} both $\pi$ and $\check\pi$ extend to completely
bounded (anti-)homomorphisms $L^1(\G)\rightarrow\mc B(H)$.
\end{enumerate}
\end{theorem}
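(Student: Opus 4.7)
The implication (1) $\Rightarrow$ (2) is Proposition~\ref{prop:three}. For (2) $\Rightarrow$ (1), my plan is to adapt the Day--Dixmier averaging argument to this setting, using an invariant state on $L^\infty(\G)$ to manufacture a positive invertible intertwiner that unitarises the corepresentation $V_\pi$.

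First I would invoke Theorem~\ref{thm:summ1}: hypothesis~(2), together with non-degeneracy of $\pi$, guarantees that $V_\pi$ is invertible in $L^\infty(\G)\vnten\mc B(H)$, and in particular $V_\pi^* V_\pi \geq \|V_\pi^{-1}\|^{-2}\cdot 1$. Let $m$ be a right-invariant state on $L^\infty(\G)$, which exists by amenability, and define the slice $m \otimes \iota : L^\infty(\G)\vnten\mc B(H) \to \mc B(H)$ by duality: $\ip{(m\otimes\iota)(y)}{\omega} = m\big((\iota\otimes\omega)(y)\big)$ for $\omega \in \mc B(H)_*$. This yields a unital completely positive map; because $m$ need not be normal, this duality-based definition is what makes the slice meaningful on the full von Neumann tensor product. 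Put $S := (m\otimes\iota)(V_\pi^* V_\pi) \in \mc B(H)$; then $S$ is positive with $S \geq \|V_\pi^{-1}\|^{-2}\cdot 1$, hence invertible.

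The crux of the argument is the intertwining identity
\[ V_\pi^*(1\otimes S) V_\pi = 1 \otimes S. \]
I would derive this by applying $m \otimes \iota \otimes \iota$ to both sides of $(\Delta\otimes\iota)(V_\pi^* V_\pi) = V_{\pi,23}^* V_{\pi,13}^* V_{\pi,13} V_{\pi,23}$. Right-invariance of $m$ reads $(m\otimes\iota)\Delta(\cdot) = m(\cdot)\cdot 1$, so the slice of the left-hand side becomes $1 \otimes S$. On the right, $V_{\pi,23}$ and $V_{\pi,23}^*$ are trivial in leg~1 and therefore commute past the first-leg slice; the remaining factor $V_{\pi,13}^* V_{\pi,13}$, sliced over leg~1, yields exactly $1 \otimes S$ in the two surviving legs, producing $V_\pi^*(1\otimes S)V_\pi$ overall.

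Setting $T := S^{1/2}$ and $W := (1\otimes T) V_\pi (1\otimes T^{-1})$, the identity above immediately forces $W^*W = 1$; since $V_\pi$ is invertible, so is $W$, and hence $W$ is unitary. Conjugation by $1\otimes T$ preserves the coassociative identity $(\Delta\otimes\iota)(W) = W_{13}W_{23}$, so $W$ is a unitary corepresentation, and the corresponding completely bounded non-degenerate homomorphism is $\rho(\omega) := T\pi(\omega) T^{-1}$. To finish, I would apply Theorem~\ref{thm:summ1} to $\rho$: unitarity of $W$ gives $V_{\check\rho} = W^{-1} = W^* = V_{\tilde\rho}$ (the last equality from Proposition~\ref{prop:two}), so $\check\rho = \tilde\rho$ on $L^1(\G)$, which is precisely the $*$-homomorphism relation $\rho(\omega^\sharp) = \rho(\omega)^*$ on $L^1_\sharp(\G)$. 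The main step requiring care is the slicing argument with a possibly non-normal invariant state and the justification of the invariance identity through the von Neumann tensor product; once that is handled, the unitarisation of $V_\pi$ is essentially the standard Woronowicz-style averaging transplanted to the locally compact setting.
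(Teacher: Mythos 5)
Your proposal is correct and follows essentially the same route as the paper: invertibility of $V_\pi$ from Theorem~\ref{thm:main2}/Theorem~\ref{thm:summ1}, averaging $V_\pi^*V_\pi$ against a right-invariant state to get a positive invertible $T$ with $V_\pi^*(1\otimes T)V_\pi = 1\otimes T$ (the paper carries out your slice computation by pairing against $\omega_1\otimes\omega_2$ and using the functional $\psi$, which is exactly the careful handling of the non-normal state you flag), and then conjugating by $1\otimes T^{1/2}$ to obtain a unitary corepresentation whose associated homomorphism is a $*$-homomorphism via Theorem~\ref{thm:inv_case} and Proposition~\ref{prop:two}. No substantive differences.
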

\begin{proof}
That (\ref{thm:main:1}) implies (\ref{thm:main:2}) is 
Proposition~\ref{prop:three}.

Now suppose that (\ref{thm:main:2}) holds, and that $\pi$ is
non-degenerate, so by Theorem~\ref{thm:main2}, $V_\pi$ is invertible.
Let $m\in L^\infty(\G)^*$ be a right invariant state.  Set
\[ T = (m\otimes\iota)(V_\pi^* V_\pi) \in\mc B(H). \]
This means (by definition) that $\ip{T}{\omega} =
\ip{m}{(\iota\otimes\omega)(V_\pi^*V_\pi)}$ for $\omega\in \mc B(H)_*$.

As $V_\pi$ is invertible, so is the positive operator $V_\pi^*V_\pi$, and
so, for some $\epsilon>0$, we have that $V_\pi^*V_\pi \geq \epsilon 1$.
It follows that for any $\omega\in\mc B(H)_*^+$,
\[ \ip{T}{\omega} = \ip{m}{(\iota\otimes\omega)(V_\pi^*V_\pi)}
\geq \ip{m}{(\iota\otimes\omega)(\epsilon 1)} = \epsilon \ip{\omega}{1}. \]
Hence $T\geq \epsilon 1$, and so we may define
\[ V = (1\otimes T^{1/2}) V_\pi (1\otimes T^{-1/2}) \in L^\infty(\G)
\vnten \mc B(H). \]
Then, as $\Delta$ is a unital $*$-homomorphism,
\begin{align*} (\Delta\otimes\iota)(V) &= (1\otimes 1\otimes T^{1/2})
V_{\pi,13} V_{\pi,23} (1\otimes 1\otimes T^{-1/2}) \\
&= (1\otimes 1\otimes T^{1/2}) V_{\pi,13} (1\otimes 1\otimes T^{-1/2})
(1\otimes 1\otimes T^{1/2}) V_{\pi,23} (1\otimes 1\otimes T^{-1/2})
= V_{13} V_{23}. \end{align*}
So $V$ is a corepresentation, say inducing a homomorphism $\phi:L^1(\G)
\rightarrow\mc B(H)$.  For $\omega\in L^1(\G)$, we have that
$\phi(\omega) = (\omega\otimes\iota)(V) = T^{1/2} \pi(\omega) T^{-1/2}$,
and so $\phi$ is similar to $\pi$.

We next show that $V$ is unitary.  For $\omega_1\in L^1(\G)$ and
$\omega_2\in\mc B(H)_*$, we may define $\psi\in\mc B(H)_*$ by
\[ \ip{R}{\psi} = \ip{V_\pi^*(1\otimes R)V_\pi}{\omega_1\otimes\omega_2}
\qquad (R\in\mc B(H)). \]
Then we see that
\begin{align*}
(\iota\otimes\psi)(V_\pi^* V_\pi)
&= (\iota\otimes\omega_1\otimes\omega_2)\big(
   V_{\pi,23}^* V_{\pi,13}^* V_{\pi,13} V_{\pi,23} \big)
= (\iota\otimes\omega_1\otimes\omega_2)
   \big((\Delta\otimes\iota)(V_\pi^*V_\pi)\big).
\end{align*}
Thus
\begin{align*} \ip{T}{\psi} = \ip{m}{(\iota\otimes\psi)(V_\pi^*V_\pi)}
&= \ip{m}{(\iota\otimes\omega_1\otimes\omega_2)
   \big((\Delta\otimes\iota)(V_\pi^*V_\pi)\big)} \\
&= \ip{m}{(\iota\otimes\omega_1)
   \Delta( (\iota\otimes\omega_2)(V_\pi^*V_\pi) )} \\
&= \ip{m}{(\iota\otimes\omega_2)(V_\pi^*V_\pi)} \ip{1}{\omega_1}
= \ip{1\otimes T}{\omega_1\otimes\omega_2}.
\end{align*}
As $\omega_1,\omega_2$ were arbitrary, we conclude that
$V_\pi^*(1\otimes T)V_\pi = 1\otimes T$.  Thus
\begin{align*}
V^*V &= (1\otimes T^{-1/2})V_\pi^* (1\otimes T^{1/2})
(1\otimes T^{1/2}) V_\pi (1\otimes T^{-1/2})
= (1\otimes T^{-1/2})(1\otimes T)(1\otimes T^{-1/2}) = 1,
\end{align*}
as required.

Arguing as in Theorem~\ref{thm:inv_case}, it is now easy to see that $\phi$
is a $*$-homomorphism, because $V$ is unitary.
\end{proof}

We can also deal with the case of degenerate representations of
$L^1(\G)$.  The proof of \cite[Proposition~6]{bs} actually shows the following:

\begin{lemma}\label{lem:degen}
Let $A$ be a $*$-algebra, and let $\pi:A\rightarrow\mc B(H)$
be a homomorphism.  Let $\pi_e:A\rightarrow\mc B(H_e)$ be the subrepresentation
given by restricting to the essential space $H_e$.  Suppose there is a
(not necessarily orthogonal) projection $Q$ from $H$ to $H_e$ such that
$\pi(a)Q = \pi(a)$ for all $a\in A$.  Then $\pi$ is similar to $*$-representation
if and only if $\pi_e$ is similar to a $*$-representation.
\end{lemma}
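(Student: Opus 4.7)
For the forward direction (which does not use the hypothesis on $Q$), I would suppose $\pi$ is similar to a $*$-representation via some invertible $S \in \mc B(H)$, so $\rho := S\pi(\cdot)S^{-1}$ is a $*$-representation on $H$. Since $S$ is a homeomorphism, the essential space of $\rho$ is $K_e := S(H_e)$, and because $\rho$ is a $*$-representation, $K_e$ reduces $\rho$ orthogonally, so $\rho_e := \rho|_{K_e}$ is a $*$-representation of $A$ on $K_e$. Then $S|_{H_e}: H_e \to K_e$ is a bounded invertible operator intertwining $\pi_e$ with $\rho_e$, witnessing that $\pi_e$ is similar to a $*$-representation.

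For the converse, I would assume $\pi_e = T_e^{-1}\tilde\pi_e(\cdot)T_e$ for some invertible $T_e \in \mc B(H_e)$ with $\tilde\pi_e$ a $*$-representation. Set $N := \ker Q$ and let $P$ denote the orthogonal projection onto $H_e$. The hypothesis $\pi(a)Q = \pi(a)$ yields $\pi(a)(1-Q) = 0$, hence $\pi(a)|_N = 0$, while $Q$ being a bounded projection onto $H_e$ produces a topological direct sum $H = H_e \oplus N$. The key geometric fact I would verify is that $(1-P)|_N : N \to H_e^\perp$ is a bounded bijection: injectivity follows from $N \cap H_e = \{0\}$, and for surjectivity, given $y \in H_e^\perp$ the element $n := (1-Q)y \in N$ satisfies $(1-P)n = (1-P)y - (1-P)Qy = y - 0 = y$. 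The open mapping theorem then makes $(1-P)|_N$ a Banach-space isomorphism.

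Define $S \in \mc B(H)$ by $S(h + n) = T_e h + (1-P)n$ for $h \in H_e$ and $n \in N$; since each summand is mapped by a bounded isomorphism onto the respective piece of $H_e \oplus H_e^\perp = H$, the operator $S$ is invertible in $\mc B(H)$. A short calculation using $\pi(a)|_N = 0$ and $\pi(a)|_{H_e} = \pi_e(a)$ then shows that with respect to the orthogonal splitting $H = H_e \oplus H_e^\perp$, the operator $S\pi(a)S^{-1}$ has block form $\tilde\pi_e(a) \oplus 0$, which is evidently a $*$-representation. The main technical point is the identification of the oblique complement $N$ with the orthogonal complement $H_e^\perp$ via $(1-P)|_N$; once this is in hand, the similarity $S$ essentially writes itself down and converts the oblique block structure of $\pi$ into an honest orthogonal direct sum of a $*$-representation and zero.
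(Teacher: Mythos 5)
Your argument is correct and complete. The paper itself gives no proof of this lemma, deferring instead to the proof of Proposition~6 of \cite{bs}, and your decomposition $H=H_e\oplus\ker Q$ together with the identification of $\ker Q$ with $H_e^\perp$ via $(1-P)|_{\ker Q}$ is exactly the natural argument that reference runs; all the key verifications (injectivity from $\ker Q\cap\operatorname{ran}Q=\{0\}$, surjectivity via $n=(1-Q)y$, and the block form $\tilde\pi_e(a)\oplus 0$) check out. The only cosmetic point is in the forward direction: $S|_{H_e}$ exhibits $\pi_e$ as similar to a $*$-representation on the \emph{different} Hilbert space $K_e=S(H_e)$, so one should compose with a unitary $K_e\to H_e$ (which exists since the two spaces are Banach-isomorphic, hence of equal Hilbert dimension) to land back in $\mc B(H_e)$.
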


\begin{theorem}\label{thm:maindeg}
Let $\G$ be an amenable locally compact quantum group, and
let $\pi:L^1(\G)\rightarrow\mc B(H)$ be a homomorphism.
The following are equivalent:
\begin{enumerate}
\item\label{thm:maind:1} $\pi$ is similar to a $*$-representation;
\item\label{thm:maind:2} both $\pi$ and $\check\pi$ extend to completely
bounded (anti-)representations $L^1(\G)\rightarrow\mc B(H)$.
\end{enumerate}
\end{theorem}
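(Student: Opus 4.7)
The implication $(\ref{thm:maind:1}) \Rightarrow (\ref{thm:maind:2})$ is essentially Proposition~\ref{prop:three}: the argument there does not actually require $\pi$ to be non-degenerate, so if $\pi$ is similar to a $\ast$-representation then both $\pi$ and $\check\pi$ extend by continuity to completely bounded maps on $L^1(\G)$.

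For the converse, let $H_e \subseteq H$ be the essential space of $\pi$ and let $\pi_e : L^1(\G) \rightarrow \mc B(H_e)$ be the restriction. The plan is to use Lemma~\ref{lem:degen} to reduce to the non-degenerate case, which is handled by Theorem~\ref{thm:main}. By Theorem~\ref{thm:main2}, the ranges of $V_\pi$ and $V_{\check\pi}$ both equal $L^2(\G) \otimes H_e$, which translates to $\pi(\omega)H \subseteq H_e$ and $\check\pi(\omega)H \subseteq H_e$ for every $\omega \in L^1(\G)$. Thus $H_e$ is invariant under both $\pi$ and $\check\pi$; the compression $\pi_e$ is non-degenerate and completely bounded, and the restriction of $\check\pi$ to $H_e$ --- which one checks coincides with the ``check'' construction applied to $\pi_e$ itself --- is also completely bounded. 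Theorem~\ref{thm:main} then produces an invertible $T_e \in \mc B(H_e)$ for which $T_e \pi_e(\cdot) T_e^{-1}$ is a $\ast$-representation.

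It remains to exhibit an idempotent $Q \in \mc B(H)$ with range $H_e$ such that $\pi(\omega)Q = \pi(\omega)$ for every $\omega \in L^1(\G)$, so that Lemma~\ref{lem:degen} will then lift the similarity of $\pi_e$ back to $\pi$. Theorem~\ref{thm:main2} already furnishes a natural candidate at one leg up: the idempotent $P := V_\pi V_{\check\pi} \in L^\infty(\G) \vnten \mc B(H)$ has range $L^2(\G) \otimes H_e$ and acts as the identity on that range. Using the right-invariant mean $m$, I would slice away the first leg and set $Q := (m \otimes \iota)(P)$. A leg-numbering computation based on $(\Delta \otimes \iota)(V_\pi) = V_{\pi,13} V_{\pi,23}$ and on $V_\pi V_{\check\pi} V_\pi = V_\pi$ (which holds on $L^2(\G) \otimes H_e$, and hence on the whole range of $V_\pi$), combined with right-invariance of $m$, should deliver $\pi(\omega)Q = \pi(\omega)$; the properties $Q^2 = Q$ and $Q(H) = H_e$ should emerge from the same invariance manipulations together with the fact that $P$ acts as the identity on $L^2(\G) \otimes H_e$.

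The main obstacle is this last construction: slicing a general idempotent in $L^\infty(\G) \vnten \mc B(H)$ by an arbitrary state need not yield an idempotent in $\mc B(H)$, so the right-invariance of $m$ must be used in an essential way. If this direct route proves too delicate, a backup --- available whenever $L^1(\G)$ possesses a two-sided bounded approximate identity $(e_\alpha)$, as happens for instance when $\G$ is co-amenable --- is to take $Q$ to be a weak-$\ast$ cluster point of the bounded net $(\pi(e_\alpha))$ in $\mc B(H)$: then $\pi(\omega)Q = \pi(\omega)$ follows from $\omega e_\alpha \to \omega$ in norm, while $Q(H) \subseteq H_e$ because each $\pi(e_\alpha)$ already has range in $H_e$.
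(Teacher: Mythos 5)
Your first implication and your overall reduction strategy (restrict to $H_e$, apply Theorem~\ref{thm:main} to $\pi_e$, then lift back via Lemma~\ref{lem:degen}) match the paper. The gap is exactly where you flag it: the construction of the idempotent $Q$ onto $H_e$ with $\pi(\omega)Q=\pi(\omega)$ is never actually carried out. You propose $Q=(m\otimes\iota)(V_\pi V_{\check\pi})$ and assert that right-invariance of $m$ ``should deliver'' the three required properties, but no computation is given, and the mechanism you name is not the one that makes this work. Slicing the idempotent $P=V_\pi V_{\check\pi}$ by a state gives an idempotent only for structural reasons having nothing to do with invariance of $m$; and the leg-numbering identity you invoke, namely $V_{\pi,13}(V_\pi V_{\check\pi})_{23}=\big((\Delta\otimes\iota)V_\pi\big)V_{\check\pi,23}$, leaves a dangling $V_{\check\pi,23}$ factor that blocks a direct application of $\ip{\Delta(x)}{m\otimes\omega}=\ip{m}{x}\ip{1}{\omega}$. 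Your backup via a bounded approximate identity requires co-amenability of $\G$, which is strictly stronger than the amenability hypothesis of the theorem (the implication from amenability is known only for discrete $\G$), so it does not close the gap either.

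What actually makes the construction work --- and what the paper does --- is an observation about the \emph{kernel} of $P$, not an averaging argument. The proof of Theorem~\ref{thm:main2} shows not only that $\operatorname{Ran}P=L^2(\G)\otimes H_e$ but also that $\ker P=\big(L^2(\G)\otimes\tilde H_e\big)^\perp=L^2(\G)\otimes\tilde H_e^\perp$, where $\tilde H_e$ is the essential space of $\tilde\pi$. Since both the range and the kernel of the idempotent $P$ are of product form, one gets $H_e\cap\tilde H_e^\perp=\{0\}$ and $H_e+\tilde H_e^\perp=H$ directly, and may take $Q$ to be the (not necessarily orthogonal) projection onto $H_e$ along $\tilde H_e^\perp$; no invariant mean is needed at this stage (equivalently, $P=1\otimes Q$, so your formula $(m\otimes\iota)(P)$ returns this $Q$ for \emph{any} state, but you must first prove $P$ has this product form). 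The identity $\pi(\omega)Q=\pi(\omega)$ then falls out because $\alpha\in\tilde H_e^\perp$ means $(\pi(\omega^*)\alpha|\beta)=(\alpha|\tilde\pi(\omega)\beta)=0$ for all $\omega,\beta$, i.e.\ $\ker Q$ is the common kernel of all $\pi(\omega)$ --- a fact absent from your sketch. I would rework the last third of your argument along these lines rather than trying to make the averaging computation close.
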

\begin{proof}
That (\ref{thm:maind:1}) implies (\ref{thm:maind:2}) is 
again Proposition~\ref{prop:three}.  If (\ref{thm:maind:2}) holds,
then Theorem~\ref{thm:main2} shows that $V_\pi$ restricts to an invertible
operator on $L^2(\G) \otimes H_e$; that is, $V_{\pi_e}$ is invertible.
So Theorem~\ref{thm:main} shows that $\pi_e$ is similar to a $*$-representation.
By Lemma~\ref{lem:degen} it suffices to construct a projection $Q:H
\rightarrow H_e$ with $\pi(\omega) Q = \pi(\omega)$ for $\omega\in L^1(\G)$.

By Theorem~\ref{thm:main2}, $P = V_\pi V_{\check\pi}$ is a projection of
$L^2(\G)\otimes H$ onto $L^2(\G)\otimes H_e$.  The proof actually shows that
$\ker P = \ker V_\pi V_{\check\pi} = (L^2(\G)\otimes\tilde H_e)^\perp
= L^2(\G)\otimes \tilde H_e^\perp$.  Thus $L^2(\G)\otimes H_e \cap
L^2(\G)\otimes \tilde H_e^\perp = \{0\}$, and so $H_e \cap \tilde H_e^\perp
= \{0\}$.  Similarly, $L^2(\G)\otimes H_e + L^2(\G)\otimes \tilde H_e^\perp =
L^2(\G) \otimes H$, and so $H_e + \tilde H_e^\perp = H$.  It follows that there
is a projection $Q$ of $H$ onto $H_e$ with $\ker Q = \tilde H_e^\perp$.

Now, $\alpha \in \tilde H_e^\perp$ if and only if
\[ 0 = (\alpha|\tilde\pi(\omega)\beta) = (\alpha|\pi(\omega^*)^*\beta)
= (\pi(\omega^*)\alpha|\beta) \qquad (\omega\in L^1(\G), \beta\in H). \]
It follows that for $\omega\in L^1(\G), \alpha\in \ker Q$, we have that
$\pi(\omega)\alpha = 0 = \pi(\omega) Q \alpha$.  Clearly $\pi(\omega) Q
= \pi(\omega)$ on $H_e$, and thus it follows that $\pi(\omega)Q = \pi(\omega)$
on $H$, as required.
\end{proof}

\section{Special cases} \label{sec:specialcases}

In this section, we take an approach which is closer in spirit to \cite{bs}.
A cost is that we shall have to assume that $\hat\G$ is coamenable.  Recall that
$\hat\G$ is \emph{coamenable} if $L^1(\hat\G)$ has a bounded
approximate identity.  This is equivalent to a large number of other conditions,
see \cite[Theorem~3.1]{bt}.  In particular, the proof of this reference shows
that $L^1(\hat\G)$ has a contractive approximate identity in this case.  It is
conjectured that the coamenability of $\hat \G$ is equivalent to $\G$ being
amenable, but except when $\G$ is discrete (see \cite{tom}) it is only known that
$\hat\G$ coamenable implies that $\G$ is amenable.  A benefit to the approach of
this section is that in special cases (as for $\hat \G$ being a SIN group in
\cite{bs}) we can show that if $\pi:L^1(\G)\rightarrow\mc B(H)$ is completely
bounded, then $\pi$ is similar to a $*$-homomorphism,
without assumption about $\check\pi$.

\begin{theorem}\label{thm:two}
Let $\G$ be a locally compact quantum group such that $\hat\G$ is coamenable.
Let $\pi:L^1(\G)\rightarrow\mc B(H)$ be a completely bounded homomorphism,
such that $\check\pi$ extends a completely bounded map
$L^1(\G)\rightarrow\mc B(H)$.  Then $\pi$ is similar to a $*$-homomorphism.
\end{theorem}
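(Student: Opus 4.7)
By Theorem~\ref{thm:summ1}, the hypothesis that $\check\pi$ extends to a completely bounded map forces $V_\pi$ to be invertible in $L^\infty(\G)\vnten\mc B(H)$, with $V_\pi^{-1}=V_{\check\pi}$. Mirroring the strategy of Theorem~\ref{thm:main}, it suffices to produce a positive invertible operator $T\in\mc B(H)$ satisfying the invariance
\[
V_\pi^*(1\otimes T)V_\pi \;=\; 1\otimes T,
\]
since then $V:=(1\otimes T^{1/2})V_\pi(1\otimes T^{-1/2})$ is a unitary corepresentation, and the associated homomorphism $T^{1/2}\pi(\cdot)T^{-1/2}$ is a $*$-homomorphism (exactly as in the final paragraph of the proof of Theorem~\ref{thm:main}).

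A short route would be to invoke the implication ``$\hat\G$ coamenable $\Longrightarrow\G$ amenable'' recorded in the text above and then quote Theorem~\ref{thm:main}. However, the point of this section is to give a proof using the structure of $L^1(\hat\G)$ itself, so as to lay the groundwork for the refinements later in the section which drop the assumption on $\check\pi$. We therefore replace the right-invariant mean $m$ on $L^\infty(\G)$ used in Theorem~\ref{thm:main} by the contractive approximate identity $(\hat e_i)_{i\in I}\subseteq L^1(\hat\G)$ guaranteed by coamenability (\cite[Theorem~3.1]{bt}).

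The key input is Corollary~\ref{cor:cbmults}: coefficients of $V_\pi$ yield completely bounded multipliers of $L^1(\hat\G)$, and by bilinearity the same holds for coefficients of $V_\pi^*V_\pi$. Testing these multipliers against the $\hat e_i$ produces, for every $\alpha,\beta\in H$, a bounded net of scalars whose weak$^*$ cluster points define a bounded sesquilinear form on $H$ and hence an operator $T\in\mc B(H)$. The bound $V_\pi^*V_\pi\geq\varepsilon\cdot 1$, valid since $V_\pi$ is invertible, combined with the fact that $\hat\lambda(\hat e_i)\to 1$ in an appropriate weak sense, forces $T\geq\varepsilon\cdot 1$, so $T$ is automatically positive and invertible.

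The main obstacle is verifying the invariance $V_\pi^*(1\otimes T)V_\pi=1\otimes T$. This demands transferring the approximate-unit property of $(\hat e_i)$ through $\hat\lambda:L^1(\hat\G)\to C_0(\G)$ into a genuine invariance statement on the $L^\infty(\G)$-leg, using the coproduct identity $(\Delta\otimes\iota)(V_\pi^*V_\pi)=V_{\pi,23}^*V_{\pi,13}^*V_{\pi,13}V_{\pi,23}$ and the multiplier identity $(L^*\otimes\iota)(\hat W)=(1\otimes x)\hat W$ of Proposition~\ref{prop:four} applied to the coefficients at hand. This is precisely where coamenability of $\hat\G$ replaces the role of the mean's invariance in Theorem~\ref{thm:main}; once invariance of $T$ is in place, the rest of the argument from Theorem~\ref{thm:main} (conjugating by $T^{1/2}$ and checking that the resulting corepresentation is unitary) carries over verbatim.
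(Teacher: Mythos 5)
Your plan has a genuine gap at exactly the point you flag as ``the main obstacle'': the invariance identity $V_\pi^*(1\otimes T)V_\pi=1\otimes T$ is asserted to follow from coamenability via the multiplier identity of Proposition~\ref{prop:four}, but no argument is given, and it is not clear one exists along the lines you sketch. In Theorem~\ref{thm:main} that identity is derived by hitting $(\Delta\otimes\iota)(V_\pi^*V_\pi)$ with the \emph{invariant mean} $m$ in the first leg; the invariance of $m$ is used in an essential way, and a contractive approximate identity $(\hat e_i)$ of $L^1(\hat\G)$ does not directly pair with $L^\infty(\G)$ at all (the $\hat e_i$ live on the dual side, and $\hat\lambda(\hat e_i)\in C_0(\G)$ is an element of $L^\infty(\G)$, not a functional on it). The standard way to extract from $(\hat e_i)$ a right-invariant state on $L^\infty(\G)$ is precisely the proof that coamenability of $\hat\G$ implies amenability of $\G$ --- at which point you are running the ``short route'' you explicitly declined, namely quoting Theorem~\ref{thm:main} (or Theorem~\ref{thm:maindeg} in the degenerate case). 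That short route \emph{is} a correct proof of the statement; but the alternative you commit to is not completed, and as written the construction of $T$ itself (``weak$^*$ cluster points of a bounded net of scalars'') is too vague to check.

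You have also misread what this section's proof is for. The paper does not average at all here: it uses Corollary~\ref{cor:cbmults} and Theorem~\ref{thm:one} to write $(T^\pi_{\alpha,\beta})^*\hat\lambda(\hat\omega_k)=\hat\lambda(\hat\omega_k')$ with $\|\hat\omega_k'\|\le\|\alpha\|\|\beta\|\|\pi\|_{cb}\|\check\pi\|_{cb}$ for a contractive approximate identity $(\hat\omega_k)$, takes a weak$^*$ limit to obtain the coefficient bound $|(\pi(\omega)\alpha|\beta)|\le\|\alpha\|\|\beta\|\|\pi\|_{cb}\|\check\pi\|_{cb}\|\lambda(\omega)\|$, and thereby extends $\pi$ to a bounded homomorphism $\phi:C_0(\hat\G)\to\mc B(H)$ with $\phi\circ\lambda=\pi$. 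Since $\hat\G$ coamenable forces $\G$ amenable and hence $C_0(\hat\G)$ nuclear, Christensen's solution of the similarity problem for nuclear C$^*$-algebras applies to $\phi$, and $\pi$ inherits the similarity because $\lambda$ is a $*$-homomorphism on $L^1_\sharp(\G)$. It is this C$^*$-extension mechanism --- establishing the bound $|\ip{T^\pi_{\alpha,\beta}}{\omega}|\le C\|\lambda(\omega)\|$ by other means, e.g.\ traciality of the Haar state --- that later lets the paper drop the hypothesis on $\check\pi$ in Theorem~\ref{thm:Kac}; an averaging argument would not lay that groundwork, contrary to your stated motivation for avoiding the short route.
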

\begin{proof}
Let $\alpha,\beta\in H$ and $\xi, \eta \in L^2(\G)$.  From
Corollary~\ref{cor:cbmults}, $T^{\tilde\pi}_{\beta,\alpha}
\hat\lambda(\hat \omega_{\xi,\eta}) \in \lambda(L^1(\hat\G))$.  Moreover, an
application of Theorem~\ref{thm:one} to $T^{\tilde\pi}_{\beta,\alpha}$ shows that
\[ T^{\tilde\pi}_{\beta,\alpha} \hat\lambda(\hat\omega_{\xi,\eta})
= \sum_i \hat\lambda\big( \hat\omega_{a_i\xi,S(b_i^*)\eta} \big), \]
where, if $(f_i)$ is an orthonormal basis for $H$,
\[ a_i = T^{\tilde\pi}_{\beta,f_i}, \quad
S(b_i^*) = S\big( (T^{\tilde\pi}_{f_i,\alpha})^* \big)
= T^{\check\pi}_{\alpha,f_i}. \]
In particular,
\begin{align*} \sum_i \|\hat\omega_{a_i\xi,S(b_i^*)\eta}\|
&\leq \Big( \sum_{i}\|a_i\xi\|^2\Big)^{1/2} \Big( \sum_{i}\|S(b_i^*)\eta\|^2\Big)^{1/2} \\ 
&\leq \Big\| \sum_i a_i^*a_i \Big\|^{1/2} \Big\| \sum_i S(b_i^*)^*S(b_i^*)
\Big\|^{1/2} \|\xi\| \|\eta\| \\
&\leq \|\xi\| \|\eta\| \|\alpha\| \|\beta\| \|V_{\tilde\pi}\|
\|V_{\check\pi}\|
= \|\xi\| \|\eta\| \|\alpha\| \|\beta\| \|\pi\|_{cb} \|\check\pi\|_{cb}.
\end{align*}

Let $(\hat\omega_k)$ be a contractive approximate identity for $L^1(\hat\G)$.
It follows that for each $k$, there exists $\hat\omega'_k\in L^1(\hat\G)$ with
\[ (T^\pi_{\alpha,\beta})^* \hat\lambda(\hat\omega_k) =
T^{\tilde\pi}_{\beta,\alpha} \hat\lambda(\hat\omega_k)
= \hat\lambda(\hat\omega'_k) \quad\text{with}\quad
\|\hat\omega'_k\| \leq \|\alpha\| \|\beta\| \|\pi\|_{cb} \|\check\pi\|_{cb}. \]
As $\hat\lambda(L^1(\hat\G))$ is $\sigma$-weakly dense in $L^\infty(\G)$,
it follows that for $\omega\in L^1(\G)$,
\[ \overline{ \ip{T^\pi_{\alpha,\beta}}{\omega} }
= \ip{(T^\pi_{\alpha,\beta})^*}{\omega^*}
= \lim_k \ip{(T^\pi_{\alpha,\beta})^* \hat\lambda(\hat\omega_k)}{\omega^*}
= \lim_k \ip{\hat\lambda(\hat\omega'_k)}{\omega^*}
= \lim_k \ip{\lambda(\omega)^*}{\hat\omega'_k}. \]
The final equality follows by a simple calculation, see
\cite[Lemma~8.10]{dm}.  Thus
\[ |(\pi(\omega)\alpha|\beta)|
= |\ip{T^\pi_{\alpha,\beta}}{\omega}|
\leq \|\alpha\| \|\beta\| \|\pi\|_{cb} \|\check\pi\|_{cb} \|\lambda(\omega)\|. \]
As $\lambda(L^1(\G))$ is dense in $C_0(\hat\G)$, by continuity,
there is a homomorphism $\phi:C_0(\hat\G) \rightarrow \mc B(H)$ with
\[ \|\phi\| \leq \|\pi\|_{cb} \|\check\pi\|_{cb}, \quad
\phi\circ\lambda = \pi. \]

As $\hat\G$ is coamenable, it follows that $\G$ is amenable (see
\cite[Theorem~3.2]{bt}) and that hence $C_0(\hat\G)$ is a nuclear C$^*$-algebra
(see \cite[Theorem~3.3]{bt}).  The similarly problem has an affirmative answer for nuclear
$C^*$-algebras (see \cite[Theorem~4.1]{ch}) so $\phi$ is similar to a
$*$-homomorphism.  As $\lambda$ is a $*$-homomorphism on
$L^1_\sharp(\G)$, it follows that $\pi$ (restricted to $L^1_\sharp(\G)$) is
also similar to a $*$-homomorphism.

The above reference to \cite{ch} needed that $\phi$ (that is, $\pi$)
is non-degenerate.  But as $C_0(\hat\G)$ has a bounded approximate
identity, we can simply follow \cite[Proposition~6]{bs}
to deal with the case when $\pi$ is degenerate.
\end{proof}

The above proof is interesting because in special cases, it allows us
remove the hypothesis that $\check\pi$ is completely bounded (or even
bounded on all of $L^1(\G)$).  This idea was used for SIN groups in
\cite{bs}, the key point being that if $G$ is SIN, then the Plancheral
weight on $VN(G)$ can be approximated by \emph{tracial} states in $A(G)$.
It seems unlikely that many genuinely ``quantum'' groups will have this
property, and so we shall restrict attention to the compact case,
where we have many non-trivial examples.

Recall that a locally compact quantum group $\G$ is \emph{compact}
if $C_0(\G)$ is unital.  Here, the existence of Haar weights follows
from simple axioms, see \cite{woro2}.  We shall be interested in the
case when the Haar state is tracial-- by \cite[Theorem~2.5]{woro2}
this is equivalent to $\G$ being a compact Kac algebra.
See \cite{basp, wa2, wa1} for some genuinely ``quantum'' examples of
compact Kac algebras.

\begin{theorem} \label{thm:Kac}
Let $\G$ be a compact Kac algebra, and let $\pi:L^1(\G) \rightarrow
\mc B(H)$ be completely bounded homomorphism.  Then $\pi$ is similar
to a $*$-homomorphism.
\end{theorem}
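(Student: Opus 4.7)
The plan is to establish a factorization $\pi = \phi \circ \lambda$ for some bounded homomorphism $\phi: C_0(\hat\G) \rightarrow \mc B(H)$, and then conclude by a nuclearity plus Christensen argument. Specifically, since $\G$ is compact the dual $\hat\G$ is discrete, so $C_0(\hat\G) = \bigoplus^{c_0}_{\alpha \in \mathrm{Irr}(\G)} \mc B(H_\alpha)$ is a $c_0$-direct sum of matrix algebras and hence nuclear; Christensen's similarity theorem \cite{ch} gives that $\phi$ is similar to a $*$-homomorphism, and since $\lambda$ is a $*$-homomorphism on $L^1_\sharp(\G)$, which coincides with $L^1(\G)$ in the Kac case (as $S$ is everywhere defined and bounded), it follows that $\pi$ itself is similar to a $*$-homomorphism. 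The degenerate case is handled as in the proof of Theorem~\ref{thm:maindeg} or \cite[Proposition~6]{bs}.

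To produce the factorization, the goal is an estimate $|(\pi(\omega)\alpha|\beta)| \leq C\|\alpha\|\|\beta\|\|\lambda(\omega)\|$ for $\omega \in L^1(\G)$ and $\alpha, \beta \in H$, with $C$ depending only on $\|\pi\|_{cb}$. The proof of Theorem~\ref{thm:two} secured this via (i) a contractive approximate identity in $L^1(\hat\G)$, which is unavailable without coamenability of $\hat\G$ (failing, for example, for $O_N^+$ with $N \geq 3$), and (ii) the multiplier bound from Corollary~\ref{cor:cbmults}, which is controlled by $\|\check\pi\|_{cb}$. The substitute for both, in the Kac setting, is the traciality of the dual Haar weight $\hat h$ on $L^\infty(\hat\G)$.

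Concretely, for each finite subset $F \subseteq \mathrm{Irr}(\G)$, one constructs a tracial functional $\hat\omega_F \in L^1(\hat\G)$ supported on the finite block $\prod_{\alpha \in F} \mc B(H_\alpha)$; these functionals serve as a surrogate approximate identity. For such tracial $\hat\omega_F$, the multiplier element $T^{\tilde\pi}_{\beta,\alpha}\hat\lambda(\hat\omega_F)$ lies in $\hat\lambda(L^1(\hat\G))$, with preimage $\hat\omega_F'$ satisfying a bound of the form $\|\hat\omega_F'\|_{L^1(\hat\G)} \leq \|\pi\|_{cb}\|\alpha\|\|\beta\|\|\hat\omega_F\|$ in which $\|\check\pi\|_{cb}$ no longer appears, because traciality of $\hat h$ allows a cyclic rearrangement identifying left- and right-multiplication by $T^\pi_{\alpha,\beta}$ up to composition with the antipode. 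Combining this with the duality pairing as in Theorem~\ref{thm:two} and passing to the limit along an exhausting net of $F$'s then yields the desired factorization.

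The main obstacle is precisely the $\|\check\pi\|_{cb}$-free version of the multiplier estimate. In the SIN-group setting treated in \cite{bs}, the analog is the approximation of the Plancherel weight on $VN(G)$ by tracial states in $A(G)$; here the approximation is by finite-rank tracial functionals arising from the discrete block structure of $\hat\G$, and the key Kac input is that the cyclicity of $\hat h$ symmetrizes the multiplier bound of Proposition~\ref{prop:four} in a way that was not available in the general setting of Theorem~\ref{thm:two}. Making this symmetrization rigorous, and verifying that the resulting bounds survive the limit over $F$, is where the hypothesis that $\G$ is a Kac algebra truly does its work.
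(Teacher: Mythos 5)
Your overall architecture is the same as the paper's: obtain an estimate $|(\pi(\omega)\alpha|\beta)|\leq C\|\alpha\|\|\beta\|\|\lambda(\omega)\|$ with $C$ depending only on $\|\pi\|_{cb}$, factor $\pi$ through the nuclear C$^*$-algebra $C_0(\hat\G)$, and invoke Christensen's similarity theorem; and you correctly locate the crux in removing $\|\check\pi\|_{cb}$ from the multiplier bound, with traciality as the substitute. But there is a factual error in your setup. For \emph{every} compact quantum group $\G$ the dual $\hat\G$ is discrete and hence coamenable: $L^1(\hat\G)$ is not merely endowed with a contractive approximate identity, it is \emph{unital}, with unit the state $\hat\omega_{\Lambda(1)}$. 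This holds for $O_N^+$ as well -- what fails for $O_N^+$ with $N\geq 3$ is coamenability of $\G$ itself (equivalently, amenability of the discrete dual), which is not what Theorem~\ref{thm:two} requires. Consequently your surrogate approximate identity of finite-block tracial functionals $\hat\omega_F$, and the limit over an exhausting net of $F$'s, is an unnecessary detour; the paper simply evaluates at the unit $\hat\omega_{\Lambda(1)}$.

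More seriously, the one step that genuinely needs the Kac hypothesis is exactly the step you defer (``making this symmetrization rigorous\dots is where the hypothesis\dots truly does its work''), and the mechanism you sketch for it -- traciality of the dual Haar weight $\hat h$ enabling a cyclic rearrangement that interchanges left and right multiplication by $T^\pi_{\alpha,\beta}$ -- is not how the argument goes. The actual point is this: applying Theorem~\ref{thm:one} and Proposition~\ref{prop:four} with the specific vector $\eta=\Lambda(1)$, one needs only the \emph{vector} estimate $\sum_i\|S(b_i^*)\Lambda(1)\|^2=\sum_i\varphi\big(S(b_i^*)^*S(b_i^*)\big)<\infty$ (with $b_i=T^{\tilde\pi}_{f_i,\alpha}$), rather than an operator-norm bound on $\sum_i S(b_i^*)^*S(b_i^*)$, which is the quantity that would force $\|\check\pi\|_{cb}$ into the estimate. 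Since $\G$ is Kac, $S=R$ is bounded (so each $S(b_i^*)$ is defined without any hypothesis on $\check\pi$), and $\varphi\circ R=\varphi$ together with $R$ being a $*$-antiautomorphism gives $\varphi(S(b_i^*)^*S(b_i^*))=\varphi(b_i^*b_i)$; traciality of the Haar state $\varphi$ of $\G$ (not of $\hat h$) then yields $\sum_i\varphi(b_ib_i^*)=\varphi\big((\iota\otimes\omega_\alpha)(V_\pi^*V_\pi)\big)\leq\|\pi\|_{cb}^2\|\alpha\|^2$. Without this computation your proposal asserts, rather than proves, the $\|\check\pi\|_{cb}$-free bound, so as written it has a genuine gap at its central point.
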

\begin{proof}
As the dual $\hat\G$ is discrete, $L^1(\hat\G)$ is unital.  Indeed,
if $\Lambda:L^\infty(\G)\rightarrow L^2(\G)$ is the GNS map, then it
is not hard to show that the state $\hat\omega_{\Lambda(1)}$ is the
unit of $L^1(\hat\G)$.  Following the proof of Theorem~\ref{thm:two}, we wish
to show that $T^{\tilde\pi}_{\beta,\alpha} \hat\lambda(\hat\omega_{\Lambda(1)})
= \hat\lambda(\hat\omega)$ for some $\hat\omega$ whose norm is controlled.
As $S=R$ in this case, we really have to show that
\[ \sum_i \Big\| R\big( (T^{\tilde\pi}_{f_i,\alpha})^* \big)
\Lambda(1) \Big\|^2 
= \sum_i \varphi\Big( R\big( (T^{\tilde\pi}_{f_i,\alpha})^* \big)^*
R\big( (T^{\tilde\pi}_{f_i,\alpha})^* \big) \Big)
= \sum_i \varphi\Big( (T^{\tilde\pi}_{f_i,\alpha})^* T^{\tilde\pi}_{f_i,\alpha} 
\Big) < \infty. \]
where $\varphi$ is the (normal, tracial) Haar state on $L^\infty(\G)$,
which satisfies $\varphi\circ R = \varphi$.  However, as $\varphi$ is a trace,
this sum is
\[ \sum_i \varphi\Big( T^{\tilde\pi}_{f_i,\alpha}
(T^{\tilde\pi}_{f_i,\alpha})^* \Big)
= \sum_i \varphi\Big( (T^{\pi}_{\alpha,f_i})^*
T^{\pi}_{\alpha,f_i} \Big)
= \varphi\big( (\iota\otimes\omega_\alpha)(V_\pi^*V_\pi) \big)
\le \|\pi\|_{cb}^2 \|\alpha \|^2. \]
Then we can just continue as in the proof of Theorem~\ref{thm:two}.
\end{proof}

The above proof could be adapted to a general Kac algebra provided that
we can find a bounded approximate identity $(\hat\omega_{\xi_k,\eta_k})$
for $L^1(\hat\G)$ such that, for each $k$, we have that $\omega_{\eta_k}$
is tracial (by scaling $\eta_k$ and $\xi_k$, we can suppose that $\|\eta_k\|=1$
for each $k$, but we also need at least that $\sup_k \|\xi_k\|<\infty$).
In particular, we can only apply this to $\hat G$ if $G$ is a SIN group,
by using the proof of \cite[Proposition~3.2]{tay} (and so we have reproved
\cite[Theorem~20]{bs}).  However, this argument will not extend to other
classes of groups.

For a general compact quantum group, we can consider the Hopf $*$-algebra
$\mc A\subseteq C_0(\G)$ formed from the matrix coefficients of irreducible
representations.  On this algebra, we have reasonably explicit formulae
for the antipode $S$ and the Haar weight $\varphi$ (see \cite[Section~3.2]{tt}
for example).  A little calculation
shows that if, in reasonable generality, we have something like
\[ \sum_i \varphi\big( S(b_i^*)^* S(b_i^*) \big)
= \sum_i \varphi(b_ib_i^*), \]
then already $\varphi$ is a trace.  So to extend the above result to
general compact quantum groups (if it is true at all!)
probably needs a new idea.

\section{The importance of complete boundedness} \label{sec:importance_cb}

Up to this point, we have restricted our attention to \textit{completely bounded} representations $\pi:L^1(\G) \to \mc B(H)$, and derived some interesting structure results for such representations.  A natural question which arises is: what can be said about bounded representations $\pi$ which are \textit{not} completely bounded, if such representations even exist?   
In \cite{cs} Choi and the 3rd named author answered the existence part of this question by constructing a bounded, but not
completely bounded, representation $\pi:A(G)\rightarrow\mc B(H)$, for any
group $G$ which contains $\mathbb F_2$ as a closed subgroup.  In particular,
for $G=\mathbb F_2$, every completely bounded representation of $A(G)$ is
similar to a $*$-representation (\cite[Theorem~20]{bs}, see also
Theorem~\ref{thm:Kac} above) but there are bounded representations of
$A(G)$ not similar to $*$-representations.

In this section, we will consider free products of compact quantum groups,
and Khintchine inequalities for reduced free products, to construct examples of
non-commutative/non-cocommutative compact quantum groups $\G$ such that
$L^1(\G)$ admits bounded representations $\pi:L^1(\G)\rightarrow\mc B(H)$
which are not similar to $*$-representations.  To be precise, we will show
the following.

\begin{theorem}\label{thm:freeproduct}
Let $\G$ be a non-trivial compact quantum group, and let
$\widetilde{\G} =\ast_{i \in \mathbb N} \G_i$ be the compact quantum group free
product of countably many copies of $\G$ (or let $\widetilde{\G} = \mathbb T*\G$).  Then there exists a bounded representation $\pi:L^1(\widetilde{\G}) \rightarrow \mc B(H)$ which is not completely bounded.  In particular, $\pi$ is not similar to a $*$-representation.
\end{theorem}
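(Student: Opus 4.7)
The plan is to adapt \cite{cs}---which produced bounded, non-completely-bounded homomorphisms $A(G) \to \mc B(H)$ whenever $G$ contains $\mathbb F_2$---to the quantum setting. The role of the free subgroup $\mathbb F_2$ will be played by the free independence of the subalgebras $L^\infty(\G_i) \subseteq L^\infty(\widetilde\G)$, which supplies the same non-commutative Khintchine inequalities of Haagerup--Pisier--Voiculescu that drove the classical construction. The fundamental asymmetry is that those inequalities compare an operator norm to a row/column expression, but the corresponding matrix-valued (cb) version produces a strictly larger norm; this gap is what will yield boundedness without complete boundedness.

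First I would establish the free-probability input. The reduced free product $\widetilde\G$ has the property that the subalgebras $L^\infty(\G_i) \subseteq L^\infty(\widetilde\G)$ are freely independent with respect to the Haar state $\varphi$. Since $\G$ is non-trivial, I can choose a matrix coefficient $x \in L^\infty(\G)$ with $\varphi(x) = 0$ and $\varphi(x^*x) = 1$; writing $x_i \in L^\infty(\G_i) \subseteq L^\infty(\widetilde\G)$ for the corresponding copies then produces a freely independent family of centred, normalised elements. (In the $\mathbb T * \G$ case, copies of $x$ alternating with powers of the Haar unitary in the $\mathbb T$-factor play the same role.)

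Second, I would assemble a bounded homomorphism $\pi:L^1(\widetilde\G) \to \mc B(H)$ whose coefficients are controlled by the $x_i$. The natural model is a free Fock-space construction: take $H$ to be the full Fock space over $\ell^2(\mathbb N)$ and use the left creation operators $\ell_i$ together with suitably chosen scalars to define a linear map that, by the free independence of the $x_i$ and the Wick-type combinatorics of the $\ell_i$, respects the product in $L^1(\widetilde\G)$. Boundedness of $\pi$ would then follow from the free Khintchine inequality
\[ \Big\| \sum_i A_i \otimes x_i \Big\|_{\mc B(H) \vnten L^\infty(\widetilde\G)} \leq C \max\Big( \Big\|\sum_i A_i A_i^*\Big\|^{1/2}, \Big\|\sum_i A_i^* A_i\Big\|^{1/2} \Big), \]
applied to the coefficients of $\pi$ and iterated to control words of length $\geq 2$.

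Third, to rule out complete boundedness, I would test $\pi$ on carefully chosen matrix-valued elements of $L^1(\widetilde\G)$. Using the strictly larger ``row-plus-column'' bound that governs the cb-version of the above Khintchine inequality, I expect to produce a sequence of matrix amplifications whose norms tend to infinity, forcing $\|\pi\|_{cb} = \infty$. The final assertion---$\pi$ is not similar to a $*$-representation---then follows immediately from Proposition~\ref{prop:three}, since any such similarity would force $\pi$ to be completely bounded. The main obstacle is Step~2: one must verify that the Fock-space model actually defines an algebra \emph{homomorphism} on all of $L^1(\widetilde\G)$, not merely a bounded linear map, and that the Khintchine bounds are uniform enough to dominate the entire operator norm rather than just particular matrix coefficients. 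Matching the comultiplication on $L^\infty(\widetilde\G)$ (inherited from the free-product structure) with the combinatorics of products of free creation operators is where the bulk of the technical work lies.
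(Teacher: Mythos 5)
Your high-level strategy --- free independence supplies a Khintchine inequality, this is leveraged into a bounded homomorphism that cannot be completely bounded, and Proposition~\ref{prop:three} then rules out similarity to a $*$-representation --- is the right one and matches the paper's. But the two steps you yourself flag as open are exactly where all the content lies, and the route you propose through the first of them does not work as described. The paper does not build $\pi$ from creation operators on a Fock space. It fixes a non-trivial irreducible unitary corepresentation $U\in L^\infty(\G)\otimes M_d(\C)$, takes its copies $U^i$ in the free factors, and observes that $\phi:\omega\mapsto\big((\omega\otimes\iota)(U^i)\big)_{i}$ is \emph{automatically} an algebra homomorphism into $\ell^2(\mathbb N,M_d)$ with \emph{pointwise} product, simply because each $U^i$ is a corepresentation. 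Free independence (the $k=1$ case of Theorem~\ref{thm:Voic_inequality}) enters only to show that $\phi$ is bounded and surjective, i.e.\ that $L^1(\widetilde{\G})/{}^\perp X\cong\ell^2(\mathbb N,M_d)$ (Lemma~\ref{lem:norm_equivalence}, Proposition~\ref{prop:mikeone}). The passage into $\mc B(H)$ is then the elementary device of \cite{cs}: $\delta_i\mapsto\theta_{\delta_i,\delta_i+\delta_0}$, a family of rank-one idempotents with pairwise zero products, giving a bounded, bounded-below homomorphism of the pointwise-product algebra $\ell^2(\mathbb N)$. Left creation operators $\ell_i$ do not satisfy $\ell_i\ell_j=\delta_{ij}\ell_i$, so they cannot implement a homomorphism of this algebra, and the ``Wick combinatorics'' you invoke has no evident relation to the comultiplication of $\widetilde{\G}$, which on each coefficient space is just matrix multiplication of the coefficients of $U^i$. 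So Step~2, as stated, would not produce a homomorphism.

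Step~3 is likewise only an expectation, and the paper's argument is different from the one you sketch. Rather than estimating matrix amplifications of $\pi$ via an operator-valued Khintchine inequality, it argues structurally: if $\pi$ were completely bounded it would have a generator $V_\pi\in L^\infty(\widetilde{\G})\vnten\mc B(H)$; comparing $V_\pi$ with the block-diagonal unitary $U=\sum_i U^i\otimes\theta_{\delta_i,\delta_i}$ and slicing by $\omega_{\delta_0,\delta_i}$ forces $x_i:=(\iota\otimes\omega_{\delta_0,\delta_i})(V_\pi-U)=U^i$, while simultaneously $\sum_i x_i^*x_i$ must converge $\sigma$-weakly --- impossible since each $U^i$ is unitary. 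This is a short contradiction requiring no quantitative cb-norm computation. In summary: the skeleton of your proposal is correct, but both load-bearing steps are missing, and the Fock-space mechanism proposed for the first appears to be a dead end; the fix is to route the construction through the quotient algebra $\ell^2(\mathbb N,M_d)$ and the rank-one idempotents of \cite{cs}.
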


For completely bounded representations $\pi:L^1(\G) \to \mc B(H)$ (with $\pi^*$  also completely bounded), a key result in the preceding sections was Corollary   \ref{cor:cbmults}, which showed that each co-efficient operator $T^{\tilde\pi}_{\alpha, \beta} \in L^\infty(\G)$ ($\alpha,\beta \in H$) induces a completely bounded multiplier of the dual convolution algebra $L^1(\hat \G)$.  The following corollary shows that for non-completely bounded representations, there is no hope of generalising Corollary \ref{cor:cbmults}. 

\begin{corollary} \label{cor:freeproduct}
Let $\widetilde{\G}$ and $\pi:L^1(\widetilde{\G}) \rightarrow \mc B(H)$ be as in the statement and proof of Theorem \ref{thm:freeproduct}.  Then there exist vectors
$\alpha, \beta \in H$ with the property that the map $\hat \lambda(\hat \omega) \mapsto T^{\tilde\pi}_{\alpha,\beta}\hat \lambda(\hat \omega)$ $(\hat \omega \in L^1(\widehat{\widetilde{\G}}))$ does not induce a bounded left multiplier of $L^1(\widehat{\widetilde{\G}})$.
\end{corollary}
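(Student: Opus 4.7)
Plan: I argue by contradiction, running the proof of Theorem~\ref{thm:two} in reverse. Suppose that for every $\alpha,\beta\in H$, the coefficient $T^{\tilde\pi}_{\alpha,\beta}$ does induce a bounded left multiplier $L_{\alpha,\beta}$ of $L^1(\widehat{\widetilde\G})$. I will deduce that $\pi$ is completely bounded, contradicting Theorem~\ref{thm:freeproduct}.

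First apply the uniform boundedness principle. For fixed $\beta$ the linear map $\alpha\mapsto L_{\alpha,\beta}$ into the Banach space of bounded left multipliers has closed graph: if $\alpha_n\to\alpha$ and $L_{\alpha_n,\beta}\to L$ in norm, then for each $\hat\omega\in L^1(\widehat{\widetilde\G})$,
\[
\hat\lambda(L\hat\omega)=\lim_n\hat\lambda(L_{\alpha_n,\beta}\hat\omega)=\lim_n T^{\tilde\pi}_{\alpha_n,\beta}\hat\lambda(\hat\omega)=T^{\tilde\pi}_{\alpha,\beta}\hat\lambda(\hat\omega),
\]
and injectivity of $\hat\lambda$ forces $L=L_{\alpha,\beta}$. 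Symmetry in $\beta$ and the bilinear uniform boundedness principle yield a constant $C>0$ with $\|L_{\alpha,\beta}\|\le C\|\alpha\|\|\beta\|$ for all $\alpha,\beta\in H$.

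Next I would use compactness of $\widetilde\G$ to reduce $\pi$ to a representation of $C_0(\widehat{\widetilde\G})$. Since $\widetilde\G$ is compact, $L^1(\widehat{\widetilde\G})$ is unital with unit $\hat e$ satisfying $\hat\lambda(\hat e)=1$ (cf.\ the proof of Theorem~\ref{thm:Kac}); setting $\hat\omega_{\alpha,\beta}:=L_{\alpha,\beta}(\hat e)$ gives $\hat\lambda(\hat\omega_{\alpha,\beta})=T^{\tilde\pi}_{\alpha,\beta}$ with $\|\hat\omega_{\alpha,\beta}\|\le C\|\hat e\|\|\alpha\|\|\beta\|$. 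Using the standard duality between $\lambda$ and $\hat\lambda$ (both being slices of the unitary $W$), the matrix coefficients of $\pi$ pair $\lambda(\omega)$ against functionals in $L^1(\widehat{\widetilde\G})$ of controlled norm, yielding $\|\pi(\omega)\|\le C'\|\lambda(\omega)\|_{L^\infty(\widehat{\widetilde\G})}$ for some $C'>0$. Hence $\pi$ extends to a bounded homomorphism $\phi:C_0(\widehat{\widetilde\G})\to\mc B(H)$ with $\phi\circ\lambda=\pi$. Since $\widehat{\widetilde\G}$ is discrete, $C_0(\widehat{\widetilde\G})$ is a $c_0$-direct sum of matrix algebras and hence nuclear; Christensen's similarity theorem \cite[Theorem~4.1]{ch} (as invoked in Theorem~\ref{thm:two}) makes $\phi$ similar to a $*$-homomorphism and therefore completely bounded (possible degeneracy of $\phi$ is handled exactly as in the final paragraph of the proof of Theorem~\ref{thm:two}). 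Complete boundedness of $\lambda$ (as a slice of the unitary $W$) then makes $\pi=\phi\circ\lambda$ completely bounded, contradicting Theorem~\ref{thm:freeproduct}.

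The argument introduces no fundamentally new difficulties beyond Theorem~\ref{thm:two}; the main points requiring care are the closed-graph verification in the first step (depending on injectivity of $\hat\lambda$ and norm continuity of $(\alpha,\beta)\mapsto T^{\tilde\pi}_{\alpha,\beta}$) and the sign/conjugate bookkeeping in the duality pairing underlying the bound $\|\pi(\omega)\|\le C'\|\lambda(\omega)\|$.
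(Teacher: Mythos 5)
Your argument is correct and follows essentially the same route as the paper's proof: assume every coefficient induces a bounded multiplier, obtain a uniform bound $\|L_{\alpha,\beta}\|\le C\|\alpha\|\|\beta\|$ via a closed-graph/uniform-boundedness argument, use the unit of $L^1(\widehat{\widetilde{\G}})$ to realise each $T^{\tilde\pi}_{\alpha,\beta}$ as $\hat\lambda$ of a norm-controlled element, extend to a bounded homomorphism of the nuclear C$^*$-algebra $C_0(\widehat{\widetilde{\G}})$, and conclude that $\pi$ is completely bounded, contradicting Theorem~\ref{thm:freeproduct}. The only cosmetic differences are that the paper applies the closed graph theorem directly to the sesquilinear map $(\alpha,\beta)\mapsto\hat\omega^{\tilde\pi}_{\alpha,\beta}\in L^1(\widehat{\widetilde{\G}})$ rather than to the multiplier operators, and extends $\tilde\pi$ to an anti-representation rather than $\pi$ to a representation.
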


\begin{proof}[Proof of Corollary \ref{cor:freeproduct}]
We use an argument similar to the proof of
\cite[Theorem 2.3]{dh}.  Suppose, to get a contradiction, that each co-efficient
$T^{\tilde\pi}_{\alpha,\beta}$ induces a multiplier of $L^1(\widehat{\widetilde{\G}})$, as
in Corollary~\ref{cor:cbmults}.
Then for each $\alpha, \beta \in H$, as $L^1(\widehat{\widetilde{\G}})$ is unital,
there exists a unique $\hat\omega^{\tilde\pi}_{\alpha, \beta} \in L^1(\widehat{\widetilde{\G}})$ such that
$T^{\tilde\pi}_{\alpha, \beta} =\hat\lambda(\hat\omega^{\tilde\pi}_{\alpha, \beta})$.  Define a
sesquilinear map $Q:H \times H \to L^1(\widehat{\widetilde{\G}})$ by setting
$Q(\alpha, \beta) = \hat\omega^{\tilde\pi}_{\alpha, \beta}$.  We claim that $Q$ is separately
continuous.  To see this, fix $\beta \in H$ and suppose
$\lim_{n \to \infty}\|\alpha - \alpha_n\| = 0$ and
$\lim_{n\to \infty}Q(\alpha_n,\beta) = \hat \omega \in L^1(\widehat{\widetilde{\G}})$.
Then for any $\omega \in L^1(\widetilde{\G})$, 
\begin{align*}
\ip{\hat\lambda(\hat\omega)}{\omega} &=
\lim_{n\rightarrow\infty} \ip{\hat\lambda(Q(\alpha_n,\beta))}{\omega}
= \lim_{n\rightarrow\infty} \ip{T^{\tilde\pi}_{\alpha_n,\beta}}{\omega}
= \lim_{n\rightarrow\infty} \big({\tilde\pi}(\omega) \alpha_n \big| \beta \big) \\
&= \big({\tilde\pi}(\omega) \alpha \big| \beta \big)
= \ip{T^{\tilde\pi}_{\alpha,\beta}}{\omega}
= \ip{\hat\lambda(Q(\alpha,\beta))}{\omega}.
\end{align*}
Therefore $\hat\lambda(\hat\omega) = \hat\lambda(Q(\alpha,\beta))$, giving $\hat\omega = Q(\alpha,\beta)$,
and the closed graph theorem implies that $Q$ is continuous in the first variable.
The same argument applies to the second variable, proving the claim.  

Since any separately continuous sesquilinear map is bounded
(see for example \cite[Theorem 2.17]{ru}), there is a constant $D>0$ such that
$\|\hat\omega^{\tilde\pi}_{\alpha,\beta}\|_{L^1(\widehat{\widetilde{\G}})} \le D\|\alpha\|\|\beta\|$
for all $\alpha, \beta \in H$.  As a consequence, we have
\begin{align*}
|\big( {\tilde\pi}(\omega)\alpha|\beta\big)| &=
| \ip{\hat\lambda(\hat \omega^{\tilde\pi}_{\alpha, \beta})}{\omega} |
= |\ip{\lambda(\omega^*)^*}{\hat \omega^{\tilde\pi}_{\alpha,\beta}}| \\
&\leq \| \hat \omega^{\tilde\pi}_{\alpha,\beta}\| \|\lambda(\omega^*)^*\|
\leq D \|\alpha\| \|\beta\| \|\lambda(\omega^*)\| \qquad (\omega \in L^1(\widetilde{\G})),
\end{align*}
showing that ${\tilde\pi}$ extends to the bounded anti-representation
$\sigma:C_0(\widehat{\widetilde{\G}})\to \mc B(H)$ determined by
$\sigma \circ\lambda = \tilde\pi$.  Since $\widetilde{\G}$ is compact, $C_0(\widehat{\widetilde{\G}})$ is nuclear, and it follows that $\sigma$ is automatically completely bounded.  But then $\|\pi\|_{cb} = \|\tilde\pi\|_{cb} = \|\sigma \circ \lambda\|_{cb} \le \|\sigma\|_{cb}< \infty$, contradicting the fact that $\pi$ is not completely bounded.
\end{proof}

\subsection{Reduced free products}

Before proceeding with the proof of Theorem \ref{thm:freeproduct}, let us recall the construction of the reduced free product of a collection
of C$^*$-algebras equipped with distinguished states.  For further details
see \cite{av}, \cite{vo} or the introduction to \cite{rx}.  Let $I$ be an
index set, and for each $i\in I$ let $A_i$ be a C$^*$-algebra (assumed unital),
and let $\varphi_i$ be a state on $A_i$ with faithful GNS construction
$(\pi_i, H_i, \xi_i)$.  Let $D_I$ be the collection of all ``words''
in $I$, that is, tuples $(i_1,\cdots,i_n)$ with $i_j \not= i_{j+1}$ for each
$j$; we allow the empty word $\emptyset$.
Set $H_i^0 = \xi_i^\perp \subseteq H_i$, and let $H$ be the Hilbert space
direct sum of $H_{i_1}^0 \otimes \cdots \otimes H_{i_n}^0$ as $(i_j)$ varies
through $D_I$; we interpret the space for $\emptyset\in D_I$ as being a copy of
$\mathbb C$.  Thus $H$ is some sort of generalised Fock space construction.
Let $\Omega\in H$ be the vector $1\in\mathbb C$.

For each $i\in I$ there is an obvious isomorphism
\begin{align*} H &\cong (\mathbb C \oplus H_i^0) \otimes
\Big( \bigoplus \big\{ H_{i_1}^0 \otimes\cdots\otimes H_{i_n}^0 :
(i_j)\in D_I, i_1\not=i \big\} \Big) \\
&\cong H_i \otimes
\Big( \bigoplus \big\{ H_{i_1}^0 \otimes\cdots\otimes H_{i_n}^0 :
(i_j)\in D_I, i_1\not=i \big\} \Big). \end{align*}
Let $U_i$ be the unitary implementing this isomorphism, and define
$\hat\pi_i:A_i\rightarrow\mc B(H); a\mapsto U_i^*(\pi_i(a)\otimes 1)U_i$,
a faithful $*$-representation of $A_i$ on $H$.  A description of
$U_i$ is given in \cite{av}, but let us just note that if $a_i\in A_i$ with
$\varphi_i(a_i)=0$, then $\hat\pi_i(a_i)\Omega = a_i\xi_i \in H_i^0$.

Let $\mc A$ be the algebraic free product of the $(A_i)$,
amalgamated over units.  Then $\pi = \ast_{i\in I} \hat\pi_i$ is a faithful
$*$-representation of $\mc A$ on $\mc B(H)$.  By definition, the reduced
free product $\ast_{i\in A} A_i$ is the closure of $\pi(\mc A)$.  The vector
state induced by $\Omega$ is the free product of the states, denoted
$\ast_{i\in I} \varphi_i$.  Notice that $(\pi,H,\Omega)$ is the GNS
construction for $\ast_{i\in I} \varphi_i$.
Denote by $A$ the full free product, which is
the completion of $\mc A$ under the norm given by considering all
$*$-representations of $A_i$ on a common Hilbert space.

If each $A_i$ were a von Neumann algebra, with normal states $\varphi_i$,
then the (reduced) von Neumann algebraic free product is simply the
weak operator closure of $\ast_{i\in I} A_i$ in $\mc B(H)$.

In \cite{wa1} Wang studied free products of compact quantum groups
(see also \cite[Section~6.3]{tt}).
If for each $i$ we have a compact quantum group $\G_i$, then we form
$\G = \ast_{i\in I} \G_i$ as follows.  Let $C_0(\G)$ be the
reduced free product of $(C_0(\G_i),\varphi_i)$, where $\varphi_i$ is
the Haar state.  For each $i$ let $\iota_i:C_0(\G_i) \rightarrow A$ be the
inclusion, and let $\Delta_i$ be the coproduct on $C_0(\G_i)$.
Set $\rho_i = (\iota_i\otimes\iota_i)\Delta_i: C_0(\G_i) \rightarrow
A\otimes A$.  By the universal property of $A$, there is a
$*$-homomorphism $\Delta: A\rightarrow A\otimes A$ such that $\Delta\iota_i
= \rho_i$ for each $i$.  It is easy to see that $\Delta$ is
coassociative, and by using the corepresentation theory of compact
quantum groups, one can verify the density conditions to show that
$(A,\Delta)$ is a compact quantum group.

Then \cite[Theorem~3.8]{wa1} shows that the Haar state on $(A,\Delta)$ is
just the free product of the Haar states on each $C_0(\G_i)$.  It follows
that the reduced version of $A$ is just $C_0(\G)$, and thus
$\Delta$ drops to a coproduct on $C_0(\G)$.

We require the following non-commutative Khintchine inequality 
describing the operator space structure of the linear span of a family
of centred freely independent operators.  See \cite[Proposition~7.4]{ju}
or the introduction to \cite{rx}.  In fact, we shall only use the $k=1$
case, which is attributed to Voiculescu \cite{Voi}.

\begin{theorem}\label{thm:Voic_inequality}
Let $I$ be an index set, and for each $i \in I$, let $A_i$ be a von Neumann
algebra with faithful normal state $\varphi_i$.  Let
$(A,\varphi) = (\ast_{i \in I}A_i, \ast_{i \in I} \varphi_i)$ be their
von Neumann algebraic free product.  Then for any $x_i \in A_i$ with
$\varphi_i(x_i)=0$, and any finitely supported family
$\{a_i\}_{i \in I} \in M_k(\C)$ ($k \in \mathbb N$), we have
\begin{align*}
&\max \Big\{\max_{i \in I}\|a_i \otimes x_i\|_{M_k(\C) \otimes A},
\Big\|\sum_{i \in I} a_i^*a_i \varphi_i(x_i^*x_i)\Big\|_{M_k(\C)}^{1/2},
\Big\|\sum_{i \in I} a_ia_i^* \varphi_i(x_ix_i^*)\Big\|_{M_k(\C)}^{1/2} \Big\} \\
&\leq \Big\|\sum_{i \in I} a_i \otimes x_i\Big\|_{M_k(\C) \otimes A} \\
&\leq 3 \max \Big\{\max_{i \in I}\|a_i \otimes x_i\|_{M_k(\C) \otimes A},
\Big\|\sum_{i \in I} a_i^*a_i \varphi_i(x_i^*x_i)\Big\|_{M_k(\C)}^{1/2},
\Big\|\sum_{i \in I} a_ia_i^* \varphi_i(x_ix_i^*)\Big\|_{M_k(\C)}^{1/2} \Big\}.
\end{align*}
\end{theorem}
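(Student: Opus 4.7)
The theorem is essentially a citation from \cite{Voi,ju,rx}, so the ``proof'' I would write is mostly a pointer to those references with enough indication that the result is plausible. Nevertheless, let me sketch how the two directions would go and what the obstacle is.

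For the \emph{lower bound}, I would write $X = \sum_{i} a_i \otimes x_i \in M_k(\C) \otimes A$ and estimate three separate quantities. The bound $\|a_i \otimes x_i\| \le \|X\|$ is immediate if one can produce a completely contractive projection $M_k(\C) \otimes A \to M_k(\C) \otimes A_i$; in the compact quantum group applications this comes from the conditional expectation onto $A_i$ that exists because $\varphi = \ast_j\varphi_j$ restricts correctly on each factor. For the column and row squares, apply $(\iota \otimes \varphi)$ to $X^*X$ and $XX^*$. The key input is freeness: since each $x_i$ is centred and distinct indices are free with respect to $\varphi$, the defining alternating-product rule gives $\varphi(x_j^* x_i) = 0$ whenever $i \ne j$, and the diagonal terms collapse to $\varphi_i(x_i^* x_i)$. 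Hence
\[ \Big\|\sum_i a_i^* a_i\, \varphi_i(x_i^* x_i)\Big\|_{M_k(\C)} = \|(\iota\otimes\varphi)(X^*X)\|_{M_k(\C)} \le \|X^*X\| = \|X\|^2, \]
and symmetrically for the row square.

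The \emph{upper bound} is the substantive part and is where I would simply invoke the literature. In the scalar ($k=1$) case, Voiculescu's argument realises $A$ on the free Fock space $H = \bigoplus H_{i_1}^0 \otimes \cdots \otimes H_{i_n}^0$ exactly as recalled in the paper, and decomposes each mean-zero $x_i$ into its action on vectors whose first letter is of type $i$ versus those whose first letter is not. The second type of piece behaves like a free creation operator, and the standard free-Fock estimate for such operators yields the column/row square terms, while the first type of piece gives the $\max_i\|a_i\otimes x_i\|$ term. The three summands therefore dominate $\|X\|$, with an explicit constant of $3$ (after a careful bookkeeping on the overlap of the two decompositions). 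For general $k$, one needs the operator-valued refinement of this argument, essentially the one in \cite[Prop.~7.4]{ju} or \cite{rx}, where the free Fock construction is carried out over $M_k(\C)$ rather than $\C$.

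The main obstacle is thus the upper bound for $k > 1$: the naive tensorisation of Voiculescu's scalar proof loses the constant, and the correct statement requires the $M_k(\C)$-valued free probabilistic machinery developed by Junge and by Ricard--Xu. Since the present paper only needs $k=1$ (it is applied via the Voiculescu form to single scalar coefficients), I would write the proof as follows: give the freeness computation above for the lower bound in one short paragraph, then note that the upper bound for $k=1$ is \cite[Theorem~3.8]{Voi} (or whatever numbering there) via the free Fock space representation, and that the operator-valued extension needed for general $k$ is \cite[Proposition~7.4]{ju}. No new idea is required here beyond citing these works correctly.
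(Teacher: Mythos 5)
The paper offers no proof of this statement at all---it is quoted directly from the literature, with pointers to \cite[Proposition~7.4]{ju}, the introduction of \cite{rx}, and \cite{Voi} for the $k=1$ case---so your proposal, which likewise defers the substantive upper bound to those references, takes essentially the same approach. Your supplementary sketch of the lower bound (complete contractivity of the $\varphi$-preserving conditional expectations onto the free factors, plus the freeness computation $\varphi(x_i^*x_j)=\delta_{ij}\,\varphi_i(x_i^*x_i)$ applied to $(\iota\otimes\varphi)(X^*X)$ and $(\iota\otimes\varphi)(XX^*)$) is correct, and goes beyond what the paper records.
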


Now let $\G$ be a non-trivial compact quantum group, set $\G_i=\G$ for each
$i\in\mathbb N$, and let $\widetilde{\G} = \ast_{i \in \mathbb N}\G_i$ be the free product.  (We will show how to modify the following arguments to address the case of $\widetilde{\G} = \mathbb T*\G$ in Remark \ref{rem:torus}.) Fix once and for all a non-trivial irreducible
unitary representation $U \in L^\infty(\G) \otimes M_{d}(\C)$ and for each
$i \in \mathbb N$, let $U^i \in L^\infty(\widetilde{\G}) \otimes M_d(\C)$ be the
irreducible unitary representation of $\widetilde{\G}$ induced by the inclusion
$L^\infty(\G_i) \hookrightarrow L^\infty(\widetilde{\G})$.  Note that the $U^i$'s
are pairwise inequivalent representations.  Finally, let
$L^\infty_i = \{(\iota \otimes \rho) U^i : \rho \in M_d(\C)^*\}$
be the co-efficient space of $U^i$ and let $X$ be the weak$^*$-closed linear
span of all the $L^\infty_i$'s.

\begin{lemma}\label{lem:norm_equivalence}
There is a constant $C > 0$ such that
\[ \| x\Omega \|_{L^2(\widetilde{\G})} \leq
\|x\|_{L^\infty(\widetilde{\G})} \leq
C\| x\Omega \|_{L^2(\widetilde{\G})} \qquad (x \in X).\] 
\end{lemma}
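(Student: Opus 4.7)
The plan is to prove the two inequalities separately. The first inequality $\|x\Omega\|_{L^2} \leq \|x\|_{L^\infty}$ is immediate from the fact that $\Omega$ is a unit vector on which $L^\infty(\widetilde{\G})$ acts by left multiplication. The substance of the statement is the reverse inequality, which I would attack in two stages: first establish it on the algebraic span $X_0$ of the $L^\infty_i$, and then extend it to the full weak-$^*$ closure $X$.

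For the first stage, let $x = \sum_{i=1}^N x_i$ with $x_i \in L^\infty_i$. Because $U^i$ is a non-trivial irreducible unitary corepresentation, the Woronowicz/Peter--Weyl orthogonality relations force $\varphi_i(x_i) = 0$ for every coefficient $x_i$ of $U^i$. I would then invoke the $k=1$ case of Theorem~\ref{thm:Voic_inequality} to obtain
\[
\|x\|_{L^\infty(\widetilde{\G})} \leq 3\max\Big\{\max_i \|x_i\|_\infty,\ \big(\textstyle\sum_i\varphi_i(x_i^*x_i)\big)^{1/2},\ \big(\sum_i\varphi_i(x_ix_i^*)\big)^{1/2}\Big\}.
\]
Next, since the inclusion $L^\infty(\G_i) \hookrightarrow L^\infty(\widetilde{\G})$ preserves the Haar state (by \cite[Theorem~3.8]{wa1}) and each $L^\infty_i$ is a $d^2$-dimensional subspace canonically isomorphic (in both its operator norm and its $L^2$-structure) to the coefficient space of $U$ in $L^\infty(\G)$, equivalence of norms on finite-dimensional spaces yields constants $C_1, C_2 > 0$ independent of $i$ with $\|x_i\|_\infty \leq C_1\varphi_i(x_i^*x_i)^{1/2}$ and $\varphi_i(x_ix_i^*) \leq C_2\,\varphi_i(x_i^*x_i)$ for every $x_i \in L^\infty_i$. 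Finally, the free-product Fock-space description of $L^2(\widetilde{\G})$ places each $x_i\Omega$ in the pairwise-orthogonal subspace $H_i^0$, giving $\|x\Omega\|^2 = \sum_i\varphi_i(x_i^*x_i)$. Combining these three facts bounds each of the three quantities on the right of Voiculescu's inequality by a uniform multiple of $\|x\Omega\|$, yielding $\|x\|_\infty \leq C\|x\Omega\|$ on $X_0$ with $C = 3\max\{C_1,1,\sqrt{C_2}\}$.

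For the second stage, the issue is that weak-$^*$ convergence does not respect $L^2$-norms in an obvious way. Given $x \in X$, I would define components $x_i \in L^\infty_i$ by letting $x_i\Omega$ be the orthogonal $L^2$-projection of $x\Omega$ onto the finite-dimensional subspace $L^\infty_i\Omega$. Approximating $x$ in the weak-$^*$ topology by a net $y_\alpha \in X_0$ and using that $y \mapsto y\Omega$ is weak-$^*$-to-weak-$L^2$ continuous, one sees that $x\Omega$ lies in the $L^2$-closure of $\bigoplus_i L^\infty_i\Omega$, so $x\Omega = \sum_i x_i\Omega$ with $\|x\Omega\|^2 = \sum_i\varphi_i(x_i^*x_i)$. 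The partial sums $s_N = \sum_{i\leq N} x_i \in X_0$ therefore satisfy $\|s_N\|_\infty \leq C\|s_N\Omega\| \leq C\|x\Omega\|$ by the first stage. By weak-$^*$ compactness of the norm-bounded ball, a subnet converges weak-$^*$ to some $x' \in L^\infty(\widetilde{\G})$ with $\|x'\|_\infty \leq C\|x\Omega\|$, and $x'\Omega = x\Omega$ in $L^2$. Since the Haar state is faithful on the reduced algebra $L^\infty(\widetilde{\G})$, the vector $\Omega$ is separating, giving $x = x'$ and completing the proof.

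The main obstacle will be the weak-$^*$ extension argument, since Voiculescu's inequality is naturally a statement about finitely supported families and $X$ contains elements which are not norm limits of finite sums. Everything else is a uniform application of the Khintchine/Voiculescu inequality, Peter--Weyl orthogonality, and finite-dimensional norm equivalence; the delicate point is reconciling the two different topologies with the faithfulness of $\Omega$.
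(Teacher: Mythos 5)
Your proof is correct and, for the main estimate, follows the same route as the paper: the $k=1$ Khintchine/Voiculescu inequality, vanishing of the Haar state on the coefficients of the non-trivial irreducible $U$, orthogonality of the subspaces $L^\infty_i\Omega$ in the free-product Fock space, and uniform finite-dimensional norm equivalence on the mutually isomorphic coefficient spaces $L^\infty_i$, arriving at the same constant $C = 3\max\{C_1,C_2\}$. The paper simply writes a general $x \in X$ as an ``$L^2$-convergent series'' $x = \sum_i x_i$ and applies the finitely-supported inequality to it directly, leaving implicit exactly the point you identify as the main obstacle; your second stage --- defining $x_i$ by orthogonal projection of $x\Omega$ onto $L^\infty_i\Omega$, bounding the partial sums $s_N$ by $C\|x\Omega\|$ via the first stage, extracting a weak$^*$ cluster point $x'$ with $x'\Omega = x\Omega$, and using that $\Omega$ is separating (faithfulness of the free-product Haar state on the reduced algebra) to conclude $x' = x$ --- is a legitimate way to supply the missing justification. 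In short, your argument is a correct and somewhat more careful rendering of the paper's proof rather than a genuinely different one.
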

\begin{proof}
The lower bound is immediate.  To get the upper bound, fix $x \in X$ and write
$x$ as an $L^2$-convergent series $x=\sum_{i \in \mathbb N}x_i \in X$ where each
$x_i$ is a member of $L^\infty_i$.  As $U$ is non-trivial, the Haar
state annihilates all coefficients of $U$, and so we can apply the
$k=1$ version of Theorem~\ref{thm:Voic_inequality} to see that
\begin{align*}\|x\|_{L^\infty(\widetilde{\G})} &\leq
3 \max\Big\{ \max_i \|x_i\|_{L^\infty(\widetilde{\G})},
  \Big(\sum_{i \in \mathbb N} \varphi_i(x_i^*x_i)\Big)^{1/2},
  \Big(\sum_{i \in \mathbb N} \varphi_i(x_ix_i^*)\Big)^{1/2}\Big\} \\
&= 3 \max\Big\{ \max_i \|x_i\|_{L^\infty_i},
\| x\Omega \|_{L^2(\widetilde{\G})},
\Big(\sum_{i \in \mathbb N} \| x_i^*\xi_i \|_{L^2(\G_i)}^2\Big)^{1/2}\Big\}.
\end{align*}
Since the spaces $L^\infty_i$ are all finite dimensional and
isometrically isomorphic, there exist constants $C_1, C_2 > 0$ such that
\[ \|x_i\|_{L^\infty_i} \leq C_1 \| x_i\xi_i \|_{L^2(\G_i)}
\quad \text{and} \quad
\| x_i^*\xi_i \|_{L^2(\G_i)} \leq C_2 \| x_i\xi_i \|_{L^2(\G_i)}
\qquad (i \in \mathbb N).\]
The lemma now follows by taking $C = 3 \max\{C_1, C_2\}$. 
\end{proof}

\begin{lemma}
The pre-annihilator ${}^\perp X$ is a closed two-sided ideal in
$L^1(\widetilde{\G})$.
\end{lemma}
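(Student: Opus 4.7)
The plan is to verify directly that ${}^\perp X$ is closed under both left and right multiplication in $L^1(\widetilde{\G})$; norm-closedness is automatic, as any preannihilator is weak$^*$-closed. Recall that for $\omega, \omega' \in L^1(\widetilde{\G})$ and $x \in L^\infty(\widetilde{\G})$,
\[ \langle x, \omega'\omega\rangle = \langle L_{\omega'}(x), \omega\rangle, \qquad \langle x, \omega\omega'\rangle = \langle R_{\omega'}(x), \omega\rangle, \]
where $L_{\omega'}(x) = (\omega' \otimes \iota)\Delta(x)$ and $R_{\omega'}(x) = (\iota \otimes \omega')\Delta(x)$. Thus it suffices to show that, for every $\omega' \in L^1(\widetilde{\G})$, both $L_{\omega'}$ and $R_{\omega'}$ leave $X$ invariant.

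The key step is a coefficient calculation on each $L^\infty_i$. Writing $U^i = \sum_{k,l} u^i_{kl} \otimes e_{kl}$ with $e_{kl}$ the matrix units of $M_d(\C)$, the corepresentation identity $(\Delta \otimes \iota)(U^i) = U^i_{13} U^i_{23}$ yields
\[ \Delta(u^i_{kl}) = \sum_m u^i_{km} \otimes u^i_{ml} \;\in\; L^\infty_i \otimes L^\infty_i. \]
Slicing then gives $L_{\omega'}(u^i_{kl}), R_{\omega'}(u^i_{kl}) \in L^\infty_i \subseteq X$, and by linearity $L_{\omega'}$ and $R_{\omega'}$ map the algebraic span $X_0$ of the $L^\infty_i$'s into $X$. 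Since $\Delta$ is normal and slicing against a normal functional is normal, both $L_{\omega'}$ and $R_{\omega'}$ are weak$^*$-weak$^*$ continuous; as $X$ is the weak$^*$-closure of $X_0$ and is itself weak$^*$-closed, the inclusion extends to $L_{\omega'}(X), R_{\omega'}(X) \subseteq X$, as required. The only subtle point is this extension by weak$^*$-continuity, but it is immediate once the algebraic version on $X_0$ is in hand, so no real obstacle arises.
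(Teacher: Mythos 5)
Your proof is correct and follows essentially the same route as the paper: both arguments reduce to the corepresentation identity $(\Delta\otimes\iota)(U^i)=U^i_{13}U^i_{23}$, which shows that slicing $\Delta(x)$ against a normal functional sends a coefficient of $U^i$ back into the coefficient space $L^\infty_i$. The paper phrases this with functionals $\rho\in M_d(\C)^*$ and verifies $\ip{x}{\omega\omega'}=0=\ip{x}{\omega'\omega}$ directly on the generators (handling the weak$^*$-closure implicitly, since ${}^\perp X$ equals the pre-annihilator of the algebraic span), whereas you phrase it via matrix entries and the maps $L_{\omega'},R_{\omega'}$ and make the weak$^*$-continuity extension explicit --- a cosmetic difference only.
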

\begin{proof}
By linearity, it is enough to show that if $\omega\in{}^\perp X$ and
$x = (\iota \otimes \rho) U^i$ for some $i\in\mathbb N$
and $\rho\in M_d(\C)^*$, then $\ip{x}{\omega\omega'} = 0 = 
\ip{x}{\omega'\omega}$ for $\omega'\in L^1(\widetilde{\G})$.  However,
\[ \ip{x}{\omega\omega'} = \ip{\Delta(x)}{\omega\otimes\omega'}
= \ip{U^i_{13} U^i_{23}}{\omega\otimes\omega'\otimes\rho}
= \ip{U^i}{\omega\otimes (\omega'\otimes\iota)(U^i)\rho} = 0, \]
as $\rho'=(\omega'\otimes\iota)(U^i)\rho\in M_d(\C)^*$ and
$(\iota\otimes\rho')(U^i)\in X$.  Similarly, $\ip{x}{\omega'\omega}=0$.
\end{proof}

Consider now the space $\ell^2(\mathbb N,M_d)$, which is a Banach algebra
under pointwise operations.

\begin{proposition}\label{prop:mikeone}
The map
\[ \phi: L^1(\widetilde{\G}) \rightarrow \ell^2(\mathbb N,M_d); \quad
\omega \mapsto \big( (\omega\otimes\iota)(U^i) \big)_{i\in\mathbb N} \]
is well-defined, bounded, and is an algebra homomorphism.  Furthermore,
$\phi$ drops to give a isomorphism between $L^1(\widetilde{\G})/{}^\perp X$
and $\ell^2(\mathbb N,M_d)$.
\end{proposition}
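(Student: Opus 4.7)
The plan is to prove the proposition in three pieces: the homomorphism property by direct corepresentation calculation, then a Banach-space identification of $X$ using $L^2$-orthogonality of the co-efficient spaces $L^\infty_j\Omega$, and finally the quotient isomorphism by standard predual duality.

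For the homomorphism property: for $\omega,\omega'\in L^1(\widetilde{\G})$, the corepresentation identity $(\Delta\otimes\iota)(U^i)=U^i_{13}U^i_{23}$ gives
\[\phi(\omega\omega')_i = (\omega\otimes\omega'\otimes\iota)(U^i_{13}U^i_{23}) = (\omega\otimes\iota)(U^i)\cdot(\omega'\otimes\iota)(U^i) = \phi(\omega)_i\phi(\omega')_i,\]
so $\phi$ intertwines convolution on $L^1(\widetilde{\G})$ with pointwise multiplication in $\ell^2(\mathbb N,M_d)$.

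For the Banach-space structure of $X$, I claim $X$ is isomorphic (as a Banach space) to the $\ell^2$-direct sum $\bigoplus^{\ell^2}_j L^\infty_j$. The subspaces $L^\infty_j\Omega\subset L^2(\widetilde{\G})$ are mutually orthogonal for distinct $j$: since $U$ is non-trivial, every element of $L^\infty_j$ is centred for the $j$-th factor Haar state $\varphi_j$, so the defining freeness property of the reduced free product state $\varphi = \ast_j\varphi_j$ forces $\varphi(x_i^*x_j)=0$ whenever $x_i\in L^\infty_i$, $x_j\in L^\infty_j$ and $i\neq j$. Combined with Lemma~\ref{lem:norm_equivalence} (equivalence of the $L^\infty$- and $L^2$-norms on $X$), the map $x\mapsto x\Omega$ realises $X$ as an $\ell^2$-direct sum of the finite-dimensional pieces $L^\infty_j$.

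For the final piece: each $L^\infty_j$ is isomorphic to $M_d(\C)$ via $\rho\mapsto(\iota\otimes\rho)(U^j)$ on $\rho\in M_d^*$ (surjective by definition, injective by irreducibility of $U^j$), and the norm equivalences are uniform in $j$ because every $U^j$ is a copy of the same $U$ transported via the inclusion $L^\infty(\G_j)\hookrightarrow L^\infty(\widetilde{\G})$. Passing to preduals, and using the standard identification $L^1(\widetilde{\G})/{}^\perp X = X_*$ for the predual of a weak-$*$ closed subspace, yields
\[L^1(\widetilde{\G})/{}^\perp X \;=\; X_* \;\cong\; \bigoplus_j^{\ell^2}(L^\infty_j)_* \;\cong\; \ell^2(\mathbb N,M_d).\]
The map $\phi$ is precisely this composition: the matrix $\phi(\omega)_j = (\omega\otimes\iota)(U^j)\in M_d$ represents the restriction of the functional $F_\omega = \langle\cdot,\omega\rangle$ to $L^\infty_j$ under the pairing $\langle(\iota\otimes\rho)(U^j),\omega\rangle = \rho(\phi(\omega)_j)$. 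Therefore $\phi$ is well-defined and bounded, its kernel equals ${}^\perp X$, and it descends to the claimed Banach-space isomorphism onto $\ell^2(\mathbb N,M_d)$.

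The main obstacle is the mutual orthogonality step; this is the genuine free-probabilistic ingredient, essentially the same fact underlying Lemma~\ref{lem:norm_equivalence}. The remainder is routine duality together with the uniform-in-$j$ equivalence of finite-dimensional norms on the co-efficient spaces.
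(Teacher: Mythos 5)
Your proof is correct and follows essentially the same route as the paper's: both arguments rest on the $L^2$-orthogonality of the coefficient spaces coming from freeness, on Lemma~\ref{lem:norm_equivalence}, and on the irreducibility of $U$ supplying a uniform lower bound on the finite-dimensional pieces $L^\infty_j$. The only difference is packaging --- you identify $X\cong\bigoplus^{\ell^2}_j L^\infty_j$ via $x\mapsto x\Omega$ and pass to preduals, whereas the paper runs the dual estimates directly (bounding the pairing of $\phi(\omega)$ against $\ell^2(\mathbb N,M_d^*)$ for the upper bound, and showing $\phi^*$ is bounded below for surjectivity); these are the same inequalities read in the two directions of the duality $L^1(\widetilde{\G})/{}^\perp X = X_*$.
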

\begin{proof}
The dual space of $\ell^2(\mathbb N,M_d)$ is $\ell^2(\mathbb N,M_d^*)$.  Let
$\rho=(\rho_i) \in \ell^2(\mathbb N,M_d^*)$, and let
$x_i=(\iota\otimes\rho_i)(U^i) \in L^\infty_i$.  For $n\in\mathbb N$,
let $x=\sum_{i=1}^n x_i \in X$.
Observe that $x\Omega = \sum_{i=1}^n x_i\Omega = \sum_{i=1}^n x_i\xi_i$
as $\varphi_i(x_i)=0$ for each $i$.  As the $x_i\xi_i$ are pairwise
orthogonal in the Fock space $H$, it follows that
\[ \|x\Omega\|^2 = \sum_{i=1}^n \|x_i\xi_i\|^2 \leq \sum_{i=1}^n \|x_i\|^2
\leq \sum_{i=1}^n \|\rho_i\|^2 \leq \|\rho\|^2. \]
Letting $C$ be the constant from Lemma~\ref{lem:norm_equivalence},
we see that
\begin{align*}
\Big| \sum_{i=1}^n \ip{\rho_i}{(\omega\otimes\iota)(U^i)} \Big| &=
\Big| \sum_{i=1}^n \ip{x_i}{\omega} \Big| = \big| \ip{x}{\omega} \big| \\
&\leq \|\omega\| \|x\|
\leq C\|\omega\| \|x\Omega\|
\leq C\|\omega\| \|\rho\|. \end{align*}
As $n$ and $\rho$ were arbitrary, it follows that $\phi$ is well-defined,
and $\|\phi\|\leq C$.

For each $i$, as $U^i$ is a corepresentation, the map $L^1(\widetilde{\G})
\rightarrow M_d(\C); \omega\mapsto (\omega\otimes\iota)(U^i)$ is a homomorphism.
It follows that $\phi$ is also a homomorphism.  Notice that $\phi(\omega)=0$
if and only if $(\omega\otimes\iota)(U^i)=0$ for each $i$, if and only if
$\omega \in {}^\perp L^\infty_i$ for each $i$, if and only if $\omega
\in {}^\perp X$.  So $\phi$ drops to give an injective map
$L^1(\widetilde{\G})/{}^\perp X \rightarrow \ell^2(\mathbb N,M_d)$.

Notice that $\phi^*:\ell^2(\mathbb N,M_d^*) \rightarrow ({}^\perp X)^\perp = X$
is the map $(\rho_i) \mapsto \sum_i (\iota\otimes\rho_i)(U^i)$.
As $U$ is irreducible, the map $M_d^*
\rightarrow L^\infty(\G), \rho\mapsto (\iota\otimes\rho)(U)$ is injective,
and so also the map $M_d^* \rightarrow L^2(\G), \rho\mapsto
\Lambda( (\iota\otimes\rho)(U) )$ is injective.  Thus there is a constant $C'$
such that $\|\rho\| \leq C'\| (\iota\otimes\rho)(U)\xi_i \|$ for all
$\rho\in M_d^*, i\in\mathbb N$.  Then, for
$(\rho_i)\in\ell^2(\mathbb N,M_d^*)$,
\begin{align*}
\| (\rho_i) \|^2 &\leq {C'}^2 \sum_i \|(\iota\otimes\rho_i)(U^i)\xi_i \|^2
= {C'}^2 \Big\| \sum_i (\iota\otimes\rho_i)(U^i) \Omega \Big\|^2 \\
&\leq {C'}^2 \Big\| \sum_i (\iota\otimes\rho_i)(U^i) \Big\|^2
= {C'}^2 \big\| \phi^*((\rho_i)) \big\|^2. \end{align*}
Thus $\phi^*$ is bounded below, which implies that $\phi$ is a surjection,
and so the proof is complete.
\end{proof}

\subsection{Proof of the main theorem}

The final part of the construction is to find a bounded-below homomorphism
$\theta : \ell^2(\mathbb N, M_d) \rightarrow \mc B(H)$ for a suitable
Hilbert space $H$.  Given $\xi,\eta\in H$, denote by $\theta_{\xi,\eta}$
the rank-one operator $H\rightarrow H; \gamma \mapsto (\gamma|\eta)\xi$.
Recall from \cite[Section~3.1]{cs} that if we denote by $(\delta_i)$ the
canonical orthonormal basis of $\ell^2(\mathbb N_0)$ or $\ell^2(\mathbb N)$,
and define $\theta_0:\ell^2(\mathbb N)\rightarrow\mc B(\ell^2(\mathbb N_0))$
by $\theta_0(\delta_i) = \theta_{\delta_i,\delta_i+\delta_0}$, then
$\theta_0$ extends by linearity and continuity to a bounded below algebra
homomorphism.

Set $H = \mathbb C^d\otimes \ell^2(\mathbb N_0)$, and regard
$\ell^2(\mathbb N, M_d)$ as $M_d \otimes \ell^2(\mathbb N)$.
Now define $\theta = (\iota\otimes\theta_0): \ell^2(\mathbb N, M_d)
\rightarrow M_d \otimes \mc B(\ell^2(\mathbb N_0)) \cong \mc B(H)$,
an algebra homomorphism.
As $M_d$ is finite-dimensional, clearly $\theta$ is bounded, and bounded below.
We can now prove the main result of this section.

\begin{proof}[Proof of Theorem~\ref{thm:freeproduct}]
Form $X$ as above, and the algebra isomorphism $\phi:L^1(\widetilde{\G})/{}^\perp X
\rightarrow \ell^2(\mathbb N, M_d)$.  Let $q:L^1(\widetilde{\G}) \rightarrow
L^1(\widetilde{\G})/{}^\perp X$ be the quotient map.  Let $\pi=\theta \circ \phi
\circ q$, so that $\pi$ is a bounded representation of $L^1(\widetilde{\G})$.

Towards a contradiction, suppose that $\pi$ is completely bounded, and
hence associated to a corepresentation $V_\pi\in L^\infty(\widetilde{\G})
\vnten\mc B(H)$.  Set
\[ U = \sum_{i\in\mathbb N} U^i \otimes \theta_{\delta_i,\delta_i}
\in L^\infty(\widetilde{\G})\vnten M_d \vnten \mc B(\ell^2(\mathbb N_0))
\cong L^\infty(\widetilde{\G})\vnten \mc B(H), \]
where the sum converges $\sigma$-weakly (think of $U$ as being a
block diagonal matrix, with diagonal entries $U^i$, which are all unitary).
For $\omega\in L^1(\widetilde{\G})$, $\xi\in\mathbb C^d$ and $k\in\mathbb N_0$,
we have that
\[  \pi(\omega)(\xi\otimes\delta_k)
= \delta_{k,0} (\omega\otimes\iota)(U^i)(\xi) \otimes \delta_i
+ \delta_{k,i} (\omega\otimes\iota)(U^k)(\xi) \otimes \delta_k. \]
Consequently,
\[ (\omega\otimes\iota)(V_\pi-U) (\xi\otimes\delta_k)
= \pi(\omega)(\xi\otimes\delta_k)
- (\omega\otimes\iota)(U^k)(\xi) \otimes \delta_k
= \delta_{k,0} (\omega\otimes\iota)(U^i)(\xi) \otimes \delta_i. \]
If we now set
\[ x_i = (\iota\otimes\omega_{\delta_0,\delta_i})(V_\pi-U)
\in L^\infty(\widetilde{\G})\vnten M_d, \]
then $\sum_i x_i^*x_i$ converges $\sigma$-weakly to 
$(\iota\otimes\omega_{\delta_0,\delta_0})((V_\pi-U)^*(V_\pi-U))$.
However, for $\omega\in L^1(\widetilde{\G})$ and $\xi\in\mathbb C^d$, also
\[ \sum_i (\omega\otimes\iota)(x_i)(\xi) \otimes \delta_i
= \sum_i (\omega\otimes\iota)(V_\pi-U) (\xi\otimes\delta_0)
= \sum_i (\omega\otimes\iota)(U^i)(\xi) \otimes \delta_i. \]
It follows that $x_i = U^i$ for all $i$, and hence each $x_i$ is unitary,
which contradicts $\sum_i x_i^*x_i$ converging.
We conclude that $\pi$ is not completely bounded, and hence cannot be
similar to a $*$-representation.
\end{proof}

Let us make a remark about $\pi^*$.
As each $U^i$ is a unitary corepresentation, we have that
$(\omega^\sharp\otimes\iota)(U^i) = (\omega\otimes\iota)(U^i)^*$ for each
$\omega\in L^1_\sharp(\widetilde{\G})$.  Thus if we give $\ell^2(\mathbb N,M_d)$ the
pointwise $*$ operation, it follows that
$\phi\circ q: L^1(\widetilde{\G}) \rightarrow \ell^2(\mathbb N,M_d)$ is a
$*$-homomorphism.  Thus
\[ \pi^*(\omega) = \pi(\omega^\sharp)^* = \theta\big( \phi(q(\omega))^* \big)^*
\qquad (\omega\in L^1_\sharp(\widetilde{\G})). \]
It follows that $\pi^*$ extends to a bounded homomorphism on $L^1(\widetilde{\G})$.
It is easy to compute explicitly what $\theta( a^* )^*$ is, for each
$a\in\ell^2(\mathbb N,M_d)$, and then adapting the argument in the previous
proof will show that $\pi^*$ is also not completely bounded.

\begin{remark} \label{rem:torus}
We now briefly address the case where $\widetilde{\G} = \mathbb T*\G$ in Theorem \ref{thm:freeproduct}.  Let $z$ denote the canonical Haar unitary generator of 
$L^\infty(\mathbb T)$, and let $U \in L^\infty(\G) \otimes M_d(\C)$ be our fixed non-trivial irreducible unitary representation of $\G$.  For each $i \in \mathbb N$, consider the tensor product unitary representation of $\widetilde{\G}$ given by $V^i = z^i \boxtimes U\boxtimes z^{-i} \in L^\infty(\widetilde{\G}) \otimes M_d(\C)$ (where $z^i$ is viewed as a one-dimensional representation of $\mathbb T$).  It follows from \cite{wa1} that the representations $\{V^i\}_{i \in \mathbb N}$ are pairwise inequivalent and irreducible.  Our claim is that the above proof for $\widetilde{\G} = \ast_{i \in \mathbb N}\G_i$ goes through unchanged with the family $\{U^i\}_{i \in \mathbb N}$ replaced by $\{V^i\}_{i \in \mathbb N}$. 

To see this, observe that the only facts that we used about $\widetilde{\G}$ and $\{U^i\}_{i \in \mathbb N}$ are: (1) each representation $U^i$ is $d$-dimensional, (2) the elements of the co-efficient spaces $L^\infty_i$ associated to $U^i$ are centred with respect to the Haar state, and (3) the von Neumann algebras $\mc A_i$ generated by $L^\infty_i$ are pairwise freely independent in $(L^\infty(\widetilde{\G}), \varphi)$.  For the family $\{V^i\}_{i \in \mathbb N}$,  conditions (1) and (2) are automatically satisfied.  To see that condition (3) is also satisfied, note that if $\mc A_0 \subseteq L^\infty(\G) \hookrightarrow L^\infty(\widetilde{\G})$ denotes the von Neumann algebra generated by the  
co-efficient space of $U$, then $\mc A_i = z^i\mc A_0z^{-i}$ for each $i \in \mathbb N$.  Using this fact, condition (3) is easily seen to be a simple consequence of the definition of free independence (see \cite[Definition 5.3]{nisp}, for example) and the fact that $z$ is $\ast$-free from $\mc A_0$.  \end{remark}

\section{For the Fourier algebra} \label{sec:fourieralgebra}

In this section, we collect some further special cases for the
Fourier algebra.  These add further weight to the conjecture that every
completely bounded representation $\pi:A(G)\rightarrow \mathcal B(H)$ is
similar to a $*$-representation.  The main results of this section are as follows.

\begin{theorem}\label{T:cc rep-amen group}
Let $G$ be an amenable locally compact group, and let $\pi : A(G) \to \mathcal B(H)$
be a completely \emph{contractive} representation. Then $\pi$ is a $*$-representation.
\end{theorem}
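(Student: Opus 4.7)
The plan is to apply the amenable similarity theorem (Theorem~\ref{thm:main}/\ref{thm:main2}) and then upgrade the conclusion from ``similar to'' to ``equal to'' a $*$-representation by exploiting complete contractivity on both sides.

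First I would verify that $\check\pi$ is also completely contractive. In the cocommutative setting (so that $L^1(\G)=A(G)$ corresponds to $\G=\hat G$), a direct computation gives $(\omega^*)^\sharp = R_*(\omega)$, where $R_*$ is the preadjoint of the antipode $R$ on $VN(G)$. Since $\hat G$ is a Kac algebra, $R$ is a bounded $*$-antiautomorphism; the corresponding $R_*$ is a complete isometry of $A(G)$, so $\check\pi=\pi\circ R_*$ inherits complete contractivity from $\pi$. Having $\check\pi$ completely bounded, and amenability of $\hat G$ as a quantum group (which follows from $G$ being amenable), Theorem~\ref{thm:main2} applies and gives that $V_\pi$ is invertible on the essential subspace $L^2(G)\otimes H_e$ with inverse $V_{\check\pi}$.

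Now the decisive step: complete contractivity of both $\pi$ and $\check\pi$ yields $\|V_\pi\|\le 1$ and $\|V_\pi^{-1}\|=\|V_{\check\pi}\|\le 1$. An invertible Hilbert-space operator whose inverse is also a contraction must be unitary (from $\|\xi\|=\|V_\pi^{-1}V_\pi\xi\|\le\|V_\pi\xi\|\le\|\xi\|$ one gets $V_\pi$ isometric, and surjectivity is automatic from invertibility). Therefore $V_\pi|_{L^2(G)\otimes H_e}$ is a unitary corepresentation. Combining this with the fact that the range of $V_\pi$ on all of $L^2(G)\otimes H$ equals $L^2(G)\otimes H_e$ (Theorem~\ref{thm:main2}), the injectivity of $V_\pi$ on $L^2(G)\otimes H_e$ forces $V_\pi$ to vanish on $L^2(G)\otimes H_e^\perp$; unpacking this slice-wise shows $\pi(\omega)\xi=0$ for all $\omega\in A(G)$ and $\xi\in H_e^\perp$. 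Hence $\pi$ decomposes as $\pi|_{H_e}\oplus 0$ with respect to $H=H_e\oplus H_e^\perp$, and $\pi|_{H_e}$ is a $*$-representation (since its corepresentation is unitary), making $\pi$ a $*$-representation on $H$.

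The principal obstacle is the first step---establishing that $R_*$ is completely isometric, not merely isometric, on $A(G)$. The paper only states that $R_*$ is a linear isometry of $M_*$, so a separate operator-space argument must justify the complete-isometry upgrade in the Kac setting. An alternative that avoids invoking $R_*$ directly is to pass to the correspondence between completely contractive representations of $A(G)$ and contractive uniformly bounded representations $\pi_0:G\to\mathcal B(H)$: amenability combined with non-degeneracy forces $\pi_0(e)=I$ (via the contractive approximate identity from Leptin's theorem), and then multiplicativity together with $\|\pi_0(g)\|,\|\pi_0(g^{-1})\|\le 1$ forces each $\pi_0(g)$ to be unitary, giving a unitary representation and hence a $*$-representation of $A(G)$.
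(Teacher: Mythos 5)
Both of your routes break down, and in each case the obstruction is genuine rather than a missing technical verification. For the first route: $R_*$, i.e.\ the check map $u\mapsto\check u$ on $A(G)$, is an isometry but is \emph{not} completely bounded in general, let alone completely isometric. Under the adjoint correspondence it is implemented by the unitary antipode $R(\lambda(t))=\lambda(t^{-1})=\lambda(t)^*$ of $VN(G)$, which behaves like a transpose: for $G$ compact one has $A(G)\cong\bigoplus^{\ell^1}_\pi S^1_{d_\pi}$, and the check map acts on the block of an irreducible representation $\pi$ as the transpose $\hat u(\pi)\mapsto\hat u(\pi)^{T}$ (relocated to the conjugate block), whose cb-norm is $d_\pi$. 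Hence $\|R_*\|_{cb}=\sup_\pi d_\pi=\infty$ already for $G=SU(2)$, which is amenable; this is precisely the obstruction behind Forrest--Runde's theorem that $A(G)$ is amenable as a Banach algebra only for virtually abelian $G$. So you cannot conclude that $\check\pi=\pi\circ R_*$ is completely contractive (or even completely bounded), and Theorem~\ref{thm:main2} is not applicable. The paper circumvents this by \emph{first} proving that $\overline{\pi(A(G))}$ is a commutative C$^*$-algebra --- using that $A(G)$ is $1$-operator amenable (Ruan) and the Blecher--Le~Merdy rigidity theorem for $1$-operator amenable operator algebras --- and only \emph{then} deducing $\|\pi^*\|_{cb}=\|\pi\|_{cb}\le1$ (the entrywise adjoint is completely isometric on a commutative range, and $u\mapsto\overline{u}$ is completely isometric on $A(G)$; note that it is $u\mapsto\overline{u}$, not $u\mapsto\check u$, that is completely isometric). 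The similarity is then produced by the quantitative cocommutative similarity theorem, which gives $1\le\|T\|\,\|T^{-1}\|\le\|\pi\|_{cb}^2\|\pi^*\|_{cb}^2\le1$ and hence forces $T$ to be a multiple of a unitary.

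Your alternative route rests on a false premise: bounded homomorphisms $A(G)\to\mc B(H)$ do \emph{not} correspond to uniformly bounded maps $\pi_0:G\to\mc B(H)$ satisfying $\pi_0(st)=\pi_0(s)\pi_0(t)$. That correspondence is for the convolution algebra $L^1(G)$ (the quantum group $\G=G$), whereas $A(G)=L^1(\hat G)$ carries the pointwise product; its completely bounded homomorphisms are governed by corepresentations $V_\pi\in VN(G)\vnten\mc B(H)$, not by operator-valued functions on $G$ multiplied via the group law. Consequently the argument ``$\pi_0(e)=I$ and $\pi_0(g)\pi_0(g^{-1})=I$ forces unitarity'' has no meaning here. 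The part of your first route after the gap (an invertible corepresentation that is a contraction with contractive inverse is unitary, and a unitary corepresentation yields a $*$-representation) is sound, but it cannot be reached without first establishing complete contractivity of $\check\pi$, which is exactly what fails.
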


\begin{theorem}\label{T:cb rep-amen open SIN sub}
Let $G$ be an amenable locally compact group which contains an open SIN subgroup
$K \subseteq G$. Then every completely bounded representation
$\pi:A(G)\rightarrow\mc \mathcal B(H)$ is similar to a $*$-representation.
\end{theorem}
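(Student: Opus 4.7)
The plan is, via Theorem~\ref{thm:two}, to reduce the claim to showing that $\check\pi$ extends to a completely bounded map on $A(G)$, and then to prove this by combining the known case of an amenable SIN group (applied to $K$) with an averaging argument over $G/K$ driven by amenability of $G$.

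First, since $K$ is open in $G$, the extension-by-zero map $A(K) \hookrightarrow A(G)$ is a completely isometric embedding onto a closed two-sided ideal, so $\pi_K := \pi|_{A(K)}: A(K) \to \mc B(H)$ is completely bounded. Being an open subgroup of the amenable $G$, $K$ is itself amenable; combined with the SIN hypothesis, the previously-established case of amenable SIN groups (\cite[Theorem~20]{bs}; equivalently the analogue of Theorem~\ref{thm:Kac} produced using the tracial bounded approximate identity of $A(K)$ coming from the Plancherel trace on $VN(K)$) yields that $\pi_K$ is similar to a $*$-representation on its essential subspace. Hence by Proposition~\ref{prop:three}, $(\pi_K)^* = \pi^*|_{A(K)}$ extends to a completely bounded map on $A(K)$.

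Second, I promote the $A(K)$-bound on $\pi^*$ (equivalently $\check\pi$) to an $A(G)$-bound using amenability of $G$. Since $K$ is open, the coset space $G/K$ is discrete and different cosets $gK$ are disjoint; amenability of $G$ supplies a left $G$-invariant mean on $\ell^\infty(G/K)$ (equivalently, a F\o lner net in $G/K$). Denoting by $L_g: A(G)\to A(G)$ left translation by $g\in G$, the extended-by-zero tracial approximate identity $(\tilde e_n)\subset A(K)\hookrightarrow A(G)$ has translates $L_g(\tilde e_n)$ supported on $gK$ whose $L^2$-supports are pairwise orthogonal across distinct cosets, forcing $\|\sum_{g\in F}L_g(\tilde e_n)\|_{A(G)}$ to grow only as $\sqrt{|F|}$. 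Averaging the translated Step~1 estimate against a F\o lner net in $G/K$ then produces a uniform bound $\|\check\pi(\omega)\|_{\mc B(H)} \leq C\|\pi\|_{cb}^2\|\omega\|_{A(G)}$ on a dense subset of $A(G)$, and the matrix-norm analogue yields complete boundedness of $\check\pi$ on $A(G)$.

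With both $\pi$ and $\check\pi$ completely bounded on $A(G)$, Theorem~\ref{thm:two} applies to give $\pi$ similar to a $*$-representation; note that $C_0(\hat\G) = C_0(G)$ is commutative and hence automatically nuclear in the classical case, so the nuclearity step of Theorem~\ref{thm:two} requires no further hypothesis. The main obstacle lies in the second step: although the per-coset bound and the F\o lner structure on $G/K$ are clearly the right ingredients, combining them into a uniform $A(G)$-bound requires a careful interplay of (a) the $\ell^2$-orthogonality of cosetwise supports, which tames the otherwise $L^1$-type growth of the $A(G)$-norm of translation sums, (b) the F\o lner property of $G/K$ from amenability of $G$, and (c) consistent transport of the tracial Kac-algebra estimate from Step~1 along conjugations $gKg^{-1}$ (equivalently, across translates of $A(K)$ inside $A(G)$). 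Getting these three pieces to mesh uniformly is the crux of the argument.
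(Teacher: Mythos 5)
Your overall strategy (reduce to showing that $\check\pi$ is completely bounded on all of $A(G)$ and then invoke Theorem~\ref{thm:two}) is legitimate in principle, and your Step~1 is fine: $K$ is open, so $A(K)\hookrightarrow A(G)$ is a complete isometry onto an ideal, $K$ is amenable and SIN, and \cite[Theorem~20]{bs} gives a uniform bound $\|\sigma_x\|\le\|\pi\|_{cb}^2$ for the extension of $\pi_x\circ L_x$ to $C_0(K)$ on each coset. But Step~2 --- passing from the cosetwise bounds to a bound on all of $A(G)$ --- is exactly where the theorem lives, and you have not proved it; you have only listed the ingredients you hope will combine. The concrete obstruction is that for $u\in A(G)$ one only knows $\|u_x\|_{A(G)}\le\|u\|_{A(G)}$ for each piece $u_x=u\chi_{xK}$, while $\sum_x\|u_x\|_{A(G)}$ is in general unbounded in terms of $\|u\|_{A(G)}$; so the individual estimates cannot be summed, and a F\o lner average over $G/K$ does not convert a family of mutually incompatible per-coset similarities into a single global norm estimate --- Day--Dixmier averaging manufactures a similarity from an invariant mean acting on one uniformly bounded object, not a bound for $\check\pi$ evaluated on a general $u$ decomposed over infinitely many cosets. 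Your heuristic that $\bigl\|\sum_{g\in F}L_g(\tilde e_n)\bigr\|_{A(G)}$ grows like $\sqrt{|F|}$ is unsubstantiated and in any case beside the point, since the quantity to control is not a sum of translated approximate identities.

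The paper closes this gap by a different mechanism, and it is worth absorbing: it never bounds $\check\pi$ at all. Each $\sigma_x$ produces an idempotent $E_x=\lim_\alpha\sigma_x(f_\alpha)$ (strong operator topology) with $E_xE_y=0$ for $x\ne y$, all lying in $A''$ where $A=\overline{\pi(A(G))}$. Operator amenability of $A(G)$ passes to $A$, and then \cite[Proposition~7.4.12(ii)]{BL} together with Gifford's reduction-property results \cite{G} yields a \emph{single} invertible $T$ conjugating \emph{all} the $E_x$ simultaneously to pairwise orthogonal self-adjoint projections $P_x$. Consequently the operators $T\pi(u_x)T^{-1}$ become orthogonal in both range and domain, so
$\|T\pi(u)T^{-1}\|^2=\sup_{x}\|T\pi(u_x)T^{-1}\|^2\le\|T\|^2\|T^{-1}\|^2\|\pi\|_{cb}^4\|u\|_\infty^2$,
i.e.\ the sum over cosets collapses to a supremum, $T\pi(\cdot)T^{-1}$ extends to $C_0(G)$, and nuclearity finishes the proof. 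That simultaneous orthogonalisation is precisely the step your averaging sketch is missing, and I do not see how to recover it from amenability of $G/K$ alone; you should either carry out your Step~2 in full or adopt the reduction-property argument.
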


The proofs of the above results rely on the fact that when $G$ is an amenable
locally compact group, $A(G)$ is always a $1$-operator amenable completely contractive
Banach algebra, see \cite{Ruan}.

\begin{definition}\label{D:vertual Dia-Haagerup}
Let $A$ be completely contractive Banach algebra, and denote by
$m:A \widehat{\otimes}A \to A$ the multiplication map.  A net $\{d_\alpha \}$ in
$A \widehat{\otimes} A$ is called a {\it bounded approximate diagonal} for $A$ if
$\{d_\alpha \}$ is bounded and for every $a\in A$,
\[ a\cdot d_\alpha - d_\alpha \cdot a \rightarrow 0 \quad\text{and}\quad
am(d_\alpha) \rightarrow 0, \]
where $A \proten A$ has the usual $A$-bimodule structure,
\[ a \cdot (b \otimes c)=ab \otimes c, \quad
(b \otimes c) \cdot a =b \otimes ca \qquad
(a,b,c \in A). \]
An element $M \in (A\widehat{\otimes} A)^{**}$ is called a \emph{virtual diagonal}
provided that
\[ a\cdot M=M\cdot a, \quad am^{**}(M)=a \qquad (a\in A). \]
Ruan proved in \cite{Ruan} that the $a$ has a bounded approximate diagonal
in $A\proten A$ if and only if it has a virtual diagonal in
$(A\widehat{\otimes} A)^{**}$ (and, in turn, these are equivalent to $A$ being
\emph{operator amenable}).

Now let $L >0$. $A$ is called \emph{$L$-operator amenable} if it has a virtual
diagonal $M\in (A\widehat{\otimes} A)^{**}$ such that $\|M\|\leq L$. The
\emph{operator amenability constant} of $A$ is
$N:=\inf \{ L : A\text{ is L-operator amenable} \}$.
By Alaoglu's theorem, the operator amenability constant is attained, that is,
$A$ has a $N$-virtual diagonal in $(A\widehat{\otimes} A)^{**}$.
Since any virtual diagonal has norm at least 1, the operator amenability constant
is at least 1.
\end{definition}

\begin{proof}[Proof of Theorem~\ref{T:cc rep-amen group}:]
Since $G$ is amenable, $A(G)$ has a contractive approximate identity.
Hence it follows from \cite[Remark 7]{bs} (see also Lemma~\ref{lem:degen})
that $\pi$ is a $*$-representation if and only if $\pi_e$ is a $*$-representation,
where $\pi_e = \pi|_{H_e}$ is the essential part of $\pi$. 
So, without loss of generality, we can assume that $\pi$ is non-degenerate.

Put $A=\overline{\pi(A(G))}$. Since $A(G)$ is 1-operator amenable and $\pi$ is a
complete contraction, it follows that $A$ is also 1-operator amenable.  
It follows from
\cite[Theorem 7.4.18(ii)]{BL} that $A$ is a (commutative) C$^*$-algebra.
(Note that the proof of  \cite[Theorem 7.4.18(ii)]{BL} is for \emph{unital} operator
algebras but it can be easily modified to apply to our case as well).
We now follow the proof of \cite[Corollary 9]{bs}.
Indeed, the adjoint operation on a commutative C$^*$-algebra
(in particular, on  $A$) is a complete isometry.  Recall that $\pi^*$ is defined by
\[ \pi^*(u) = \pi(\overline{u})^* \qquad (u\in A(G)). \]
Now, the map $u\mapsto\overline{u}$ is an anti-linear complete isometry,
and thus $\|\pi^*\|_{cb} = \|\pi\|_{cb}$.  Hence by \cite[Theorem 8]{bs},
there is an invertible operator $T\in \mathcal B(H)$ and a
$*$-homomorphism $\sigma : C_0(G) \to \mathcal B(H)$ such that
\[ \pi(u)=T^{-1}\sigma(u)T \qquad (u\in A(G)). \]
Moreover, $T$ can be chosen so that
\[ 1 \leq \|T\|\|T^{-1}\| \leq \|\pi\|^2_{cb} \|\pi^*\|^2_{cb} \leq 1. \]
So $ \|T\| \|T^{-1}\| = 1$, and so, by replacing $T$ with $T/\|T\|$,
we can assume that $\|T\|=\|T^{-1}\|=1$ which means that $T$ is unitary.
Hence
\[ \pi(\cdot)=T^{-1}\sigma(\cdot)T=T^*\sigma(\cdot)T \]
is a $*$-homomorphism, as claimed.
\end{proof}

We now turn to the proof of Theorem~\ref{T:cb rep-amen open SIN sub}.
For a locally compact group $G$ and an open subgroup $K$ of $G$, let
$G=\dot{\bigcup}_{x\in I} xK$ denote the decomposition of $G$ to distinct
left cosets of $K$ (i.e. $xK\cap yK=\emptyset$ if $x\neq y$). For every
element $u\in A(G)$ and $x\in I$,  write
\[ u_x=u\chi_{xK}, \]
where $\chi_{xK}$ is the characteristic function of the coset $xK$.
Since $K$ is open, each $\chi_{xK}$ is a norm-one idempotent in the
Fourier-Stieltjes algebra $B(G) = C^*(G)^*$ \cite[Proposition 2.31]{eymard}.
Since $A(G)$ is a closed ideal in $B(G)$, we conclude that $u_x \in A(G)$. We let
\[ A(xK)=A(G)\chi_{xK}. \]
Since $K$ is open, the canonical embedding of the Fourier algebra $A(K)$ into $A(G)$
(that is, extending functions by zero outside of $K$) is completely isometric,
allowing us to identify $A(K)$ with its image $A(eK)$ unambiguously.
In what follows, we shall consider the translation operators
$L_x: A(K) \to A(xK)$ defined by
\[ (L_xu)(y) = u(x^{-1}y), \qquad (u \in A(K)). \]
Since left translation on $A(G)$ is completely isometric,
$L_x: A(K) \to A(xK)$ is always a completely isometric algebra isomorphism.
Finally, for any homomorphism $\pi : A(G) \to \mc B(H)$, let
$\pi_x$ ($x\in I)$ denote the restriction of $\pi$ to the ideal $A(xK)$.

\begin{proof}[Proof of Theorem~\ref{T:cb rep-amen open SIN sub}:]
Since $G$ is amenable, $A(G)$ has a contractive approximate identity, and we
may again assume that $\pi$ is non-degenerate.

As above, we have that $G=\dot{\bigcup}_{x\in I} xK$. For every $x\in
I$, the mapping $\pi_x\circ L_x:A(K) \to \mathcal B(H)$ defines a completely
bounded homomorphism of $A(K)$ on $H$, and so by \cite[Theorem 20]{bs},
$\pi_x\circ L_x$ extends continuously to a bounded representation
$\sigma_x : C_0(K) \to \mathcal B(H)$ with
\[ \|\sigma_x\| \leq \|\pi\|_{cb}^2. \]
Let $\{f_\alpha \}_{\alpha \in \Delta }\subset A(K)$ be a contractive
approximate identity for $C_0(K)$.  For every $u\in A(G)$, we have
\[ \lim_{\alpha} \sigma_x(f_\alpha)\pi(u)
= \lim_{\alpha} \sigma_x(f_\alpha(L_{x^{-1}}u_x))
= \sigma_x(L_{x^{-1}}u_x) = \pi(u_x). \]
This, together with the fact that $\pi$ is non-degenerate, implies
that $E_x$, the limit of the net $\{\sigma_x(f_\alpha)\}$ in the strong operator
topology of $\mathcal B(H)$, exists. Moreover, for every $x\in I$, $E_x$ is an
idempotent and for every $x\neq y$,
\begin{align}\label{Eq:open subgroup-0}
E_xE_y=0.
\end{align}
Now let $A = \overline{\pi(A(G))} \subseteq \mathcal B(H)$ . Since $A$
is the closure of the range of a completely bounded homomorphism of the operator amenable
Banach algebra $A(G)$, it is itself operator amenable. Hence, by combining
\cite[Proposition 7.4.12(ii)]{BL} and \cite[Corollary 4.8]{G}, it follows that
\[ A''=\overline{A}^{s.o.t}, \]
where $\overline{A}^{s.o.t}$ refers to the closure of $A$ in the strong
operator topology of $\mathcal B(H)$. In particular, $A''$ is commutative and
for every $x\in I$, \[ E_x\in A''. \]
Therefore, again by combining \cite[Proposition 7.4.12(ii)]{BL} and \cite[Lemma 4.4]{G},
there is an invertible operator $T\in \mathcal B(H)$ such that
$T^{-1}E_xT$ is an orthogonal projection for all $x\in I$.  Let
\[ P_x = T^{-1}E_xT \quad\text{and}\quad
\rho_x(\cdot) = T^{-1}\sigma_x(\cdot) T \qquad (x\in I). \]
For every $x,y \in I$ with $x\neq y$, it follows from
(\ref{Eq:open subgroup-0}) that $P_xP_y =0.$
This implies that, for every $u,v\in C_0(K)$,
\begin{align}\label{Eq:open subgroup-1}
\rho_x(u)^*\rho_y(v)=\rho_x(u)^*P_x^*P_y\rho_y(v)=0,
\end{align}
and
\begin{align}\label{Eq:open subgroup-2}
\rho_x(u)\rho_y(v)^*=\rho_x(u)P_xP_y^*\rho_y(v)^*=0.
\end{align}
Now let $u\in A(G)$. For every $x,y \in I$ with $x\neq y$, it follows from
(\ref{Eq:open subgroup-1}) that
\[ \big( T\pi(u_x)T^{-1} \big)^*
\big( T\pi(u_y)T^{-1} \big)
= \rho_x(L_{x^{-1}}u_x)^*\rho_y(L_{y^{-1}}u_y) = 0. \]
Similarly (\ref{Eq:open subgroup-2}) implies that
\[ \big( T\pi(u_x)T^{-1} \big) \big(T\pi(u_y)T^{-1}\big)^*=0. \]
Hence $\{(T^{-1}\pi(u_x)T)^*(T^{-1}\pi(u_x)T) \}_{x\in I}$
is a family of pairwise orthogonal positive operators on $H$, and therefore
\begin{align*}
\|T\pi(u)T^{-1}\|^2 &=
\|(T\pi(u)T^{-1})^*(T\pi(u)T^{-1})\|
= \Big\|\sum_{x\in I} (T\pi(u_x)T^{-1})^*(T\pi(u_x)T^{-1})\Big\| \\
&= \sup_{x\in I} \| (T\pi(u_x)T^{-1})^*(T\pi(u_x)T^{-1})\| \leq
\|T\|^2 \|T^{-1}\|^2 \sup_{x\in I} \|\pi(u_x)\|^2 \\
&\leq   \|T\|^2\|T^{-1}\|^2 \|\pi\|_{cb}^4 \|u\|^2_\infty.
\end{align*}
Thus $T\pi(\cdot) T^{-1}$ extends continuously to a bounded homomorphism
of $C_0(G)$ into $\mathcal B(H)$, and hence, so does $\pi$.  The result now follows.
\end{proof}

One class of amenable locally compact groups having open SIN subgroups
are the amenable, totally disconnected groups. In particular, if we assume further
that they are non-compact, then they give us examples of non-SIN groups
satisfying the assumption of Theorem~\ref{T:cb rep-amen open SIN sub}. 
These groups, for instance, include Fell groups (see \cite[Section 2]{RS}
for more details).

\appendix
\section{Duality for closed operators}\label{sec:app1}

In this appendix, we will prove the following result.

\begin{proposition}\label{prop:aptwo}
Let $\G$ be a locally compact quantum group, and let $x,y\in L^\infty(\G)$
be such that $\ip{x}{\omega^\sharp} = \overline{\ip{y}{\omega}}$ for all
$\omega\in L^1_\sharp(\G)$.  Then $x\in D(S)$ with $S(x)^* = y$.
\end{proposition}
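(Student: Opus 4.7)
The plan is to rephrase the hypothesis in terms of the pre-adjoint $S_*: L^1(\G) \supseteq D(S_*) \to L^1(\G)$ of the antipode $S$, and then derive the conclusion from the fact that $S$ is $\sigma$-weakly closed via a direct bipolar-theorem argument. For the reformulation, recall that $\ip{a}{\omega^*} = \overline{\ip{a^*}{\omega}}$, so $\overline{\ip{S(z)^*}{\omega}} = \ip{S(z)}{\omega^*}$ for $z \in D(S)$; hence, by the definition of $L^1_\sharp(\G)$, we have $\omega \in L^1_\sharp(\G)$ if and only if $z \mapsto \ip{S(z)}{\omega^*}$ is bounded on $D(S)$, if and only if $\omega^* \in D(S_*)$, and in this case $\omega^\sharp = S_*(\omega^*)$. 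Thus $D(S_*) = \{\omega^* : \omega \in L^1_\sharp(\G)\}$, and (using $\overline{\ip{y}{\omega}} = \ip{y^*}{\omega^*}$ and setting $\nu = \omega^*$) the hypothesis becomes
\[ \ip{x}{S_*(\nu)} = \ip{y^*}{\nu} \qquad (\nu \in D(S_*)). \]

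For the duality step, the antipode $S$ is $\sigma$-weakly closed (a standard fact, see \cite[Section 5]{kv}), so its graph $G(S) \subseteq L^\infty(\G) \times L^\infty(\G)$ is weak$^*$-closed in the product duality with $L^1(\G) \times L^1(\G)$. A routine computation identifies the polar
\[ G(S)^\circ = \{(-S_*(\nu), \nu) : \nu \in D(S_*)\}, \]
since annihilating the graph of $S$ is precisely the defining property of the pre-adjoint. By the bipolar theorem, $G(S) = G(S)^{\circ\circ}$, and the latter equals the set of pairs $(x,y)$ such that $\ip{x}{S_*(\nu)} = \ip{y}{\nu}$ for all $\nu \in D(S_*)$. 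Applied to $(x, y^*)$, this yields $x \in D(S)$ with $S(x) = y^*$, which rearranges to $S(x)^* = y$.

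The main technical ingredient is the $\sigma$-weak closedness of $S$, which is non-trivial to set up in the first place (resting on the decomposition $S = R\tau_{-i/2}$ and the identification of $\tau_{-i/2}$ as the analytic generator of the $\sigma$-weakly continuous scaling group) --- this is precisely the machinery developed in the appendix. Once that is in place, the proof reduces to the bipolar argument above; the only small point to verify is that every element of $G(S)^\circ$ really is of the advertised form, which is immediate since $D(S_*)$ is by construction the maximal domain on which $\ip{S(\cdot)}{\nu}$ is bounded.
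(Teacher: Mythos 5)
Your argument is correct and is essentially the paper's own proof: the appendix likewise reformulates the $\sharp$-operation as the pre-adjoint of an analytic generator and then runs exactly your annihilator/bipolar computation (the identities $\mc G'(-T^*)=\mc G(T)^\perp$ and $\overline{\mc G(T)}={}^\perp(\mc G(T)^\perp)$ are your bipolar step), the only cosmetic difference being that the paper first strips off the unitary antipode and applies the duality to $\tau_{-i/2}$, whose $\sigma$-weak closedness is what Cior\u{a}nescu--Zsid\'o directly supply, whereas you apply it to $S$ itself. One small wording point: membership of $\nu$ in $D(S_*)$ requires the functional $z\mapsto\ip{S(z)}{\nu}$ to be implemented by an element of the predual $L^1(\G)$, not merely to be norm-bounded on $D(S)$, but this does not affect your argument since the polar is computed inside $L^1(\G)\times L^1(\G)$ and therefore automatically produces such an implementing element.
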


Firstly, note that as $S=R\tau_{-i/2}$, we have that $\omega\in L^1_\sharp(\G)$
if and only if there is $\omega'\in L^1(\G)$ with
\[ \ip{x}{\omega'} = \ip{\tau_{-i/2}(x)}{\omega^*} \qquad
(x\in D(\tau_{-i/2})). \]
Indeed, if this holds, then for $x\in D(S)$, we have that $\ip{x}{\omega'\circ R}
= \ip{\tau_{-i/2}(R(x))}{\omega^*} = \overline{ \ip{S(x)^*}{\omega} }$, so
$\omega^\sharp = \omega'\circ R$.  The converse follows similarly.

Set $D = L^1_\sharp(\G)^* = \{ \omega^* : \omega\in L^1_\sharp(\G) \}$
and for $\omega\in D$ let $\omega' = (\omega^*)^\sharp$.
It follows that the proposition above is equivalent to the following.

\begin{proposition}\label{prop:apone}
Let $\G$ be a locally compact quantum group, and let $x,y\in L^\infty(\G)$
be such that $\ip{x}{\omega'} = \ip{y}{\omega}$ for all
$\omega\in D$.  Then $x\in D(\tau_{-i/2})$ with $y = \tau_{-i/2}(x)$.
\end{proposition}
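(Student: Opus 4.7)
The plan is to read the hypothesis as the preadjoint pairing for the analytic generator $\tau_{-i/2}$, tested on the subspace $D = L^1_\sharp(\G)^*$, and to invoke the duality between analytic generators of a $\sigma$-weakly continuous one-parameter group on $L^\infty(\G)$ and those of the induced $C_0$-group on the predual $L^1(\G)$. This duality, together with a core property for $D$, is the abstract content of the appendix.

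Let $T_* = \tau_{-i/2,*}$ denote the analytic generator at $-i/2$ of the norm-continuous group $(\tau_{t,*})$, a norm-closed, densely defined operator on $L^1(\G)$. The defining relation $\ip{z}{\omega'} = \ip{\tau_{-i/2}(z)}{\omega}$ for $z \in D(\tau_{-i/2})$ is exactly the statement that $D \subseteq D(T_*)$ with $T_*(\omega) = \omega'$, so the hypothesis reads $\ip{y}{\omega} = \ip{x}{T_*(\omega)}$ for all $\omega \in D$. Classical duality for analytic generators gives $(T_*)^* = \tau_{-i/2}$. Hence to conclude $x \in D(\tau_{-i/2})$ with $\tau_{-i/2}(x) = y$, it suffices to show that $D$ is a core for $T_*$.

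Since $D$ is norm-dense in $L^1(\G)$ (being the isometric $*$-image of the dense subalgebra $L^1_\sharp(\G)$), the core property follows from general $C_0$-group theory once we verify $\tau_{t,*}(D) \subseteq D$ for each real $t$. This invariance is a consequence of the commutation $\tau_t S = S\tau_t$ on $D(S)$, itself coming from $S = R\tau_{-i/2}$ together with $\tau_t R = R\tau_t$ (a standard property of the scaling group and unitary antipode) and $\tau_t\tau_s = \tau_{s+t}$. Dualising, and using that $\tau_t$ is a $*$-automorphism of $L^\infty(\G)$, one finds that $\tau_{t,*}$ commutes with $*$ on $L^1(\G)$ and with $\sharp$ on $L^1_\sharp(\G)$, whence $\tau_{t,*}(D) \subseteq D$. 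The main obstacle is precisely the abstract duality $(T_*)^* = \tau_{-i/2}$ and the core-via-invariance principle; these are the substance of the appendix, and once $\tau_{t,*}$-invariance of $D$ is in hand the proposition follows immediately.
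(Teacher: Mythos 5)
Your argument is correct, but it is not the route the paper takes, and the difference sits exactly at the point the appendix was designed to address. The paper's proof uses the abstract graph duality set up just before the proposition: since $\tau_{-i/2}$ is $\sigma$-weakly closed and densely defined (by Cioranescu--Zsid\'o), it has a norm-closed, densely defined pre-adjoint $\tau_{*,-i/2}$ on $L^1(\G)$, and --- this is the crux --- the remark preceding the proposition identifies $D$ as \emph{exactly} the domain $D(\tau_{*,-i/2})$, with $\omega'=\tau_{*,-i/2}(\omega)$. The hypothesis of the proposition is then verbatim the definition of $x\in D\bigl((\tau_{*,-i/2})^*\bigr)$ with $(\tau_{*,-i/2})^*x=y$, and the mutual-inverse property of the two graph-adjoint constructions finishes the proof; no core argument and no smearing are needed. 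You instead take $T_*$ to be the analytic generator of the predual group, establish only the inclusion $D\subseteq D(T_*)$, and then must show that $D$ is a core for $T_*$ via density and $\tau_{*,t}$-invariance. That core criterion is true for analytic generators (the invariance of $D$ does follow from $\tau_tR=R\tau_t$ and $\tau_t$ being a $*$-automorphism, and $(T_*)^*=\tau_{-i/2}$ is the classical Cioranescu--Zsid\'o duality), but it is \emph{not} the elementary core theorem for infinitesimal generators of $C_0$-groups: its proof for analytic generators proceeds by Gaussian regularisation, i.e.\ precisely the ``smearing'' argument that the paper explicitly says it wishes to avoid. So your proof is sound, but it trades the paper's one-line identification $D=D(\tau_{*,-i/2})$ for an extra core theorem whose proof reintroduces smearing; you should cite that result (it appears in Kustermans' treatment of one-parameter groups) rather than attributing it to ``general $C_0$-group theory''.
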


Perhaps the ``standard'' proof of the proposition would be to ``smear''
$x$ and $y$ by the one-parameter group $(\tau_t)$; compare
\cite[Proposition~4.22]{kus1} or the proof of \cite[Proposition~5.26]{kv},
for example.  Instead, we wish to indicate how to prove the result by
using closed operators.

Given a topological vector space $E$ and a subspace $D(T)\subseteq E$, a linear
map $T:D(T)\rightarrow E$ is \emph{closed and densely defined} if $D(T)$ is
dense in $E$, and the graph $\mc G(T) = \{ (x,T(x)) : x\in D(T) \}$ is closed
in $E\times E$.  Define also $\mc G'(T) = \{ (T(x),x) : x\in E \}$, which
is closed if and only if $\mc G(T)$ is.
Let $E$ be a Banach space-- we shall consider the case
when $E$ has the norm topology, and when $E^*$ has the weak$^*$-topology.

Let $T:D(T)\rightarrow E$ be a linear map.  Set $D(T^*) =
\{ \mu\in E^* : \exists\lambda\in E^*, \ \ip{\lambda}{x} = \ip{\mu}{T(x)}
\ (x\in D(T)) \}$, and with a slight abuse of notation, set
$\mc G(T^*)$ to be the collection of such $(\mu,\lambda)$.  Notice that:
\begin{itemize}
\item $\mc G'(-T^*) = \mc G(T)^\perp$, and so $\mc G(T^*)$ is weak$^*$-closed;
\item $D(T)$ is dense if and only if $(\lambda,0)\in \mc G(T)^\perp
\implies \lambda=0$, which is equivalent to $\mc G(T^*)$ being the graph
of an operator (that is, the choice of $\lambda$ being unique);
\item $\overline{ \mc G(T) } = {}^\perp( \mc G(T)^\perp )
= {}^\perp \mc G'(-T^*)$ and so $D(T^*)$ is weak$^*$-dense, because
$\mc G(T)$ is the graph of an operator.
\end{itemize}
It follows that $T^*$ is a closed, densely defined operator, for the
weak$^*$-topology.  We can similarly reverse the argument, and start
with $T^*$, and define a closed, densely defined operator $T$ by setting
$D(T) = \{ x\in E : \exists y\in E, \ \ip{\mu}{y} = \ip{T^*(\mu)}{x}
\ (\mu\in D(T^*)) \}$ and $T(x) = y$.  Furthermore, these two constructions
are mutual inverses, so if we start with $T^*$, form $T$, and then form
the adjoint of $T$, we recover $T^*$.  We are not aware of an entirely
satisfactory reference for this construction, but see \cite[Section~5.5,
Chapter~III]{kato}.

\begin{proof}[Proof of Proposition~\ref{prop:apone}]
We apply the preceding discussion to $\tau_{-i/2}$ acting on $L^\infty(\G)$.
As $\tau_{-i/2}$ is the analytic generator of the
$\sigma$-weakly continuous group $(\tau_t)$, it follows from \cite{cz}
that $\tau_{-i/2}$ is $\sigma$-weakly closed and densely defined.
Let $\tau_{*,-i/2}$ be the pre-adjoint, which from the above is hence
a (norm) closed, densely defined operator on $L^1(\G)$.  It follows
that $D = D(\tau_{*,-i/2})$ and that $\omega' = \tau_{*,-i/2}(\omega)$
for $\omega\in D$.  The hypothesis of the proposition now simply states
that $x\in D((\tau_{*,-i/2})^*)$ and $y=(\tau_{*,-i/2})^*(x)$.  The claim
now follows from the fact that $(\tau_{*,-i/2})^* = \tau_{i/2}$.
\end{proof}

We remark that, for example, this argument gives a very easy proof that
$L^1_\sharp(\G)$ is dense in $L^1(\G)$, without the need for a ``smearing''
argument (compare with the discussion before Proposition~3.1 in \cite{kus}).

\pagebreak

\small

\noindent
Michael Brannan\\
Department of Mathematics and Statistics,\\
Queen's University\\
Jeffery Hall, University Avenue, Kingston, \\
Ontario, Canada K7L 3N6.\\
Email: \texttt{mbrannan@mast.queensu.ca}

\medskip

\noindent
Matthew Daws\\
School of Mathematics,\\
University of Leeds,\\
LEEDS LS2 9JT\\
United Kingdom\\
Email: \texttt{matt.daws@cantab.net}

\medskip

\noindent
Ebrahim Samei\\
Department of Mathematics and Statistics\\
McLean Hall\\
University of Saskatchewan\\
106 Wiggins Road, Saskatoon,\\
Saskatchewan, Canada S7N 5E6\\
Email: \texttt{samei@math.usask.ca}

\end{document}